\newcommand{\todo}[1]{\vspace{5mm}\par\noindent
\framebox{\begin{minipage}[c]{0.95 \textwidth} \tt #1
\end{minipage}} \vspace{5mm} \par}
\newtheorem{lemma}{Lemma}[section]
\newtheorem{thm}[lemma]{Theorem}
\newtheorem{prop}[lemma]{Proposition}
\newtheorem{cor}[lemma]{Corollary}
\newtheorem*{prop*}{Proposition}
\newtheorem{prop_intro}{Proposition}
\newtheorem{thm_intro}[prop_intro]{Theorem}
\newtheorem{cor_intro}[prop_intro]{Corollary}
\theoremstyle{definition}
\newtheorem{defn}[lemma]{Definition}
\newtheorem{rem}[lemma]{Remark}
\theoremstyle{definition}
\definecolor{darkgreen}{cmyk}{1,0,1,.2}
 \newcommand\norm{\bBigg@{0.8}}
 \newcommand{\indnorm}[2][flex]{\csname #1l\endcsname\|#2%
                                 \csname #1r\endcsname\|\mathclose{}}
                                  \newcommand{\indnorml}[4][flex]{\csname #1l\endcsname\|#2%
                                 \csname #1r\endcsname\|_{#3}^{#4}\mathclose{}}
\newcommand{\sv}[2][flex]{\indnorm[#1]{#2}}
\newcommand{\isv}[2][norm]{\indnorml[#1]{#2}{\mathcal{I}}{}}
\DeclareMathOperator{\inte}{int}
\DeclareMathOperator{\vol}{vol}
\DeclareMathOperator{\str}{str}
\DeclareMathOperator{\algvol}{algvol}
\newcommand{\calC} {\ensuremath {\mathcal{C}}}
\newcommand{\calS} {\ensuremath {\mathcal{S}}}
\newcommand{\calM} {\ensuremath {\mathcal{M}}}
\newcommand{\calH} {\ensuremath {\mathcal{H}}}
\newcommand{\calU}{\ensuremath {\mathcal{U}}}
\newcommand{\strtil}{\widetilde{\str}}
\newcommand{\R} {\ensuremath {\mathbb{R}}}
\newcommand{\calR} {\ensuremath {\mathcal{R}}}
\newcommand{\bb}{\partial}
\newcommand{\Isom}{\ensuremath{{\rm Isom}}}
\newcommand{\G}{\ensuremath {\Gamma}}
\newcommand{\matH} {\ensuremath {\mathbb{H}}}
\begin{document}

\title[Ideal simplicial volume of manifolds with boundary]{Ideal simplicial volume \\ of manifolds with boundary}

\author[]{R. Frigerio}
\address{Dipartimento di Matematica, Universit\`a di Pisa, Largo B. Pontecorvo 5, 56127 Pisa, Italy}
\email{roberto.frigerio@unipi.it}

\author[]{M. Moraschini}
\address{Dipartimento di Matematica, Universit\`a di Pisa, Largo B. Pontecorvo 5, 56127 Pisa, Italy}
\email{moraschini@mail.dm.unipi.it}

\thanks{}

\keywords{bounded cohomology; amenable groups; hyperbolic manifolds; hyperideal simplex; truncated tetrahedron. }
\subjclass{57N65, 55N10 (primary); 53C23, 57N16, 57M50, 20J06 (secondary).}

\begin{abstract}
We define the ideal simplicial volume for compact manifolds with boundary. 
Roughly speaking, the ideal simplicial volume of a manifold $M$
measures the minimal size of possibly ideal triangulations of $M$ ``with real coefficients'', thus providing a variation of the ordinary simplicial volume defined by Gromov in 1982, the main difference being
that ideal simplices are now allowed 
to appear in representatives of the fundamental class.

We show that the ideal simplicial volume is bounded above by the ordinary simplicial volume, and that it vanishes if and only if the ordinary simplicial volume does. We show that, for manifolds with amenable boundary, 
the ideal simplicial volume coincides with the classical one, whereas for hyperbolic manifolds with geodesic boundary it can be strictly smaller. We compute the ideal simplicial volume of an infinite family of hyperbolic $3$-manifolds
with geodesic boundary, for which the exact value of the classical simplicial volume is not known, and we exhibit examples where the ideal simplicial volume provides sharper
bounds on mapping degrees than the  classical simplicial volume. 

\end{abstract}

\maketitle

\section*{Introduction}
The simplicial volume  
 is an invariant of manifolds introduced by Gromov in his seminal paper~\cite{Grom82}. If $M$ is a connected, compact, oriented manifold with
(possibly empty) boundary, then the simplicial volume of $M$ is the 
infimum of the sums of the absolute values of the coefficients over all singular chains representing the real fundamental cycle of $M$ (see Subsection~\ref{seminorms:sub}).
We will denote the simplicial volume of $M$ by the symbol $\sv{M}$, both  when $M$ is closed and when $M$ has non-empty boundary. 

If $\tau$ is a triangulation of $M$, then a suitable algebraic sum of the simplices of $\tau$ provides a fundamental cycle for $M$. Hence
the simplicial volume is bounded above by the minimal number of simplices in any triangulation of $M$, and it can be thought as the minimal size of triangulations of $M$ ``with real coefficients''
(even if this analogy is very loose: fundamental cycles of  manifolds can contain simplices which are very far from being embedded, for example). 

When dealing with manifolds with boundary, it is often useful to work with \emph{ideal} triangulations rather than with traditional ones (we refer the reader to Section~\ref{ideal:sec} for the
precise definition of ideal triangulation). Indeed, ideal triangulations are usually more manageable than classical ones, and in many cases they are much more economical: for example, 
the smallest cusped hyperbolic $3$-manifold (which was constructed by Gieseking in 1912, and shown to have the smallest volume among non-compact hyperbolic $3$-manifolds by Adams~\cite{Adams}) can be triangulated using only one ideal simplex,
and the figure-eight knot complement, which doubly covers the Gieseking manifold, admits an ideal triangulation with $2$ ideal tetrahedra. 

In this paper we introduce and study the notion of \emph{ideal simplicial volume}. To this aim, we first introduce a homology theory for manifolds with boundary, called \emph{marked
homology}, 
in which {ideal} singular simplices are allowed. We then show that marked homology is isomorphic to the singular homology of the manifold relative to its boundary. This
allows us
to define a fundamental class in the marked context, which will be called \emph{ideal fundamental class}. We then define
the ideal simplicial volume $\isv{M}$ of $M$ as the infimum of the $\ell^1$-norms of the representatives of the ideal fundamental class. We refer the reader
to Section~\ref{ideal:sec} for the precise definition. 
A crucial fact is that the canonical isomorphism between marked homology and relative homology is not isometric: as discussed in Remark~\ref{no:naif},
a seemingly reasonable definition of the ideal simplicial volume in the context of relative homology would not lead to an interesting theory.

\subsection*{Fundamental properties of the ideal simplicial volume}
For every compact manifold with boundary $M$, let $c(M)$ denote the \emph{complexity} of $M$, i.e.~the minimal number of top-dimensional simplices in any ideal triangulation
of $M$.
Just as the simplicial volume of $M$ is bounded from above by the number of top-dimensional simplices in a triangulation of $M$ (see e.g.~\cite[Proposition 1.1]{FFM}), 
the ideal complexity provides an upper bound for the ideal simplicial volume:

\begin{thm_intro}\label{complexity}
Let $M$ be a 
compact manifold with boundary. Then
$$
\isv{M}\leq c(M)\ .
$$
\end{thm_intro}

We also show that the ideal simplicial volume may be exploited to bound the degree of maps between manifolds:

\begin{thm_intro}\label{bound:degree}
 Let $f\colon (M,\bb M)\to (N,\bb N)$ be a map of pairs between compact, connected and oriented manifolds of the same dimension. Then
 $$
 \isv{M}\geq |\deg(f)| \cdot \isv{N}\ .
 $$
\end{thm_intro}

In particular, the ideal simplicial volume is a homotopy invariant of manifolds with boundary (where homotopies are understood to be homotopies of pairs).

The ideal simplicial volume  vanishes if and only if the ordinary simplicial volume does:

\begin{thm_intro}\label{main:inequalities}
There exists a constant $K_n$ only depending on $n\in\mathbb{N}$ such that, for 
 every $n$-dimensional compact manifold $M$, the following inequalities hold: 
 $$
 \isv{M}\leq \sv{M}\leq K_n\cdot \isv{M} .
 $$
 In particular, $\isv{M}=0$ if and only if $\sv{M}=0$.
\end{thm_intro}

\subsection*{Manifolds with amenable boundary}

As Gromov himself pointed out in his seminal paper~\cite{Grom82}, in order to compute the simplicial volume it is often useful to exploit the dual theory of bounded cohomology. 
One of the peculiar features of (singular) bounded cohomology is that it vanishes on spaces with amenable fundamental group. Via some elementary duality results,
this implies in turn that amenable spaces are somewhat invisible when considering their simplicial volume: for example, 
closed manifolds with amenable fundamental group have vanishing simplicial volume, and (under some mild additional hypothesis)
the simplicial volume of manifolds with boundary is additive with respect to gluings along boundary components with amenable fundamental group.
In Section~\ref{amenable:sec} we introduce the dual theory to marked homology  and, building on results from~\cite{BBFIPP,miolibro}, we exploit duality to deduce the following:

\begin{thm_intro}\label{amenable:thm}
Let $M$ be an $n$-dimensional compact manifold, and suppose that the fundamental group of every boundary component of $M$ is amenable. Then
$$
\isv{M}=\sv{M}\ .
$$
\end{thm_intro}

It is proved in~\cite{Lothesis, KK} that, under the assumptions of Theorem~\ref{amenable:thm}, the simplicial volume $\|M\|$ of $M$ coincides with the simplicial volume of the open
manifold $\inte(M)=M\setminus \partial M$ (which is defined in terms of the locally finite homology of $M\setminus \partial M$~\cite{Grom82}), as well as with the Lipschitz simplicial volume
of $\inte(M)$ (see~\cite{Grom82, Loh-Sauer} for the definition). Therefore, for manifolds whose boundary components have amenable fundamental group, 
all these invariants also coincide with the ideal simplicial volume $\isv{M}$ of $M$.

Let $M$ be a complete finite-volume hyperbolic $n$-manifold. As usual, we will denote by $M$ also the natural compactification of $M$, which is a compact manifold
whose boundary components admit a flat Riemannian structure. A celebrated result of Gromov and Thurston shows that
the simplicial volume of $M$ is equal to the ratio $\vol(M)/v_n$, where $\vol(M)$ is the Riemannian volume of $M$ and $v_n$ is the volume of a regular
ideal geodesic simplex in hyperbolic $n$-space $\matH^n$ (all such simplices are isometric to each other). Since flat manifolds
have virtually abelian (hence, amenable) fundamental group, Theorem~\ref{amenable:thm} implies the following:

\begin{cor_intro}\label{hyperbolic:cor}
Let $M$ be (the natural compactification of) a complete finite-volume hyperbolic $n$-manifold. Then
$$
\isv{M}=\sv{M}=\frac{\vol(M)}{v_n}\ .
$$
\end{cor_intro}
In fact, Theorem \ref{amenable:thm} applies to a much bigger class of complete finite-volume manifolds. Let $M$ be an open complete manifold with 
finite volume and pinched negative curvature. As above we still denote by $M$ its natural compactification. 
It is well known that the fundamental group of each boundary component of $M$ is virtually nilpotent, hence amenable
(see e.g.~\cite{BGS85} or \cite[Ex.~19.1]{Bele16}).
Thus
Theorem~\ref{amenable:thm} implies the following:
\begin{cor_intro}\label{neg:pinch:cor}
Let $M$ be (the natural compactification of) an open complete finite-volume negatively pinched $n$-manifold. Then
$$
\isv{M} = \sv{M}.
$$
\end{cor_intro}

\subsection*{Hyperbolic manifolds with geodesic boundary}
The computation of the simplicial volume of manifolds whose boundary components have \emph{non}-amenable fundamental groups is a very challenging task.
Indeed, the only exact values of the simplicial volume of such manifolds are known for $3$-dimensional handlebodies (and, more in general, for manifolds obtained by attaching
$1$-handles to Seifert manifolds)
and for the product of a surface with the closed interval~\cite{BFP}. In particular, the exact value of the ordinary simplicial volume is not known for any hyperbolic $3$-manifold
with geodesic boundary (and the only available estimates seem quite far from being sharp, see~\cite{BFP2}). On the contrary, 
in Section~\ref{hyperbolic:sec} we compute the exact value of the \emph{ideal}
simplicial volume for an infinite family of hyperbolic $3$-manifolds with geodesic boundary (see Theorem~\ref{geodesic:boundary:thm}
 below). 

In the closed case, 
Gromov's and Thurston's strategy to obtain lower bounds on the classical simplicial volume of hyperbolic manifolds is based on two facts: fundamental cycles may be represented
by linear combinations of geodesic simplices, and the volume of geodesic simplices in hyperbolic space is uniformly bounded.
When dealing with manifolds with geodesic boundary, the most natural building blocks turn out to be the so-called (partially) truncated simplices.
By mimicking Gromov's and Thurston's strategy, 
here we exploit upper bounds on the volume
of truncated simplices to obtain lower bounds on the ideal simplicial volume of hyperbolic manifolds with geodesic boundary.

If $M$ is a compact hyperbolic manifold with geodesic boundary, then the \emph{smallest return length} $\ell(M)$ of $M$ is the length of the shortest
path with both endpoints on $\partial M$ which intersects $\partial M$ orthogonally at each of its endpoints. Equivalently, 
it is the smallest distance between distinct boundary components of the universal covering of $M$.

\begin{thm_intro}\label{lowerbound:thm}
Let $M$ be a compact $n$-dimensional hyperbolic manifold with geodesic boundary. Then
$$
\isv{M}\geq \frac{\vol(M)}{V^n_{\ell(M)}}\ ,
$$
where 
$V^n_\ell$ is the supremum of the volumes of $n$-dimensional fully truncated simplices whose edge lengths are not smaller than $\ell$
(see Definition~\ref{defvl}).
\end{thm_intro}

It is well known (see for example \cite{Ush}) that, in dimension 3, the supremum of the volumes of all truncated tetrahedra is given by the volume $v_8\approx 3.664$ 
of the regular ideal octahedron, so Theorem~\ref{lowerbound:thm} implies the following:

\begin{cor_intro}\label{cor:intro:lower}
Let $M$ be a compact hyperbolic $3$-manifold with geodesic boundary. Then
$$
\isv{M}\geq \frac{\vol(M)}{v_8}\ .
$$
\end{cor_intro}

It has been recently proved that, if $\ell$ is sufficiently small, then the constant $V^3_\ell$ coincides with the volume of the regular truncated tetrahedron 
of edge length $\ell$~\cite{FrMo2}.
Together with Theorem~\ref{lowerbound:thm}, this fact allows us 
to compute the exact value of the ideal simplicial volume 
of an infinite family of hyperbolic $3$-manifolds with geodesic boundary.

For every $g\geq 2$ let $\overline\calM_g$ be the set of hyperbolic $3$-manifolds $M$
with connected geodesic boundary such that $\chi(\bb M)=2-2g$ (so $\bb M$, if orientable, is the closed
orientable surface of genus $g$).
Recall that for every $3$-manifold with boundary $M$
the equality $\chi(\bb M)=2\chi (M)$ holds, and in particular $\chi(\bb M)$ is even.
Therefore, the union $\bigcup_{g\geq 2} \overline\calM_g$ coincides with the set
of hyperbolic $3$-manifolds with connected geodesic boundary.

For every $g\geq 2$ we denote by $\calM_g$ the set of 
3-manifolds $M$ with boundary that admit an ideal triangulation by $g$ tetrahedra
and have Euler characteristic $\chi(M)=1-g$.
Every element of $\calM_g$ has connected boundary 
and supports
a hyperbolic structure with geodesic boundary (which is unique
by Mostow rigidity), hence $\calM_g\subseteq \overline\calM_g$
(see~\cite{FriMaPe}).
Furthermore, Miyamoto proved in~\cite{M}
that elements of $\calM_g$ are exactly the ones having the smallest volume
among the elements of $\overline\calM_g$. In particular, $\calM_g$ is nonempty
for every $g\geq 2$.
The eight elements of $\calM_2$ are exactly the smallest hyperbolic manifolds
with nonempty geodesic boundary~\cite{KM,M}. Some estimates on the ordinary simplicial volume of elements
in $\calM_g$ can be found in \cite{BFP2}. Here we prove the following:

\begin{thm_intro}\label{geodesic:boundary:thm}
Let $M\in\overline{\calM}_g$. Then
$$
\isv{M}\geq g
$$
and
$$
\isv{M}=g
$$
if and only if $M\in\calM_g$. 
\end{thm_intro}

\begin{cor_intro}\label{minimal:isv}
The hyperbolic $3$-manifolds with geodesic boundary having the smallest ideal simplicial volume are exactly the elements of $\calM_2$.
\end{cor_intro}

As a consequence of the previous corollary, among hyperbolic $3$-manifolds with geodesic boundary,
the minimum of the Riemannian volume is attained exactly at those manifolds which  also realize the minimum of the ideal simplicial volume.
The same result with ideal simplicial volume replaced by ordinary simplicial volume was conjectured in~\cite{BFP}.

A direct consequence of Theorems~\ref{bound:degree} and~\ref{geodesic:boundary:thm} is the following:

\begin{cor_intro}\label{degree:cor}
Take elements $M\in\calM_g$ and $M'\in\calM_{g'}$, where $g\geq g'$, and let
$$
f\colon (M,\partial M)\to (M',\partial M')
$$
be a map of pairs. Then
$$
\deg(f)\leq \frac{g}{g'}\ .
$$
\end{cor_intro}

As discussed in Subsection~\ref{degrees:sub}, in some cases
 the bound provided by Corollary~\ref{degree:cor} is sharp. Moreover, 
 it is 
 strictly sharper than the bounds 
one can obtain by exploiting the ordinary simplicial volume to study the restriction of $f$ to $\partial M$ or
the extension of $f$ to the double of $M$.

\subsection*{Plan of the paper}

In Section \ref{marked:sec} we define  marked spaces and  marked homology. Moreover, we prove some fundamental results about marked homology that are needed for the definition of the ideal simplicial volume. In Section \ref{ideal:sec} we define the ideal simplicial volume and we prove Theorem \ref{complexity} (in Subsection~\ref{isv:vs:c}), 
Theorem \ref{bound:degree} (in Subsection~\ref{bd:deg:sec}) and Theorem~\ref{main:inequalities} (in Subsection~\ref{class:sub}). 
We also introduce marked bounded cohomology, and establish an elementary but fundamental duality result which will be exploited
in the proofs of Theorems~\ref{amenable:thm} and~\ref{lowerbound:thm}.

In Section~\ref{universal:subsec} we introduce the universal covering of marked spaces, while Section \ref{amenable:sec} is devoted to the proof of Theorem \ref{amenable:thm}. Finally, in Section \ref{hyperbolic:sec} we 
focus on hyperbolic manifolds, 
and we prove Theorem \ref{lowerbound:thm}, Theorem \ref{geodesic:boundary:thm} and Corollary \ref{degree:cor}.

\section{Marked homology of marked spaces}\label{marked:sec}
Before defining the ideal simplicial volume of manifolds with boundary we need to introduce and develop the theory of marked
homology for marked spaces. If $M$ is a manifold with boundary, one can consider the quotient space 
$X$ obtained by separately collapsing the connected components of $\partial M$. Such a space $X$ provides the motivating
example for our theory, and the ideal simplicial volume of $M$ will be defined in Section~\ref{ideal:sec} just as the $\ell^1$-seminorm
of the real fundamental class in the marked homology of $X$. This section is mainly devoted to collecting the fundamental properties of marked
homology, and to preparing the ground for the precise definition of the fundamental class in the context of marked homology.

We begin by introducing a  slight generalization of the notion of topological cone.

\begin{defn}\label{coarse:cone}
Let $B$ be a topological space and let $b$ be a point of $B$. We say that $B$ is a \emph{quasicone} with  apex $b$ if the following condition holds: there exists a homotopy
$$
H\colon B\times [0,1]\to B
$$
between the identity and the constant map at $b$ with the following additional properties: 
$H(b,t)=b$ for every $t\in [0,1]$ (i.e.~the homotopy is relative to $\{b\}$), and
for every $x\in B\setminus\{b\}$ the path $H(x,\cdot)\colon [0,1)\to B$ does not pass through $b$.
In other words, we ask that the homotopy $H$ is
such that $H(x,0)=x$ for every $x\in B$, and 
$$H^{-1}(\{b\})=\left(\{b\}\times [0,1]\right)\cup \left(B\times \{1\}\right)\ .$$
\end{defn}

A topological cone is obviously a quasicone, and 
in order to define the ideal simplicial volume of compact manifolds with boundary it would be sufficient to deal with usual cones. Nevertheless, when constructing coverings of marked
spaces it will be useful to work in a slightly more general context.

The following definition singles out the fundamental objects we will be dealing with.

\begin{defn}\label{marked:defn}
A \emph{marked space} $(X,B)$ is a topological pair satisfying the following properties:
\begin{enumerate}
\item
$B$ is closed in $X$.
\item 
For every $b\in B$ there exists a closed neighbourhood
$F_b$ of $b$ in $X$ such that $F_b$ is a quasicone with apex $b$.
Moreover, the $F_b$, $b\in B$, may be chosen to be pairwise disjoint.
\end{enumerate} 
As a consequence of the definition, the subset $B$ is discrete. Moreover, if
$$
F_B=\bigcup_{b\in B} F_b\ ,
$$
then the $F_b$, $b\in B$, are exactly the path connected components of $F_B$.
\end{defn}

\begin{defn}
A map $f\colon (X,B)\to (X',B')$ between marked spaces is \emph{admissible} if and only if  it is continuous and such that $f^{-1}(B')=B$.
\end{defn}

It is immediate to check that there exists a well-defined category having marked spaces as objects and admissible maps as morphisms. 
The notion of homotopic maps admits an obvious admissible version:

\begin{defn}\label{admissible:hom:def}
Let $f,g\colon (X,B)\to (X',B')$ be admissible maps between marked spaces. An admissible homotopy between $f$ and $g$ is an ordinary homotopy
$H\colon X\times [0,1]\to X'$ between $f$ and $g$ such that the map $H(\cdot, t)\colon X\to X'$ is admissible for every $t\in [0,1]$ or, equivalently,
such that $H^{-1}(B')=B\times [0,1]$. Observe that, since $B'$ is discrete, for every $b\in B$ the 
map $t\mapsto H(t,b)$ is constant:
in particular,
$f(b)=g(b)$ for every $b\in B$.
\end{defn}

\label{Ho nuovamente ridato la definizione di spazio marcato senza alcuna ipotesi di compattezza. Speriamo bene.}

\subsection{The marked space associated to a manifold with boundary}\label{associated:marked}
Let $(M, \partial M)$ be an $n$-manifold with boundary.
We associate to $(M,\partial M)$ the marked space $(X,B)$ which is defined as follows:
$X$ is the topological quotient obtained from $M$ by separately collapsing every connected component of $\partial M$, while $B\subseteq X$ is the subset of $X$ given by
the classes associated to the components of $\partial M$. Using that the boundary of $M$ admits a collar in $M$, 
it is immediate to check that $(X,B)$ is indeed a marked space in the sense of Definition~\ref{marked:defn}.
We will refer to the pair $(X,B)$ as the marked space associated to $M$, and the
quotient map $p\colon (M,\partial M)\to (X,B)$ will be called the \emph{natural projection}.

\subsection{Marked homology}
As explained in the introduction, the ideal simplicial volume measures the $\ell^1$-seminorm of the ideal fundamental class, which in turn provides a representative of
the fundamental class possibly containing (partially) ideal singular simplices. The following definition gives a precise meaning to the notion of partially ideal singular simplices.

\begin{defn}
Let $(X,B)$ be a marked space. A singular simplex $\sigma\colon \Delta^n\to X$ is \emph{admissible} if $\sigma^{-1}(B)$ is a 
subcomplex of $\Delta^n$, i.e.~a
(possibly empty) union of (not necessarily proper) 
faces of $\Delta^n$. For example, every constant singular simplex is admissible, and any bijective parametrization $\sigma\colon \Delta^n\to X$
of a top-dimensional simplex in an ideal triangulation
of a manifold  with associated marked space $(X,B)$ is admissible, since $\sigma^{-1}(B)$ is equal to the set of vertices of $\Delta^n$.
\end{defn}

Let now $\calR$ be a ring with unity.
It is immediate to check that the restriction of an admissible singular simplex to any of its faces is still admissible (where we understand that each face of 
$\Delta^n$ is identified with $\Delta^{n-1}$ via the unique affine isomorphism which preserves the order of the vertices). This readily implies
that admissible simplices define a subcomplex $\widehat{C}_*^{\calM}(X,B;\calR)$ of the usual singular complex $C_*(X;\calR)$. Of course,
the complex  $\widehat{C}_*^{\calM}(X,B;\calR)$ contains the subcomplex $C_*(B;\calR)$ (which, given that $B$ is discrete, in each degree $n$ only consists of the free $\calR$-module
over the constant $n$-simplices with values in $B$).

We are now ready to introduce the definition of the chain complex that computes the marked homology of marked spaces.

\begin{defn}
Let $(X,B)$ be a marked space. The \emph{marked chain complex} of $(X,B)$ over the coefficient ring $\mathcal{R}$ is the quotient complex
$$
C^\calM_*(X,B;\calR)=
\widehat{C}^\calM_*(X,B;\calR)\big/ C_*(B; \mathcal{R})\ ,
$$
endowed with the differential induced by the usual differential on $C_*(X;\calR)$.

The \emph{marked homology} of $(X,B)$ (with coefficients in $\calR$) is the homology of the marked chain complex $C^\calM_*(X,B;\calR)$, and will be denoted by
$H_{*}^{\mathcal{M}}(X,B; \mathcal{R})$.
\end{defn}

The composition of an admissible singular simplex with an admissible map is still an admissible singular simplex. Using this one can easily check that marked homology
indeed provides a functor on the category of marked spaces. We will see later that it is in fact a homotopy functor, i.e.~that admissibly homotopic maps induce the same morphism
on marked homology.


Let $i_*\colon C^\calM_*(X,B;\calR)\to C_*(X,B;\calR)$ be the inclusion of marked chains into ordinary relative chains. The main result of this section is the following:

\begin{thm}\label{main:basic:thm}
For every $n\in\mathbb{N}$, the inclusion $$i_n\colon C^\calM_n(X,B;\calR)\to C_n(X,B;\calR)$$ induces an isomorphism 
$$
H_n(i_n)\colon H^\calM_n(X,B;\calR)\to H_n(X,B;\calR)
$$
between the marked homology and the ordinary relative homology of the pair $(X,B)$ with coefficients in $\calR$.
\end{thm}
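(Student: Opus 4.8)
The plan is to exhibit an explicit chain homotopy inverse to $i_*$ by "pushing admissible simplices off the marking $B$'' using the quasicone structure near each point of $B$. More precisely, fix for every $b \in B$ a closed quasicone neighbourhood $F_b$ with apex $b$ and contracting homotopy $H_b \colon F_b \times [0,1] \to F_b$, with the $F_b$ pairwise disjoint. The key property is that $H_b$ retracts $F_b$ to $b$ while keeping any point of $F_b \setminus \{b\}$ away from $b$ at all times $t < 1$; equivalently $H_b^{-1}(\{b\}) = (\{b\}\times[0,1]) \cup (F_b \times \{1\})$. I would first reduce to working with a small chain complex: by a standard subdivision/excision argument, relative singular homology $H_n(X,B;\calR)$ is computed by the subcomplex of $C_*(X;\calR)$ generated by simplices that are \emph{small} with respect to the open cover $\{X \setminus B\} \cup \{\inte(F_b)\}_{b\in B}$, modulo chains supported in $B$. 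So it suffices to build, on this small complex, a chain map $r_*$ landing in $\widehat{C}^\calM_*(X,B;\calR)$ together with chain homotopies $r_* \circ i_* \simeq \mathrm{id}$ and $i_* \circ r_* \simeq \mathrm{id}$ modulo $C_*(B;\calR)$, all compatible with passing to the quotient by $C_*(B;\calR)$.

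Second, I would define the operator that makes a small simplex admissible. Given a small singular simplex $\sigma \colon \Delta^n \to X$, let $P_\sigma = \sigma^{-1}(B)$; since $\sigma$ is small, $P_\sigma$ is contained in a single $\sigma^{-1}(\inte F_b)$, and $\sigma$ maps an open neighbourhood of $P_\sigma$ into $F_b$. The set $P_\sigma$ is a closed subset of $\Delta^n$ but need not be a subcomplex. Choose a continuous function $\varphi_\sigma \colon \Delta^n \to [0,1]$ which equals $1$ exactly on $P_\sigma$ (a suitable normalized distance function works, with the caveat below), supported in the part of $\Delta^n$ mapped into $F_b$; then replace $\sigma$ by
$$
\sigma'(x) = \begin{cases} H_b(\sigma(x), \varphi_\sigma(x)) & \sigma(x) \in F_b,\\ \sigma(x) & \text{otherwise.}\end{cases}
$$
By the quasicone property, $(\sigma')^{-1}(B) = (\sigma')^{-1}(b) = \varphi_\sigma^{-1}(1) = P_\sigma$, so $\sigma'$ hits $B$ on exactly the same set as $\sigma$ — this has gained nothing yet. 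The actual mechanism to get admissibility is to combine this with barycentric subdivision adapted so that $P_\sigma$ becomes a subcomplex, OR — the cleaner route — to not insist on making $\sigma^{-1}(B)$ a subcomplex of the \emph{given} $\Delta^n$ but to first barycentrically subdivide $\sigma$ finely, observe that after subdivision each simplex either misses $\inte F_b$ entirely or is small inside $F_b$, and then for those inside $F_b$ use $H_b$ with a $\varphi$ that is $\equiv 1$ on the vertices mapping to $b$ and is the standard barycentric-coordinate-type function otherwise, so that the preimage of $b$ becomes a face. I would organize this as: $r_* = (\text{cone-collapse}) \circ \sd^N$ where $N$ depends on $\sigma$, and show the cone-collapse step is chain homotopic to the identity via the "prism'' homotopy built from the $H_b$'s, which lands in $C_*(B;\calR)$ on the relevant boundary pieces and hence descends to the quotient. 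Subdivision is already chain homotopic to the identity by the classical formula, so composing gives $i_* r_* \simeq \mathrm{id}$ on the small complex; and $r_* i_* = \mathrm{id}$ on $\widehat{C}^\calM_*$ already after subdivision, up to the same homotopies, because an admissible simplex is fixed by the cone-collapse (its $\varphi$ can be taken to be the barycentric function vanishing on $\sigma^{-1}(B)$, or one checks directly that admissible simplices are already in the image up to subdivision homotopy).

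The main obstacle — and the step I would spend the most care on — is the \textbf{naturality/compatibility of the cone-collapse with the boundary operator and with the choices $\varphi_\sigma$}. The function $\varphi_\sigma$ must be chosen simplex-by-simplex in a way that is coherent under taking faces: if $\tau$ is a face of $\sigma$, then $\varphi_\sigma$ restricted to that face must agree with $\varphi_\tau$ (otherwise $r_*$ is not a chain map). The standard fix is to make $\varphi_\sigma$ depend only on the barycentric coordinates of $\Delta^n$ relative to the subcomplex that $P_\sigma$ becomes after subdivision — i.e. $\varphi$ is a fixed universal function on $\Delta^n$ once one knows which face is collapsed — but one has to check that the combinatorics of "which vertices go to $b$'' is itself compatible with faces, which it is because $\sigma^{-1}(B)$ restricted to a face is exactly the preimage under the face. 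I would therefore structure the proof as: (1) subdivision reduction to small chains; (2) for small chains, define the collapsing map using $H_b$ and universal barycentric weight functions; (3) verify it is a well-defined chain map into $\widehat C^\calM_*$, descending to $C^\calM_*$; (4) write down the prism homotopy $\sigma \times [0,1] \to X$ assembled from $H_b$ on the relevant sub-prism, check its faces land where needed (the "side'' faces are controlled by the face-compatibility of (2), and the portions that map into $B$ die in the quotient), giving $i_* r_* \simeq \mathrm{id}$; (5) check $r_*$ is a one-sided inverse on $C^\calM_*$ up to the same homotopy; conclude $H_n(i_n)$ is an isomorphism. Everything is natural in $(X,B)$ for admissible maps and, since admissible maps commute with the $H_b$'s up to admissible homotopy, this also sets up the homotopy-invariance claimed right after the theorem.
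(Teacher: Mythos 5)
Your overall strategy (an explicit ``cone-collapse'' operator turning small simplices into admissible ones, built from the quasicone homotopies $H_b$) is genuinely different from the paper's argument, but it has a gap precisely at the step you flag as the main obstacle, and your proposed fix does not close it. The mechanism $\sigma'(x)=H_b(\sigma(x),\varphi_\sigma(x))$ gives $(\sigma')^{-1}(b)=\sigma^{-1}(b)\cup\varphi_\sigma^{-1}(1)$, so to obtain admissibility you must choose $\varphi_\sigma^{-1}(1)$ to be a subcomplex \emph{containing} the (arbitrary closed, generally non-subcomplex) set $\sigma^{-1}(b)$ --- note that barycentric subdivision makes simplices small with respect to the cover but does \emph{not} make $\sigma^{-1}(B)$ a subcomplex, so this enlargement is unavoidable. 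The smallest such choice is the subcomplex hull of $\sigma^{-1}(b)$ (or the face spanned by all vertices of cells meeting it), and this assignment is \emph{not} compatible with passing to faces: if $\sigma^{-1}(b)$ is a single interior point of $\Delta^n$, its hull is all of $\Delta^n$, so $\sigma'$ is the constant simplex at $b$ and $\partial_j(\sigma')$ is constant at $b$, whereas $(\partial_j\sigma)^{-1}(b)=\emptyset$ forces $(\partial_j\sigma)'=\partial_j\sigma$. Hence $\partial r(\sigma)-r(\partial\sigma)\neq 0$ even modulo $C_*(B;\calR)$, and $r_*$ is not a chain map. Your justification (``$\sigma^{-1}(B)$ restricted to a face is exactly the preimage under the face'') is true but irrelevant: the hull of a restriction is in general strictly smaller than the restriction of the hull, which is exactly what breaks $\partial r=r\partial$.

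The paper avoids constructing any such operator. After the same reduction to $\calU$-small chains (for $\calU=\{F_B, X\setminus B\}$), it quotients the small marked complex by $C_*^{\calU,\calM}(F_B,B;\calR)$ and the small ordinary relative complex by $C_*(F_B,B;\calR)$; the two quotients have the \emph{same} basis (simplices missing $B$ and not supported in $F_B$), because a simplex disjoint from $B$ is automatically admissible. The quotient maps are quasi-isomorphisms because $H_*^{\calM}(F_b,\{b\};\calR)=0$ (proved by applying the algebraic prism operator to the quasicone homotopy --- here no simplex-dependent choices are made, so face-compatibility is automatic) and $H_*(F_B,B;\calR)=0$. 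If you want to salvage your approach you would need a choice-free or acyclic-models-type construction of the collapse; as written, the argument does not yield a chain map.
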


This section will be mainly devoted to the proof of
Theorem~\ref{main:basic:thm}, which will be concluded in Subsection~\ref{proof:main:basic:subsec}.

\subsection{The homotopy invariance of marked homology}
Recall that an admissible homotopy between 
admissible maps
$f,g\colon (X,B)\to (X',B')$ is an ordinary homotopy
$H\colon X\times [0,1]\to X'$ between $f$ and $g$ such that $H^{-1}(B')=B\times [0,1]$.

As anticipated above, marked homology is a homotopy functor: 
\begin{thm}\label{homotopy-inv}
Let $f, g \colon (X,B) \rightarrow (X',B')$ be admissibly  homotopic admissible maps.
Then the induced homomorphisms $$H^{\mathcal{M}}_{n}(f), H^{\mathcal{M}}_{n}(g)\colon H^\calM_n(X,B;\calR)\to H^\calM_n(X',B';\calR)$$
coincide for every $n\in\mathbb{N}$.
\end{thm}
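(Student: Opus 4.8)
The standard strategy for homotopy invariance of a homology theory is to construct an explicit chain homotopy, and the plan is to imitate the classical prism operator while checking that it respects admissibility. So I would first recall the prism operator $P_n\colon C_n(X;\calR)\to C_{n+1}(X\times[0,1];\calR)$ which, on a singular simplex $\sigma\colon\Delta^n\to X$, is obtained by subdividing $\Delta^n\times[0,1]$ into $(n+1)$-simplices in the usual combinatorial way (using the two orderings of vertices $v_0,\dots,v_n$ at level $0$ and $w_0,\dots,w_n$ at level $1$) and pushing forward along $\sigma\times\mathrm{id}$. Then, given the admissible homotopy $H\colon X\times[0,1]\to X'$, define $K_n=C_{n+1}(H)\circ P_n\colon C_n(X;\calR)\to C_{n+1}(X';\calR)$. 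The classical identity $\partial K_n+K_{n-1}\partial = C_n(g)-C_n(f)$ holds at the level of ordinary singular chains.

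The key new point is to verify that $K_*$ restricts to a chain homotopy on the \emph{marked} complexes, i.e.\ that it descends to $C^\calM_*(X,B;\calR)\to C^\calM_{*+1}(X',B';\calR)$. For this I would argue in two steps. First, if $\sigma\colon\Delta^n\to X$ is admissible, so $\sigma^{-1}(B)$ is a subcomplex of $\Delta^n$, then each simplex $\tau$ appearing in $P_n(\sigma)$ is a map $\Delta^{n+1}\to X\times[0,1]$ whose composition with $H$ I must show is admissible. Composing with the projection to $X$ and then tracking which faces of $\Delta^{n+1}$ land in $B$: since $H^{-1}(B')=B\times[0,1]$, a point of $\Delta^{n+1}$ maps into $B'$ under $H\circ(\sigma\times\mathrm{id})\circ\tau$ exactly when its image under the first coordinate lies in $\sigma^{-1}(B)$; because the pieces $\tau$ of the standard prism subdivision are affine and the vertices $v_i,w_i$ of $\Delta^{n+1}$ map to points over $\sigma(v_i)$, the preimage of $B'$ is the subcomplex of $\Delta^{n+1}$ spanned by those vertices $v_i,w_i$ with $v_i\in\sigma^{-1}(B)$ — hence a subcomplex, so the pushed-forward simplex is admissible. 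Second, I must check $K_*$ sends the subcomplex $C_*(B;\calR)$ into $C_{*+1}(B';\calR)$: if $\sigma$ is a constant simplex at $b\in B$, then $H(b,\cdot)$ is constant (as noted in Definition~\ref{admissible:hom:def}, since $B'$ is discrete), so $P_n(\sigma)$ after composing with $H$ consists of constant simplices at $f(b)=g(b)\in B'$; these lie in $C_{*+1}(B';\calR)$ and are therefore zero in the marked quotient. Consequently $K_*$ passes to a well-defined operator $\overline{K}_*$ on marked chains.

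Finally, I would assemble the pieces: since $i_*\colon C^\calM_*(X,B;\calR)\to C_*(X,B;\calR)$ is injective (it is the inclusion of a subcomplex modulo a subcomplex, and one checks $\widehat{C}^\calM_*\cap C_*(B)$ maps correctly), the identity $\partial\overline{K}_n+\overline{K}_{n-1}\partial = C_n^\calM(g)-C_n^\calM(f)$ on marked chains follows from the corresponding identity on ordinary chains together with the fact that all the maps involved commute with $i_*$. Passing to homology gives $H^\calM_n(f)=H^\calM_n(g)$, as desired. The main obstacle is purely the bookkeeping in the second paragraph: one must be careful that the combinatorial prism subdivision of $\Delta^n\times[0,1]$ really does have the property that the preimage of $B'$ in each piece is a \emph{face} subcomplex and not merely a closed subset — this is where the affineness of the standard subdivision and the product structure $B\times[0,1]$ of the homotopy preimage are both essential, and it is worth isolating as a small lemma.
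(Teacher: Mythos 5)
Your proposal is correct and follows essentially the same route as the paper: the standard prism operator composed with $H$, together with the observation that $H^{-1}(B')=B\times[0,1]$ forces the preimage of $B'$ in each affine piece of the prism subdivision to be a subcomplex of $\Delta^{n+1}$. The only imprecision is your identification of that preimage as the face \emph{spanned} by the vertices lying over $\sigma^{-1}(B)$ --- in general it is $\tau^{-1}(\sigma^{-1}(B)\times[0,1])$, a union of faces determined by the whole subcomplex $\sigma^{-1}(B)$ and not by its vertex set alone --- but only the fact that it is a subcomplex is needed, and that is exactly the small lemma you flag, proved in the paper by noting that $\sigma^{-1}(B)\times I$ is a subcomplex of the triangulated prism.
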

\begin{proof}
The proof is a slight modification of the one for ordinary singular homology (see for instance \cite[Thm. 2.10]{hatcher}). 
As in the classical case, we subdivide the prism $\Delta^{n} \times I$ into $(n+1)$-dimensional simplices as follows. 
For every $i=0,\ldots,n$ we set $v_i=(e_i,0)$, $w_i=(e_i,1)$, where $e_i$ is the $i$-th vertex of $\Delta^n$. We then denote
by $\sigma_i\colon \Delta^{n+1}\to \Delta^n\times I$ the affine isomorphism sending the vertices of $\Delta^{n+1}$ to the
vertices  $v_{0}, \cdots, v_{i}, w_{i}, \cdots, w_{n}$ of $\Delta^n\times I$.

Let $H \colon X \times I \rightarrow X'$ be an admissible homotopy between $f$ and $g$.
Then the usual homotopy operator $T_n\colon C_n(X;\calR)\to C_{n+1}(X';\calR)$ is the unique $\calR$-linear map such that, for every singular simplex $\sigma\colon \Delta^n\to X$,
$$T_n(\sigma)=\sum_{i = 0}^{n} (-1)^{i} H \circ (\sigma \times \mathds{1}) \circ \sigma_i\ .$$

It is clear that $T_n$ sends $C_n(B;\calR)$ to $C_{n+1}(B';\calR)$, so in order to conclude the proof it is sufficient to show that
$T_n(\sigma)\in \widehat{C}_{n+1}^\calM (X',B;\calR)$ provided that
$\sigma \colon \Delta^{n} \rightarrow X$ is admissible. 

Let us denote by $K_i\subseteq \Delta^n\times I$ the image of the affine embedding
$\sigma_i\colon \Delta^{n+1}\to \Delta^n\times I$, and observe that $\Delta^n\times I$ admits a structure of simplicial complex whose
top-dimensional simplices are exactly the $K_i$. Let $S=\sigma^{-1}(B)\subseteq \Delta^n$. Since $\sigma$ is admissible, the subset $S$
is a subcomplex of $\Delta^n$. From the very definition of admissible homotopy it follows that
$(H\circ (\sigma\times I))^{-1}(B')=S\times I$, so the pair
$$
\left(\Delta^{n+1},\left(H \circ (\sigma \times \mathds{1}) \circ \sigma_i\right)^{-1}(B')\right) 
$$
is affinely isomorphic (via $\sigma_i$) to the pair $(K_i,(S\times I)\cap K_i)$. But $S\times I$ is a subcomplex of $\Delta^n\times I$ (with respect to
the structure of simplicial complex described above), so $(S\times I)\cap K_i$ is a subcomplex of $K_i$, and this implies that
the singular simplex $H \circ (\sigma \times \mathds{1}) \circ \sigma_i\colon \Delta^{n+1}\to X'$ is admissible. We have thus proved
that the homotopy operator $T_*$ induces a well-defined homotopy operator
$$
T'_*\colon C_*^\calM(X,B;\calR)\to C_{*+1}^\calM(X',B';\calR)\ .
$$
Now the conclusion follows from
the very same argument for ordinary singular homology (see e.g.~\cite[Thm. 2.10]{hatcher}).
\end{proof}

An admissible map $f \colon (X,B) \rightarrow (X',B')$ is an \emph{admissible homotopy equivalence} if there exists 
an admissible map $g \colon (X',B') \rightarrow (X,B)$ such that $g \circ f$ and $f \circ g$ are admissibly homotopic to the identity maps $Id_{(X,B)}$ and $Id_{(X',B')}$, respectively.
If this is the case, then $f$ restricts to a bijection between $B$ and $B'$. Moreover, Theorem~\ref{homotopy-inv} immediately implies the following:
\begin{cor}
Let $f \colon (X,B) \rightarrow (X',B')$ be an admissible homotopy equivalence. 
Then the induced map $f_* \colon H_*(X,B;\calR)\to H_*(X',B';\calR)$ is an isomorphism in every degree.
\end{cor}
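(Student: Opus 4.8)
The plan is to deduce this corollary directly from Theorem~\ref{homotopy-inv}, exactly as the classical statement (a homotopy equivalence induces isomorphisms on singular homology) follows from homotopy invariance of singular homology. First I would record the observation already stated: if $f$ is an admissible homotopy equivalence with admissible homotopy inverse $g$, then the composites $g\circ f$ and $f\circ g$ are admissibly homotopic to $Id_{(X,B)}$ and $Id_{(X',B')}$ respectively. Here one should note that composition of admissible maps is admissible (this was already observed when marked homology was shown to be a functor), so $g\circ f$ and $f\circ g$ are themselves admissible maps, and Theorem~\ref{homotopy-inv} applies to them.

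Next I would invoke functoriality of marked homology on the category of marked spaces and admissible maps: $H_*^{\calM}(-;\calR)$ is a functor, so $H_*^{\calM}(g\circ f)=H_*^{\calM}(g)\circ H_*^{\calM}(f)$ and $H_*^{\calM}(f\circ g)=H_*^{\calM}(f)\circ H_*^{\calM}(g)$, and $H_*^{\calM}(Id)=\mathrm{id}$. Combining this with Theorem~\ref{homotopy-inv}, which gives $H_*^{\calM}(g\circ f)=H_*^{\calM}(Id_{(X,B)})=\mathrm{id}$ and $H_*^{\calM}(f\circ g)=H_*^{\calM}(Id_{(X',B')})=\mathrm{id}$, we conclude that $H_n^{\calM}(f)$ and $H_n^{\calM}(g)$ are mutually inverse $\calR$-linear maps in every degree $n$, hence both are isomorphisms. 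Finally, to obtain the statement as phrased (about $H_*(X,B;\calR)\to H_*(X',B';\calR)$, i.e.\ ordinary relative homology), I would combine this with Theorem~\ref{main:basic:thm}: the inclusions $i_*$ give isomorphisms $H_*^{\calM}(X,B;\calR)\cong H_*(X,B;\calR)$ and $H_*^{\calM}(X',B';\calR)\cong H_*(X',B';\calR)$, and these are natural with respect to admissible maps (since $i_*$ is the inclusion of marked chains into ordinary relative chains, it commutes with the chain maps induced by any continuous, in particular admissible, map); therefore the square relating $H_*^{\calM}(f)$ and $f_*$ commutes, and $f_*$ is an isomorphism because $H_*^{\calM}(f)$ is.

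There is essentially no obstacle here: the only point requiring a word of care is the naturality of $i_*$ used in the last step, but this is immediate because $i_*$ is induced by an inclusion of chain complexes $C_*^{\calM}(X,B;\calR)\hookrightarrow C_*(X,B;\calR)$ compatible with the pushforward of chains along any continuous map. So the corollary is a formal consequence of Theorems~\ref{homotopy-inv} and~\ref{main:basic:thm} together with functoriality.

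\begin{proof}
Let $g\colon (X',B')\to(X,B)$ be an admissible map such that $g\circ f$ and $f\circ g$ are admissibly homotopic to $Id_{(X,B)}$ and $Id_{(X',B')}$, respectively. Since the composition of admissible maps is admissible, Theorem~\ref{homotopy-inv} applies to $g\circ f$ and $f\circ g$ and yields, for every $n\in\mathbb{N}$,
$$
H^{\calM}_n(g)\circ H^{\calM}_n(f)=H^{\calM}_n(g\circ f)=H^{\calM}_n(Id_{(X,B)})=\mathrm{id},
$$
$$
H^{\calM}_n(f)\circ H^{\calM}_n(g)=H^{\calM}_n(f\circ g)=H^{\calM}_n(Id_{(X',B')})=\mathrm{id},
$$
where we used the functoriality of marked homology. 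Hence $H^{\calM}_n(f)$ is an isomorphism with inverse $H^{\calM}_n(g)$, for every $n$. Finally, the inclusions of marked chains into ordinary relative chains are compatible with the chain maps induced by admissible maps, so the diagram
$$
\begin{CD}
H^{\calM}_n(X,B;\calR) @>{H_n(i_n)}>> H_n(X,B;\calR)\\
@V{H^{\calM}_n(f)}VV @VV{f_*}V\\
H^{\calM}_n(X',B';\calR) @>{H_n(i_n)}>> H_n(X',B';\calR)
\end{CD}
$$
commutes; since the horizontal maps are isomorphisms by Theorem~\ref{main:basic:thm} and the left vertical map is an isomorphism by what we just proved, $f_*$ is an isomorphism in every degree.
\end{proof}
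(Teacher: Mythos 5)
Your proof is correct and is exactly the argument the paper has in mind: the authors give no proof at all, simply declaring the corollary an immediate consequence of Theorem~\ref{homotopy-inv}, and your functoriality argument (together with the harmless extra step transferring the isomorphism to ordinary relative homology via the naturality of the inclusion from Theorem~\ref{main:basic:thm}, in case the statement's $H_*$ is read literally rather than as $H_*^{\calM}$) is the standard way to make that precise.
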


\subsection{Marked homology of quasicones}
An important ingredient in our proof of Theorem~\ref{main:basic:thm} is the vanishing of marked homology of quasicones.
\begin{prop}\label{PropVanishingHomologyForCones}
Let $F$
be a quasicone with apex $b\in F$. Then
$$
H_n^\calM(F,\{b\};\calR)=0
$$
for every $n\geq 0$ and every coefficient ring $\calR$.
\end{prop}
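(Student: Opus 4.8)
The plan is to mimic, in the marked setting, the classical proof that a cone has trivial reduced homology — namely, to build an explicit contracting chain homotopy using the quasicone structure $H\colon F\times[0,1]\to F$. First I would fix the homotopy $H$ given by Definition \ref{coarse:cone}, so that $H(\cdot,0)=\mathrm{id}_F$, $H(\cdot,1)\equiv b$, and $H^{-1}(\{b\})=(\{b\}\times[0,1])\cup(F\times\{1\})$. The key point is that $H$ is itself an admissible homotopy from $\mathrm{id}_{(F,\{b\})}$ to the constant map $c_b$ at $b$: indeed $H^{-1}(\{b\})=(\{b\}\times[0,1])\cup(F\times\{1\})$ and, since $\{b\}\times[0,1]$ together with $F\times\{1\}$ is exactly the preimage required in Definition \ref{admissible:hom:def} once we note $\{b\}=B$... but here $B=\{b\}$ is a single point, so admissibility of a homotopy $G\colon F\times I\to F$ means $G^{-1}(\{b\})=\{b\}\times I$, which $H$ does \emph{not} satisfy because $F\times\{1\}$ is also in the preimage. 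So the naive appeal to Theorem \ref{homotopy-inv} fails, and this is the main obstacle: the contraction to the apex is not an admissible homotopy.

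To get around this, I would instead argue directly at the chain level, constructing a contracting homotopy $P_*\colon C_*^\calM(F,\{b\};\calR)\to C_{*+1}^\calM(F,\{b\};\calR)$ by the standard cone formula and checking admissibility by hand. Concretely, for an admissible singular simplex $\sigma\colon\Delta^n\to F$ I would set $P_n(\sigma)$ to be the singular $(n+1)$-simplex obtained by precomposing $H\circ(\sigma\times\mathrm{id}_{[0,1]})\colon\Delta^n\times[0,1]\to F$ with the standard affine identification of $\Delta^{n+1}$ with the join $\Delta^n * \{b\}$ — equivalently, the prism decomposition $\{\sigma_i\}$ used in the proof of Theorem \ref{homotopy-inv}, but only keeping the single top simplex that terminates at the "all $w$'s" vertex, or more cleanly the cone $b * \sigma$ swept out by $H$. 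The standard identity $\partial P_n(\sigma) + P_{n-1}(\partial\sigma) = \sigma - (\text{constant simplex at }b)$ holds verbatim. Since constant simplices at $b$ lie in $C_*(B;\calR)$, they vanish in the marked quotient complex, so $P_*$ descends to a genuine contracting homotopy on $C_*^\calM(F,\{b\};\calR)$, yielding $H_n^\calM(F,\{b\};\calR)=0$ for all $n$.

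The one thing requiring real care — and the heart of the argument — is verifying that $P_n(\sigma)$ is again an \emph{admissible} simplex, i.e.\ that its preimage of $\{b\}$ is a subcomplex of $\Delta^{n+1}$. Writing $S=\sigma^{-1}(\{b\})\subseteq\Delta^n$, which is a subcomplex by admissibility of $\sigma$, the preimage $(H\circ(\sigma\times\mathrm{id}))^{-1}(\{b\})$ equals $(S\times[0,1])\cup(\Delta^n\times\{1\})$ by the defining property of the quasicone homotopy. Under the affine identification of $\Delta^{n+1}$ with the relevant piece of $\Delta^n\times[0,1]$ (the join of $\Delta^n$ with the apex vertex), both $S\times[0,1]$ and $\Delta^n\times\{1\}$ pull back to subcomplexes — the latter is precisely the apex vertex of the join, and $S\times[0,1]$ is the subjoin over $S$ — so their union is a subcomplex of $\Delta^{n+1}$, exactly as needed. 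This is the same "$S\times I$ is a subcomplex" observation already exploited in the proof of Theorem \ref{homotopy-inv}, now applied to the single cone simplex rather than to the full prism subdivision; once this admissibility check is in place, the rest is the routine cone-homotopy computation and I would not belabor it.
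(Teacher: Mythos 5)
Your proposal is correct and follows essentially the same route as the paper: having observed that the contraction $H$ is not an admissible homotopy, you work directly at the chain level and reduce everything to the key admissibility check that $(H\circ(\sigma\times\mathrm{id}))^{-1}(\{b\})=(S\times I)\cup(\Delta^n\times\{1\})$ pulls back to a subcomplex of $\Delta^{n+1}$, which is exactly the paper's argument. The only (cosmetic) difference is that the paper uses the full alternating prism operator $T_n(\sigma)=\sum_i(-1)^i H\circ(\sigma\times\mathds{1})\circ\sigma_i$ rather than the single cone simplex $b*\sigma$; just note that your aside ``keep only the single top prism simplex'' is not literally the cone (no affine simplex of the prism decomposition collapses $\Delta^n\times\{1\}$ to a point, and one prism simplex alone satisfies no homotopy identity), but your primary construction via the join $e_0 * \Delta^n$ is standard and the admissibility verification goes through as you describe.
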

\begin{proof}
Unfortunately, in order to prove the proposition we are not allowed to apply the homotopy invariance of marked homology
to the contracting homotopy which retracts $F$ onto $b$: indeed, such a homotopy is not admissible, and even the constant map
$F\to F$ sending every point of the cone to $b$ is not admissible. Nevertheless, the obvious algebraic contracting homotopy for singular chains on $F$
takes admissible chains to admissible chains, thus yielding the desired vanishing of marked homology for quasicones. 

Let us now give some details.
Recall that by definition of quasicone there exists a homotopy 
$$
H\colon F\times [0,1]\to F
$$
between the identity and the constant map at $b$ which satisfies 
$$H^{-1}(b) = \left(\{b\} \times I\right) \cup \left( F \times \{1\}\right)\ .$$ 

Let $\sigma_i\colon \Delta^{n+1}\to \Delta^n\times I$, $i=0,\ldots,n$, be the affine parametrizations of the top-dimensional
simplices of the decomposition of $\Delta^n\times I$ described in the proof of Theorem \ref{homotopy-inv}. 
We consider as before the usual homotopy operator $T_n\colon C_n(F;\calR)\to C_{n+1}(F;\calR)$  such that, for every singular simplex $\sigma\colon \Delta^n\to F$,
$$T_n(\sigma)=\sum_{i = 0}^{n} (-1)^{i} H \circ (\sigma \times \mathds{1}) \circ \sigma_i\ .$$

Of course $T_n$ sends $C_n(\{b\};\calR)$ to $C_{n+1}(\{b\};\calR)$, and we need to check that $T_n(\sigma)\in \widehat{C}_{n+1}^\calM (F,\{b\};\calR)$ provided that
$\sigma \colon \Delta^{n} \rightarrow F$ is admissible. 

Let  $K_i\subseteq \Delta^n\times I$ be the image of the affine embedding
$\sigma_i\colon \Delta^{n+1}\to \Delta^n\times I$, and recall that $\Delta^n\times I$ admits a structure of simplicial complex whose
top-dimensional simplices are exactly the $K_i$. Let $S=\sigma^{-1}(B)\subseteq \Delta^n$. Since $\sigma$ is admissible, the subset $S$
is  a subcomplex of $\Delta^n$. From the fact that $H^{-1}(\{b\})=(\{b\}\times I)\cup (F\times \{1\})$ it follows that
$(H\circ (\sigma\times I))^{-1}(\{b\})=(F\times I)\cup (\Delta^n\times \{1\})$, so the pair
$$
\left(\Delta^{n+1},\left(H \circ (\sigma \times \mathds{1}) \circ \sigma_i\right)^{-1}(\{b\})\right) 
$$
is affinely isomorphic (via $\sigma_i$) to the pair $(K_i,((F\times I)\cup (\Delta^n\times \{1\}))\cap K_i)$. This implies that
the singular simplex $H \circ (\sigma \times \mathds{1}) \circ \sigma_i\colon \Delta^{n+1}\to X'$ is admissible. We have thus proved
that the homotopy operator $T_*$ induces a well-defined homotopy operator
$$
T'_*\colon C_*^\calM(F,\{b\};\calR)\to C_{*+1}^\calM(F,\{b\};\calR)\ 
$$
between the identity (since $H(\cdot,0)$ is the identity of $F$) and the zero map (since $H(\cdot,1)$ is the constant map at $b$, and simplices
supported in $\{b\}$ are null in the marked chain complex). This concludes the proof.
\end{proof}


\subsection{Small Marked Homology}
A key fact in the proof of the Excision Property (or of the Mayer-Vietoris sequence) for ordinary singular homology is that
singular homology may be computed by simplices which are supported in the elements of an open cover. In this subsection we show that the same property holds for
marked homology.

\begin{defn}
Let $(X,B)$ be a marked space and let $\mathcal{U}$ be a family of subsets of $X$ such that
$$
X\subseteq \bigcup_{U\in\calU} \inte(U)\ ,
$$
where $\inte(U)$ denotes the biggest open set contained in $U$. 
We say that a(n admissible) singular simplex $\sigma \colon \Delta^{n} \rightarrow X$ is $\calU$-\emph{small} if its image  lies entirely in some $U \in \mathcal{U}$. 
We denote by $C_*^\calU(X,B;\calR)$ (resp.~$C_*^{\calU,\calM}(X,B;\calR)$) the submodule of $C_*(X,B;\calR)$ (resp.~of $C_*^{\calM}(X,B;\calR)$) generated
by the classes of $\calU$-small simplices (resp.~$\calU$-small admissible simplices).

Then $C_*^\calU(X,B;\calR)$ (resp.~$C_*^{\calU,\calM}(X,B;\calR)$) is a subcomplex of $C_*(X,B;\calR)$ (resp.~of $C_*^{\calM}(X,B;\calR)$),
whose homology is denoted by $H_*^\calU(X,B;\calR)$ $\, $(resp. $H_*^{\calU,\calM}(X,B;\calR)$).

\end{defn}

The following theorem extends  a fundamental result about  ordinary singular homology to marked homology:

\begin{thm}\label{TeorSmall}
Let $s_{*} \colon C_{n}^{\mathcal{U}, \mathcal{M}}(X,B;\calR) \rightarrow C_{n}^{\mathcal{M}}(X,B;\calR)$ be the obvious inclusion.
 Then, there exists a chain map $\rho_* \colon C_{*}^{\mathcal{M}}(X,B;\calR) \rightarrow C_{*}^{\mathcal{U}, \mathcal{M}}(X,B;\calR)$ such that 
 the compositions $s_{*} \circ \rho_*$ and $\rho_* \circ s_{*}$ are both chain homotopic to the identity.
\end{thm}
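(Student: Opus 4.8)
The plan is to adapt the classical barycentric-subdivision argument for the excision/small-simplices theorem in singular homology (as in \cite[Prop.~2.21]{hatcher}), checking at each stage that the operators involved preserve admissibility and hence descend to the marked quotient complex. First I would recall the iterated barycentric subdivision operator $\mathrm{Sd}_*$ together with its chain homotopy $D_*$ with the identity on $C_*(X;\calR)$. The key observation is that barycentric subdivision is \emph{affine} on each simplex: if $\sigma\colon\Delta^n\to X$ is a singular simplex and $\Delta^n=\bigcup_j L_j$ is the simplicial complex structure given by the barycentric subdivision, then $\mathrm{Sd}(\sigma)=\sum_j\pm\,\sigma|_{L_j}$ where $\sigma|_{L_j}$ denotes $\sigma$ precomposed with the affine parametrization $\Delta^n\to L_j$. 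If $\sigma$ is admissible, i.e.\ $S:=\sigma^{-1}(B)$ is a subcomplex of $\Delta^n$, then $S$ is automatically a subcomplex of the barycentrically subdivided $\Delta^n$ as well, so $S\cap L_j$ is a subcomplex of $L_j$ and each face $\sigma|_{L_j}$ is again admissible. Hence $\mathrm{Sd}_*$ restricts to $\widehat{C}^\calM_*(X,B;\calR)$, sends $C_*(B;\calR)$ to itself (constant simplices stay constant), and therefore descends to a chain map $\mathrm{Sd}_*\colon C^\calM_*(X,B;\calR)\to C^\calM_*(X,B;\calR)$.

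Next I would treat the chain homotopy $D_*$ between $\mathrm{Sd}_*$ and the identity. In the standard construction $D_*$ is built, via the usual cone/prism formula, from affine maps into prisms $\Delta^n\times I$ over the faces of $\sigma$, exactly as in the proofs of Theorems~\ref{homotopy-inv} and~\ref{PropVanishingHomologyForCones} above: each simplex appearing in $D_n(\sigma)$ is of the form $H\circ(\sigma\times\mathds 1)\circ\tau$ for an affine embedding $\tau$ of $\Delta^{n+1}$ into $\Delta^n\times I$, where here $H$ is the ``straight-line to the barycentric subdivision'' homotopy, which is itself built affinely on simplices. The same subcomplex bookkeeping as in those proofs ---$S\times I$ is a subcomplex of (a suitable simplicial structure on) $\Delta^n\times I$, hence meets each $\tau(\Delta^{n+1})$ in a subcomplex--- shows that $D_*$ preserves admissibility and sends $C_*(B;\calR)$ to $C_{*+1}(B;\calR)$, so it descends to $D_*\colon C^\calM_*(X,B;\calR)\to C^\calM_{*+1}(X,B;\calR)$. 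Iterating, for each simplex $\sigma$ there is an integer $m(\sigma)$ with $\mathrm{Sd}^{m(\sigma)}(\sigma)$ being $\calU$-small (by the Lebesgue number lemma applied to the open cover $\{\sigma^{-1}(\inte U)\}$ of the compact $\Delta^n$), and the standard trick of defining $\rho_*$ using for each $\sigma$ the least such $m(\sigma)$, together with the accumulated homotopies, produces a chain map $\rho_*\colon C^\calM_*(X,B;\calR)\to C^{\calU,\calM}_*(X,B;\calR)$ with $s_*\circ\rho_*$ chain homotopic to the identity; and $\rho_*\circ s_*$ is chain homotopic to the identity because on $\calU$-small chains $\rho_*$ agrees with the identity up to a homotopy supported among $\calU$-small simplices. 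The crucial point throughout is that $\mathrm{Sd}$ applied to a $\calU$-small admissible simplex is a sum of $\calU$-small admissible simplices, so all the operators genuinely restrict to the subcomplex $C^{\calU,\calM}_*$.

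The main obstacle I anticipate is purely bookkeeping rather than conceptual: one must verify that \emph{every} affine operation used in the classical construction (barycentric subdivision, the Alexander-Whitney style cone homotopies, and their iterates) carries a subcomplex $S\subseteq\Delta^n$ to a subcomplex of the relevant subdivided prism or simplex, so that the preimage of $B$ under each resulting singular simplex is again a subcomplex. Once this compatibility is established ---and it follows in each case from the single fact that an affine map sending a simplicial complex into another affinely maps subcomplexes to subcomplexes, combined with the fact that $S$ remains a subcomplex after any barycentric subdivision--- the descent of $\mathrm{Sd}_*$, $D_*$ and $\rho_*$ to the marked quotient, and the chain-homotopy identities, follow verbatim from the classical proof of \cite[Prop.~2.21]{hatcher}. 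I would therefore organize the write-up as: (1) recall $\mathrm{Sd}_*$ and $D_*$; (2) prove the admissibility-preservation lemma; (3) deduce that everything descends to $C^\calM_*$ and restricts to $C^{\calU,\calM}_*$; (4) quote the classical argument to assemble $\rho_*$ and the two chain homotopies.
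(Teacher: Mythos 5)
Your proposal is correct and follows essentially the same route as the paper: both reduce to Hatcher's argument for small simplices and verify that the barycentric subdivision operator and its prism homotopy preserve admissibility, using that the relevant affine maps carry the subcomplex $\sigma^{-1}(B)$ into subcomplexes of the subdivided simplices and of the triangulated prism. The paper phrases the admissibility check via smallest open faces and cells rather than via subcomplexes of the subdivision, but the content is the same.
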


\begin{proof}
The proof for ordinary singular homology (see e.g.~\cite[Prop. 2.21]{hatcher}) works also in the context of marked homology. Here below we focus our attention on
some subtleties that arise in the context of marked homology.

Let $S_*\colon C_*(X;\calR)\to C_*(X;\calR)$ be the usual barycentric subdivision operator. We would like to prove that $S_*$ induces a well-defined operator
$S_*^\calM\colon C_*^\calM (X,B;\calR)\to C_*^\calM(X,B;\calR)$ on marked chains. The fact that $S_*$ sends chains supported in $B$ to chains supported in $B$ is obvious,
so we need to show that $S_*(\sigma)$ is an admissible chain provided that the singular simplex $\sigma$ is admissible. To this aim, observe that
a singular simplex $\sigma\colon \Delta^n\to X$ is admissible if and only if the following condition holds: let $p\in \Delta^n$ and let $D(p)\subseteq \Delta^n$ be the unique
open face of $\Delta^n$ containing $p$ (such a face may have any dimension between $0$, when $p$ is a vertex of $\Delta^n$, and
$n$, when $p$ lies in the interior of $\Delta^n$); if $\sigma(p)\in B$, then $\sigma(\overline{D(p)})\subset B$. 

Suppose now that $\sigma\colon \Delta^n\to X$ is admissible, let $K\subseteq \Delta^n$ be a geometric $n$-simplex appearing in the barycentric decomposition of $\Delta^n$,
and let $\tau\colon \Delta^n\to K$ be an affine parametrization of $K$. In order to show that $S_*(\sigma)$ is an admissible chain it is sufficient to prove
that $\sigma\circ \tau$ is an admissible singular simplex. However, let $p\in \Delta^n$ be such that $\sigma(\tau(p))\in B$. Let $D(p)$ (resp.~$D(\tau(p))$) be the unique open 
face of $\Delta^n$ containing $p$ (resp.~$\tau(p)$), and observe that $\tau(\overline{D(p)})\subseteq \overline{D(\tau(p))}$. Thus, if $\sigma(\tau(p))\in B$, then
by admissibility of $\sigma$ we have $\sigma (\overline{D(\tau(p))})\subseteq B$, hence $\sigma(\tau(\overline{D(p)}))\subseteq B$. This proves that
$\sigma\circ\tau$ is admissible, thus showing that the operator $S_*^\calM$ is indeed well defined.

In order to prove the theorem we also need to show that the operator $S_*^\calM$ is homotopic to the identity. To this aim, let
$T_*\colon C_*(X;\calR)\to C_{*+1}(X;\calR)$ be the standard homotopy between $S_*$ and the identity of $C_*(X;\calR)$ (as defined e.g. in~\cite[Prop. 2.21]{hatcher}). 
As usual we need to prove
that $T_*$ preserves admissible chains.
To this aim we first describe the geometric meaning of $T_*$. 
We inductively triangulate the prism $\Delta^{n} \times I$ by taking the cone of the whole triangulated boundary  $\left(\Delta^{n} \times \{0\}\right) \bigcup \left(\partial \Delta^{n} \times I\right)$
with respect to the barycenter of $\Delta^{n} \times \{1\}$. We also fix an arbitrary top-dimensional simplex $K$ of the triangulation of $\Delta^n\times I$ just described, and
an affine parametrization $\tau\colon \Delta^{n+1}\to K$ of $K$. 

Let now $\sigma\colon \Delta^n\to X$ be an admissible simplex, and let $\pi\colon \Delta^{n}\times I\to\Delta^n$ be the projection
onto the first factor. In order to prove that $T_*$ preserves admissible chains, it is sufficient to show that the singular simplex
$$
\sigma \circ \pi\circ \tau\colon \Delta^{n+1}\to X
$$
is admissible. As before, for every $p\in\Delta^{n+1}$ we let $D(p)$ be the smallest open face of $\Delta^{n+1}$ containing $p$. Moreover,
for every $q\in \Delta^n\times I$ we define $C(q)$ as the smallest open cell containing $q$ in the product cell structure of $\Delta^n\times I$
(where we understand that $\Delta^n$ and $I$ are endowed with the cellular structure induced by their simplicial structure). Then it is easy to check
that $\tau(\overline{D(p)})\subseteq \overline{C(\tau(p))}$. As a consequence, if $\sigma(\pi(\tau(p)))\in B$, then by admissibility of $\sigma$
we have $\sigma (\overline{D(\pi(\tau(p)))})\subseteq B$, hence $\sigma (\pi(\overline{C(\tau(p))}))\subseteq B$. But this implies in turn that
$\sigma(\pi(\tau(\overline{D(p)})))\subseteq B$, i.e.~that $\sigma\circ\tau$ is admissible.

We have thus proved that the homotopy operator $T_*$ induces an operator
$T_*^\calM\colon C_*^\calM(X,B;\calR)\to C_{*+1}^\calM(X,B;\calR)$ which realizes a homotopy between $S_*^\calM$ and the identity
of $C_*^\calM(X,B;\calR)$.
Now the conclusion follows from the very same 
 arguments described in~\cite[Prop. 2.21]{hatcher} for ordinary singular homology.
\end{proof}

\begin{cor}\label{TeorSmall:cor}
The inclusion $s_{*} \colon C_{n}^{\mathcal{U}, \mathcal{M}}(X,B;\calR) \rightarrow C_{n}^{\mathcal{M}}(X,B;\calR)$  induces an isomorphism
$$
H_n^\calM(s_n)\colon H_n^{\calU,\calM}(X,B;\calR)\to H_n^\calM(X,B;\calR)
$$
in every degree $n\in\mathbb{N}$.
\end{cor}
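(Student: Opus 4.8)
The plan is to deduce Corollary~\ref{TeorSmall:cor} formally from Theorem~\ref{TeorSmall} by the standard homological algebra argument that a pair of chain maps $s_*$ and $\rho_*$ whose compositions $s_*\circ\rho_*$ and $\rho_*\circ s_*$ are each chain homotopic to the respective identity maps are mutually inverse chain homotopy equivalences, and therefore induce mutually inverse isomorphisms on homology. This is the same reasoning that yields the analogous statement for ordinary singular homology in \cite[Prop. 2.21]{hatcher}; here I would simply quote Theorem~\ref{TeorSmall} and invoke functoriality of homology.

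Concretely, Theorem~\ref{TeorSmall} provides the chain map $\rho_*\colon C_*^{\calM}(X,B;\calR)\to C_*^{\calU,\calM}(X,B;\calR)$. Passing to homology, we obtain induced maps
$$
H_n^{\calM}(s_n)\colon H_n^{\calU,\calM}(X,B;\calR)\to H_n^{\calM}(X,B;\calR),\qquad
H_n^{\calM}(\rho_n)\colon H_n^{\calM}(X,B;\calR)\to H_n^{\calU,\calM}(X,B;\calR).
$$
Since $s_*\circ\rho_*$ is chain homotopic to $\mathrm{id}_{C_*^{\calM}(X,B;\calR)}$, chain-homotopic maps inducing the same map on homology gives $H_n^{\calM}(s_n)\circ H_n^{\calM}(\rho_n)=H_n^{\calM}(s_*\circ\rho_*)=\mathrm{id}$ on $H_n^{\calM}(X,B;\calR)$. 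Likewise, since $\rho_*\circ s_*$ is chain homotopic to $\mathrm{id}_{C_*^{\calU,\calM}(X,B;\calR)}$, we get $H_n^{\calM}(\rho_n)\circ H_n^{\calM}(s_n)=\mathrm{id}$ on $H_n^{\calU,\calM}(X,B;\calR)$. Hence $H_n^{\calM}(s_n)$ is an isomorphism in every degree $n$, with inverse $H_n^{\calM}(\rho_n)$.

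There is essentially no obstacle here: all the genuine work—checking that the barycentric subdivision operator and the associated prism homotopy operator preserve admissible chains, and that the iterated-subdivision construction of $\rho_*$ goes through in the marked setting—has already been carried out in the proof of Theorem~\ref{TeorSmall}. The only point worth a word of care is that the chain homotopies furnished by Theorem~\ref{TeorSmall} live in the marked complexes $C_*^{\calM}$ and $C_*^{\calU,\calM}$ (not merely in the ambient singular complexes), so that the standard ``chain homotopic maps agree on homology'' lemma applies directly to the homologies $H_*^{\calM}$ and $H_*^{\calU,\calM}$ in question; this is exactly what Theorem~\ref{TeorSmall} was stated to guarantee. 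The corollary is therefore immediate.
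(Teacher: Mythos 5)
Your argument is correct and is precisely the (standard, essentially tautological) deduction the paper intends: Theorem~\ref{TeorSmall} exhibits $s_*$ and $\rho_*$ as mutually inverse chain homotopy equivalences between the marked complexes, so they induce mutually inverse isomorphisms on homology. The paper states the corollary without proof for exactly this reason, so there is nothing to add.
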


We are now ready to provide the proof of Theorem~\ref{main:basic:thm}, which states that
 the inclusion $$i_n\colon C^\calM_n(X,B;\calR)\to C_n(X,B;\calR)$$ induces an isomorphism 
$$
H_n(i_n)\colon H^\calM_n(X,B;\calR)\to H_n(X,B;\calR)
$$
for every $n\in\mathbb{N}$.

\subsection{Proof of Theorem~\ref{main:basic:thm}}\label{proof:main:basic:subsec}
Let $(X,B)$ be a marked space, and let 
$$
F_B=\bigcup_{b\in B} F_b
$$
be the union of disjoint quasiconical closed neighbourhoods of the points in $b$. We also fix the cover $\calU=\{F_B,X\setminus B\}$ of $X$,
and we observe that the  interiors of the elements of $\calU$ still cover the whole of $X$. The marked chain complex $C_*^{\calU,\calM}(F_B,B;\calR)$ is naturally
a subcomplex of  $C_*^{\calU,\calM}(X,B;\calR)$, so we can consider the short exact sequence of complexes
$$
\xymatrix{
C_*^{\calU,\calM}(F_B,B;\calR)\ar[r] & C_*^{\calU,\calM}(X,B;\calR)\ar[r]^-{(\pi_E)_*}  & E_*\ ,
}
$$
where 
$$
E_* = C_*^{\calU,\calM}(X,B;\calR)/C_*^{\calU,\calM}(F_B,B;\calR)\ .
$$
Recall now from Proposition~\ref{PropVanishingHomologyForCones} that the marked homology of the pair $(F_B,B)$ vanishes in every degree. Therefore, by looking at the long exact sequence 
associated to the short exact sequence above we conclude that the quotient map induces an isomorphism 
$$
H_n(\pi_E)\colon H_n^{\calU,\calM}(X,B;\calR)\to H_n(E_*)
$$
for every $n\in\mathbb{N}$. 

The usual relative chain complex $C_*(F_B,B;\calR)$ is naturally a subcomplex of $C_*^\calU(X,B;\calR)$, so we can consider the quotient map
$$
(\pi_D)_*\colon C_*^\calU(X,B;\calR)\to D_*\ =\ C_*^\calU(X,B;\calR)/C_*(F_B,B;\calR)\ .
$$
Since $B$ is a strong deformation retract of $F_B$ we have $H_n(F_B,B;\calR)=0$ for every $n\in\mathbb{N}$, so arguing as above we deduce that the induced map
$$
H_n(\pi_D)\colon H_n^{\calU}(X,B;\calR)\to H_n(D_*)
$$
is an isomorphism for every $n\in\mathbb{N}$.

Observe that the set of (the classes of) singular simplices which are supported in $X\setminus B$ but not entirely contained in $F_B \setminus B$ provides a basis both of $E_*$ and of $D_*$, so
the inclusion $i^\calU_*\colon C_*^{\calU,\calM}(X,B;\calR)\hookrightarrow C_*^\calU(X,B;\calR)$ induces an isomorphism
$$
\alpha_*\colon E_*\to D_*\ .
$$

Since $H_n(\pi_E)$, $H_n(\pi_D)$, $H_n(\alpha)$ are all isomorphisms, from the commutative diagram
$$
\xymatrix{
H_n^{\calU,\calM}(X,B;\calR) \ar[rr]^-{H_n(\pi_E)} \ar[d]_-{H_n(i_*^\calU)} & & H_{n}(E_*) \ar[d]^-{H_{n}(\alpha)}  \\
 H_n^\calU(X,B;\calR) \ar[rr]_-{H_n(\pi_D)} & & H_{n}(D_*)
}
$$
we deduce that the map
$$
H_n(i^\calU_*)\colon H_n^{\calU,\calM}(X,B;\calR)\to H_n^\calU(X,B;\calR)
$$
is an isomorphism for every $n\in\mathbb{N}$.

Let us now fix $n\in\mathbb{N}$ and consider the commutative diagram
$$
\xymatrix{
H_n^{\calU,\calM}(X,B;\calR) \ar[r] \ar[d]_-{H_n(i_*^\calU)} & H_n^{\calM}(X,B;\calR)\ar[d]^-{H_n(i_*)}\\
H_n^\calU (X,B;\calR) \ar[r] & H_n(X,B;\calR)\ ,
}
$$
where the horizontal arrows are induced by the inclusions of small chains into generic chains. Theorem~\ref{TeorSmall} and \cite[Prop. 2.21]{hatcher}
ensure that the horizontal arrows are isomorphism, and we have just proved that also the map $H_n(i^\calU_*)$ is an isomorphism.
It follows that the map $H_n(i_*)$ is also an isomorphism, and this concludes the proof of Theorem~\ref{main:basic:thm}.

\section{Ideal simplicial volume}\label{ideal:sec}

In order to define the ideal simplicial volume we need to introduce an $\ell^1$-seminorm on marked homology. 
Henceforth, unless otherwise stated, we will deal only with chains and classes with real coefficients. Therefore, when this does not create ambiguities,
we will omit to specify the coefficients we are working with. The reader will understand that, unless otherwise stated, all the homology (and cohomology) modules 
will have real coefficients.

\subsection{$\ell^1$-(semi)norms and simplicial volumes}\label{seminorms:sub}
Let us
first recall the definition of the $\ell^1$-(semi)norm on ordinary singular homology. If $(X,Y)$ is a topological pair, then for every singular chain
$c=\sum_{i=1}^k a_i\sigma_i \in C_n(X,Y)$ written in reduced form (i.e.~such that $\sigma_i\neq \sigma_j$ if $i\neq j$ and no $\sigma_i$ is supported in $Y\subseteq X$) we set
$$
\|c\|_1=\sum_{i=1}^k |a_i| \ \in \R\ .
$$
The norm $\|\cdot \|_1$ restricts to a norm on the space of relative cycles
$Z_i(X,Y)$, which defines in turn a quotient seminorm (still denoted by $\|\cdot \|_1$) on the homology module $H_i(X,Y)$. If $M$ is a compact oriented $n$-manifold with boundary, then 
$H_n(M,\partial M)\cong \R$ admits a preferred generator, called \emph{real fundamental class}, which is the image via the change of coefficient map of the generator of $H_n(M,\partial M;\mathbb{Z})\cong \mathbb{Z}$ corresponding
to the orientation of $M$. If $[M,\partial M]\in H_n(M,\partial M)$ denotes the real fundamental class of $M$, then the simplicial volume $\|M\|$ of $M$ is defined by
$$
\|M\|=\|[M,\partial M]\|_1\ .
$$

If $(X,B)$ is a marked space, then the $\ell^1$-norm on the relative chain complex $C_*(X,B)$ restricts to an $\ell^1$-norm on $C_*^\calM(X,B)$, which induces in turn a seminorm on
the homology modules $H_*^\calM(X,B)$. We will denote these (semi)norms again with the symbol $\|\cdot \|_1$.

Let now $(M, \partial M)$ be a compact oriented $n$-manifold with boundary, and denote  by $(X,B)$ the associated marked space as defined in Subsection~\ref{associated:marked}, and by $p\colon M\to X$ the natural projection.
Since $\partial M$ is a strong deformation retract of
an open neighboourhood of $\partial M$, the quotient map $p'\colon (M,\partial M)\to (M/\partial M,[\partial M])$ induces an isomorphism
$$
H_k(p')\colon H_k(M,\partial M)\to H_k(M/\partial M,[\partial M])
$$
for every $k\in\mathbb{N}$ (see e.g.~\cite[Prop. 2.22]{hatcher}). For the same reason, also the obvious quotient map
$q\colon(X,B)\to (M/\partial M, [\partial M])$ induces isomorphisms on relative homology in every degree, so from the commutative diagram
$$
\xymatrix{
H_k(M,\partial M) \ar[rr]^{H_k(p')} \ar[rd]^{H_k(p)}& &  H_k(M/\partial M,[\partial M])\\
& H_k(X,B) \ar[ru]^{H_k(q)}& 
}
$$
we can deduce that also the map $H_k(p)\colon H_k(M,\partial M)\to H_k(X,B)$ is an isomorphism in every degreee $k\in\mathbb{N}$. By composing
this map with the inverse of the isomorphism $H_k(i_*)\colon H_k^\calM(X,B)\to H_k(X,B)$ (see Theorem~\ref{main:basic:thm}), we thus obtain, for every $k\in\mathbb{N}$,
the isomorphism
$$
\psi_k\colon H_k(M,\partial M)\to H_k^\calM(X,B)\ .
$$

\begin{defn}\label{ideal:simpl:defn}
The 
\emph{ideal fundamental class} of $M$ is the element
\begin{align*}
[M,\partial M]^\calM&=\psi_n([M,\partial M])\ \in\ H_n^\calM(X,B)\ ,
\end{align*}
and the \emph{ideal simplicial volume} of $M$ is defined by
$$
\isv{M}=\| [M,\partial M]^\calM\|_1\ .
$$
\end{defn}

Observe that both the classical and the ideal simplicial volume of an oriented manifold do not depend on its orientation and that it is straightforward
 to extend the definition also to  nonorientable or disconnected manifolds:
if $M$ is connected and nonorientable, then its simplicial volumes are equal
to one half of the corresponding simplicial volumes of its orientable double covering, and the simplicial
volumes of any  manifold is the sum of the corresponding simplicial volumes of its connected components.

Henceforth, every manifold will be assumed to be compact, connected and oriented. 

\begin{rem}\label{local:degree}
Let us describe a characterization of ideal fundamental cycles that will prove useful in the sequel.

Let $(Z,Z')$ be a topological pair such that $Z\setminus Z'$ is an oriented $n$-manifold, and let $\alpha$ be a relative cycle in $C_n(Z,Z')$.
Recall that for every $x\in Z\setminus Z'$ one may define the local degree of $\alpha$ at $x$ as the real number $d(\alpha)(x)$ corresponding
to the element defined by $\alpha$ in $H_n(Z,Z\setminus\{x\})$ via the canonical identification between $H_n(Z,Z\setminus\{x\})$ and $\mathbb{R}$ induced by the orientation
of $Z\setminus Z'$. In fact, the number $d(\alpha)(x)$ is independent of $x\in Z\setminus Z'$.

Let now $M$ be an $n$-dimensional compact oriented manifold with associated marked space $(X,B)$. 
By definition, a marked cycle $z\in C_n^\calM(X,B)$ is an ideal fundamental cycle if and only if, when considered as a cycle
in the relative chain module $C_n(X,B)$, it defines the same homology class as the image via the natural projection of the classical
fundamental class of $M$. Now the fundamental class of $M$ may be characterized as the unique class $\alpha\in H_n(M,\partial M)$ having local 
degree equal to $1$ at every point $p\in M\setminus \partial M$. As a consequence,  $z$ is an ideal fundamental cycle if and only
if it has local degree equal to $1$ at every point $x\in X\setminus B$. 
\end{rem}

\begin{rem}\label{no:naif}
One may wonder why we do not define the ideal simplicial volume just by taking the $\ell^1$-seminorm
of the image of $[M,\partial M]$ in the relative homology module $H_n(X,B)$, where $(X,B)$ is the marked space associated to $M$. 
The following example shows that this choice would not lead to any meaningful invariant.
Let $M$ be the (natural compactification) of the complement of a hyperbolic knot in the $3$-sphere $S^3$. Since the fundamental group of $\partial M$
is abelian, hence amenable, 
we have
$\isv{M}=\|M\|>0$ (see Corollary~\ref{hyperbolic:cor}). 
On the other hand, it is not difficult to show that $X$ is simply connected (indeed, the Wirtinger presentation of $\pi_1(M)$
shows that $\pi_1(M)$ is generated by meridian loops, all of which are killed in $X$). As a consequence, the bounded cohomology
(with real coefficients) of the pair $(X,B)$ vanishes in every positive degree~\cite{Grom82}, and by a standard duality argument this implies in turn that
the $\ell^1$-seminorm of any element in $H_3(X,B)$ vanishes~\cite{Loeh}. 

Another possible definition of ideal simplicial volume would arise from defining admissible simplices as those singular
simplices $\sigma\colon \Delta^n\to X$ such that $\sigma^{-1}(B)$ coincides with the set of vertices of $\Delta^n$. On the one hand,
this choice would lead to ideal fundamental cycles which are closer in spirit to ideal triangulations. On the other hand, with this choice the whole theory
would be much more complicated: for example, showing that admissibly homotopic maps induce the same morphism on marked homology would be much more difficult;
the simplices obtained by subdividing an admissible one would not be admissible; and even the fact that the ordinary simplicial volume bounds from above the
ideal one, if true, would not be obvious at all. However, by Proposition~\ref{verticesinB} the 
ideal simplicial volume can be computed by looking only at ideal fundamental cycles whose simplices have all their vertices in $B$.
\end{rem}

\subsection{(Marked) bounded cohomology and duality}
As anticipated in the introduction, in order to compute the (marked) simplicial volume it is often useful to switch from the study of singular chains to the dual theory of (bounded)
singular cochains. 

Recall that the chain modules $C_i(M,\partial M)$ and $C_i^\calM(X,B)$ are endowed with $\ell^1$-norms, both denoted by the symbol $\|\cdot \|_1$. 
We denote by $C^i_b(M,\partial M)$ (resp.~$C^i_\calM(X,B)$) the topological dual of $C_i(M,\partial M)$ (resp.~of $C_i^\calM(X,B)$), i.e.~the space of bounded linear functionals
on $C_i(M,\partial M)$ (resp.~on $C_i^\calM(X,B)$), endowed with the  operator norm $\|\cdot\|_\infty$ dual to $\|\cdot \|_1$. 

In the case of ordinary (relative) singular (co)chains,
the modules $C^*_b(M,\partial M)$ define a subcomplex of the ordinary singular chain complex $C^i(M,\partial M)$, and are called the \emph{bounded cochains} modules of the pair $(M,\partial M)$.
The $\ell^\infty$-norm of a cochain $\alpha\in C^i_b(M,\partial M)$ is equal to the supremum of the values taken by $\alpha$ on single singular $i$-simplices with values in $M$ (and not supported in $\partial M$).
The cohomology of the complex $C^*_b(M,\partial M)$ is the \emph{bounded cohomology} of the pair $(M,\partial M)$, and it is denoted by $H^*_b(M,\partial M)$. 
The $\ell^\infty$-norm on each $C^i_b(M,\partial M)$ restricts to the subspace of cocycles, and induces a seminorm (still denoted by $\|\cdot\|_\infty$) on $H^i_b(M,\partial M)$,
for every $i\in\mathbb{N}$.

An analogous characterization holds also for the $\ell^\infty$-norm on $C^i_\calM(X,B)$: indeed,
for every $\varphi\in C^i_\calM(X,B)$, $$
\|\varphi\|_\infty=\sup \{|\varphi(\sigma)|\, |\, \sigma\ \textrm{admissible singular}\ i\textrm{-simplex}\ \textrm{not supported in} \ B\}\ . 
$$
Just as in the ordinary case, the boundary map $\partial_i\colon C_i^\calM(X,B)\to C_{i-1}^\calM(X,B)$ is bounded with respect to the $\ell^1$-norm, hence
it induces a bounded dual map $\delta^{i-1}\colon C^{i-1}_\calM(X,B)\to C^{i}_\calM(X,B)$, which endows
$(C^*_\calM(X,B),\delta^*)$ with the structure of a normed complex. We denote by
$H^*_\calM(X,B)$ the cohomology of the complex $(C^*_\calM(X,B),\delta^*)$, and we endow each
$H^i_\calM(X,B)$, $i\in\mathbb{N}$, with the seminorm induced by $\|\cdot\|_\infty$, which will still be denoted by $\|\cdot\|_\infty$.

The obvious pairings between $C^i_b(M,\partial M)$ and $C_i(M,\partial M)$ and between $C^i_\calM(X,B)$ and $C_i^\calM(X,B)$ induce pairings
\begin{align*}
 \langle\cdot,\cdot\rangle\colon &H^i_b(M,\partial M)\times H_i(M,\partial M)\to \R\, ,\\
 \langle\cdot,\cdot\rangle\colon &H^i_\calM(X,B)\times H_i^\calM(X,B)\to \R\, .
\end{align*}
The following duality result is proved e.g.~in~\cite{Loeh}, and will be used in the proofs of Theorem~\ref{amenable:thm} and~\ref{lowerbound:thm}.

\begin{prop}\label{duality:prop}
Let $\alpha\in H_i^\calM(X,B)$. Then
$$
\|\alpha\|_1=\max \{\langle\varphi,\alpha\rangle\, |\, \varphi\in H^i_\calM(X,B),\, \|\varphi\|_\infty\leq 1\}\ . 
$$
\end{prop}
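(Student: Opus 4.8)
The plan is to mimic the classical duality argument of Gromov (in the form presented in~\cite{Loeh} for ordinary singular homology), exploiting the fact that the marked chain complex $C_*^\calM(X,B)$ is a normed chain complex whose topological dual is precisely $C^*_\calM(X,B)$. First I would recall the elementary half of the statement: if $\varphi\in H^i_\calM(X,B)$ satisfies $\|\varphi\|_\infty\le 1$ and $z\in C_i^\calM(X,B)$ is any marked cycle representing $\alpha$, then for any representative cocycle $f$ of $\varphi$ one has $|\langle\varphi,\alpha\rangle|=|f(z)|\le \|f\|_\infty\|z\|_1\le \|z\|_1$; taking the infimum over all representatives $z$ gives $\langle\varphi,\alpha\rangle\le\|\alpha\|_1$, so the right-hand side is at most the left-hand side. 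I would also note that, since $\|\cdot\|_1$ is a genuine norm on the cycles $Z_i^\calM(X,B)$ (it is the restriction of the $\ell^1$-norm on $C_i^\calM(X,B)=\widehat C_i^\calM(X,B)/C_i(B)$, which is itself an honest norm because the quotient is taken over a subspace spanned by a subset of the canonical basis), the infimum defining $\|\alpha\|_1$ is attained by the usual completeness argument — or, more simply, one only needs it as an infimum and the $\max$ on the right is justified by the Hahn--Banach construction below.

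The substance of the proof is the reverse inequality $\|\alpha\|_1\le\langle\varphi,\alpha\rangle$ for a suitable $\varphi$. The standard trick: let $Z=Z_i^\calM(X,B)$ be the subspace of marked $i$-cycles and $B=B_i^\calM(X,B)=\partial\big(C_{i+1}^\calM(X,B)\big)$ the subspace of boundaries. A classical lemma (Gromov; see~\cite[Lemma]{Loeh}) states that in a normed chain complex the seminorm $\|\alpha\|_1$ on homology is positive if and only if the boundary subspace $B$ is closed in $Z$ with respect to $\|\cdot\|_1$, and in general $\|\alpha\|_1$ equals the distance from any representing cycle to the closure $\overline B$ of $B$ in $Z$. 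One then applies the Hahn--Banach theorem: pick a cycle $z$ representing $\alpha$ with $d(z,\overline B)=\|\alpha\|_1=:r$; if $r=0$ both sides vanish (take $\varphi=0$) and there is nothing to prove, so assume $r>0$. By Hahn--Banach there is a bounded linear functional $f$ on $Z$ (extendable to all of $C_i^\calM(X,B)$, keeping the same norm) with $f|_{\overline B}=0$, $\|f\|_\infty\le 1$, and $f(z)=r$. Because $f$ vanishes on $B_i^\calM$, it descends to a class, and because it vanishes on $\partial C_{i+1}^\calM$ it is in particular a cocycle when we regard it as an element of $C^i_\calM(X,B)$ via any norm-preserving extension; more precisely one extends $f$ to $\tilde f\in C^i_\calM(X,B)$ with $\|\tilde f\|_\infty=\|f\|_\infty\le 1$, and then $\delta\tilde f(w)=\tilde f(\partial w)=f(\partial w)=0$ for every admissible $(i+1)$-chain $w$ (note $\partial w\in Z_i^\calM$ so $\tilde f$ and $f$ agree on it), so $\tilde f$ is a cocycle. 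Its class $\varphi=[\tilde f]\in H^i_\calM(X,B)$ has $\|\varphi\|_\infty\le\|\tilde f\|_\infty\le 1$ and $\langle\varphi,\alpha\rangle=\tilde f(z)=f(z)=r=\|\alpha\|_1$, which closes the loop and shows the right-hand side supremum is attained and equals $\|\alpha\|_1$.

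The main obstacle is the normed-chain-complex lemma identifying $\|\alpha\|_1$ with the distance to $\overline{B_i^\calM}$: while this is classical for ordinary singular chains, one must check that the argument transfers verbatim to the marked setting. This amounts to two routine verifications: (i) $\|\cdot\|_1$ is a norm (not merely a seminorm) on $C_i^\calM(X,B)$ — true because $C_i^\calM(X,B)$ is a free $\R$-module on the set of admissible $i$-simplices not supported in $B$, and $\|\cdot\|_1$ is the associated $\ell^1$-norm; and (ii) the boundary operator $\partial_i\colon C_i^\calM\to C_{i-1}^\calM$ is bounded (indeed $\|\partial_i\|\le i+1$), which is stated just above in the excerpt. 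Granting these, the identification $\|\alpha\|_1=d\big(z,\overline{B_i^\calM}\big)$ and the Hahn--Banach step go through with no change. A small additional point to check is that the pairing $\langle\cdot,\cdot\rangle\colon H^i_\calM(X,B)\times H_i^\calM(X,B)\to\R$ is well-defined, i.e.\ independent of the chosen representatives; this is immediate since a cocycle vanishes on boundaries and a coboundary $\delta g$ satisfies $\delta g(z)=g(\partial z)=0$ on cycles.

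Thus the proof reduces to: (1) elementary inequality via the norm estimate $|f(z)|\le\|f\|_\infty\|z\|_1$; (2) the normed-complex lemma $\|\alpha\|_1=\operatorname{dist}(z,\overline{B_i^\calM})$; (3) Hahn--Banach to produce a norm-$\le 1$ functional vanishing on $\overline{B_i^\calM}$ and equal to $\|\alpha\|_1$ on $z$; (4) checking this functional extends to a cocycle of norm $\le 1$ in $C^i_\calM(X,B)$. I expect step (2) to be the only place requiring care, and even there the only marked-specific input is the structure of $C_i^\calM(X,B)$ as an $\ell^1$-space on a basis of admissible non-degenerate simplices.
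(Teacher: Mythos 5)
Your proof is correct and is essentially the standard Hahn--Banach duality argument that the paper invokes by citing L\"oh rather than reproving: the paper gives no proof of Proposition~\ref{duality:prop}, and your argument (easy inequality from $|f(z)|\le\|f\|_\infty\|z\|_1$, identification of $\|\alpha\|_1$ with the distance to the closure of the boundaries, Hahn--Banach extension to a norm-one cocycle) is exactly the intended one. The only nitpick is in the easy inequality: since $\|\varphi\|_\infty$ is an \emph{infimum} over representing cocycles, an arbitrary representative $f$ need not satisfy $\|f\|_\infty\le 1$, so one should pick representatives with $\|f\|_\infty\le 1+\varepsilon$ and let $\varepsilon\to 0$.
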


\subsection{Ideal simplicial volume vs.~complexity}\label{isv:vs:c}
Recall that a $\Delta$-complex is a topological space obtained by gluing a family of copies of the standard simplex along affine diffeomorphisms of some of their faces.
Therefore, $\Delta$-complexes provide a mild generalization of (geometric realizations of) simplicial complexes, the only differences being that simplices in $\Delta$-complexes need not be embedded (since identifications between pairs of faces of the same simplex are allowed), and that distinct simplices in $\Delta$-complexes may share more
than one face.

\begin{defn}
Let $(M, \partial M)$ be a compact $n$-manifold with boundary with associated marked space $(X,B)$. An \emph{ideal triangulation} of $M$ is a realization of $(X,B)$ as 
a $\Delta$-complex whose set of vertices is equal to $B$. The \emph{complexity} $c(M)$ of $M$ is defined as the minimal number of $n$-dimensional simplices
in any ideal triangulation of $M$.
\end{defn}

Let us now prove that the complexity bounds the ideal simplicial volume from above. Let $M$ be an $n$-manifold with boundary with $c(M)=m$, and let $(X,B)$ be the
marked space associated to $M$. By definition of complexity,
there exists an ideal triangulation of $M$ with $m$ top-dimensional simplices, i.e.~a realization of $X$ as a $\Delta$-complex
with the following properties: $X$ is obtained by gluing $m$ copies $\Delta^n_1,\ldots, \Delta^n_m$ of the standard simplex $\Delta^n$; 
 the set of vertices of the resulting $\Delta$-complex is equal to $B$. Recall that $M$ is oriented, so for every $j=1,\ldots,m$ we can fix an orientation-preserving
 affine identification $\sigma_j\colon \Delta^n\to \Delta^n_j$. It is not quite true that the sum of the $\sigma_j$ defines a marked cycle. Nevertheless, in order to obtain a cycle
 out of the $\sigma_j$ it is sufficient to alternate them as follows. 
 
 Let $\mathfrak{S}_{n+1}$ be the group of permutations of the set $\{0,\ldots,n\}$, and denote by $e_0,\ldots,e_n$ the
 vertices of the standard simplex $\Delta^n$. For every $\tau\in \mathfrak{S}_{n+1}$ we denote
 by $\overline{\tau}$ the unique affine automorphism $\overline{\tau}\colon \Delta^n\to\Delta^n$ such
 that $\overline{\tau}(e_i)=e_{\tau(i)}$ for every $i=0,\ldots,n$. Observe now that for each $j=0,\ldots,m$ and every
 $\tau\in\mathfrak{S}_{n+1}$
 the set $(\sigma_j\circ\overline{\tau})^{-1}(B)$ is equal to the set of vertices of $\Delta^n$,
 so $\sigma_j\circ \overline{\tau}$ is admissible. We may thus define the marked chain
 $$
 z=\frac{1}{(n+1)!}\sum_{j=1}^m \sum_{\tau\in\mathfrak{S}_{n+1}} \varepsilon(\tau)\sigma_j\circ \overline{\tau}\ \in \ C_n^\calM(X,B)\ ,
 $$
 where $\varepsilon(\tau)\in \{1,-1\}$ denotes the sign of $\tau$. 
 
 It is now easy to check that $z$ is indeed a cycle. Moreover, 
 the local degree of $z$ at any point in $X\setminus B$ is equal to one, so Remark~\ref{local:degree} implies that the class $[z]^\calM$ of $z$ in $H_n^\calM(X,B)$ is the ideal fundamental class of $M$. 
 Finally, we have $\|z\|_1= m$, hence
 $$
 \isv{M}=\|[M,\partial M]^\calM\|_1\leq \|z\|_1=m=c(M)\ .
 $$
 This concludes the proof of Theorem~\ref{complexity}.

\subsection{Ideal simplicial volume vs.~classical simplicial volume}\label{class:sub}
Let $\sigma\colon \Delta^i\to M$ be a singular simplex, and recall that $p\colon (M,\partial M)\to (X,B)$ is the 
natural projection.
We say that $\sigma$ is \emph{admissible} if $p\circ \sigma$ is admissible as a simplex with values in the marked space $(X,B)$, i.e.~if
 $\sigma^{-1}(\partial M)$
is a subcomplex of $\Delta^i$. Moreover, a chain $c\in C_*(M,\partial M)$ is admissible if it is (the class of) a linear combination of admissible simplices.
The following lemma provides the key step in the proof of the inequality $\isv{M}\leq \|M\|$, and will be useful also in the proof of Theorem~\ref{bound:degree}.

\begin{lemma}\label{normal:lemma}
Let $M$ be a manifold with boundary,
and let $A'$ be a closed subspace of a perfectly normal topological space $A$. Let also $f\colon (A,A')\to (M,\partial M)$
be a map of pairs. Then $f$ is homotopic (as a map of pairs) to a map $g\colon (A,A')\to (M,\partial M)$ such that
$g^{-1}(\partial M)=A'$.
\end{lemma}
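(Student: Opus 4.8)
The plan is to push the image of $f$ off the boundary $\partial M$ using a collar, exactly as one does for ordinary homotopy-of-pairs arguments, but being careful to arrange that the preimage of $\partial M$ becomes \emph{exactly} $A'$ rather than merely contained in a neighborhood of $A'$. First I would fix a collar $\calC\cong \partial M\times[0,1)$ of $\partial M$ in $M$, with $\partial M$ identified with $\partial M\times\{0\}$. Consider the open set $V=f^{-1}(\partial M\times[0,1/2))\supseteq A'$. Since $A$ is normal and $A'$ is closed, by Urysohn's lemma there exists a continuous function $\lambda\colon A\to[0,1]$ with $\lambda\equiv 0$ on $A'$ and $\lambda\equiv 1$ on $A\setminus V$.

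The idea is now to define a homotopy that, on the collar coordinate, pushes points inward by an amount governed by $\lambda$, so that only points of $A'$ (where $\lambda=0$) can remain on $\partial M$. Concretely, for $a\in V$ write $f(a)=(x(a),t(a))\in\partial M\times[0,1/2)$; for $a\notin A'$ one defines the endpoint $g$ of the homotopy by replacing $t(a)$ with something like $t(a)+\lambda(a)\cdot\bigl(\tfrac12-t(a)\bigr)$ (staying in the collar), and one interpolates linearly in the homotopy parameter $s\in[0,1]$. Outside $V$ one leaves $f$ unchanged, which is consistent since on $\partial V\cap(A\setminus A')$ one has $\lambda=1$ forcing the collar coordinate to $1/2$, matching the unchanged map there; and on $A'$ the collar coordinate stays $0$ throughout, so $A'$ maps into $\partial M$ at every stage. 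One checks that at time $s=1$ a point maps into $\partial M$ iff its collar coordinate is $0$ iff (given $\lambda(a)=0$ or $t(a)=0$, but if $a\notin A'$ then $\lambda(a)>0$ is not guaranteed — hence one should instead take the pushed coordinate to be $\max(t(a),\lambda(a)\cdot\tfrac12)$ or simply $\lambda(a)\cdot\tfrac12+(1-\lambda(a))t(a)$, which is $0$ precisely when $\lambda(a)=0$ and $t(a)=0$) the point lies in $A'$. Thus $g^{-1}(\partial M)=A'$, and the homotopy is a homotopy of pairs since $A'$ maps to $\partial M$ at all times.

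The step I expect to require the most care is verifying that the candidate homotopy is genuinely continuous across $\partial V$, i.e.\ at the interface between "inside the collar where we push" and "outside where we do nothing". This forces the precise choice of the pushing formula: it must depend on $\lambda$ in such a way that near $\partial V$ (where $\lambda\to 1$) the new collar coordinate tends to $\tfrac12$, which is exactly the value $t(a)$ takes on $f^{-1}(\partial M\times\{1/2\})$, ensuring the two definitions agree and glue continuously. The choice $t'(a)=\lambda(a)\cdot\tfrac12+(1-\lambda(a))\,t(a)$ (interpolated via $s\mapsto (1-s)t(a)+s\,t'(a)$ in the homotopy variable) does the job: it is continuous on $V$, agrees with $t(a)$ wherever $\lambda(a)=0$ and with $\tfrac12$ wherever $\lambda(a)=1$, so it extends continuously by $f$ on $A\setminus V$, and its zero set is $\{\lambda=0\}\cap\{t=0\}=A'$.

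Finally, since homotopies of pairs $(A,A')\to(M,\partial M)$ are exactly what is needed to apply Lemma~\ref{normal:lemma} in the sequel (to replace an arbitrary fundamental cycle by an admissible one, and to straighten the map $f$ in Theorem~\ref{bound:degree}), this completes the argument. The only hypotheses genuinely used are the existence of the collar (which always holds for a manifold with boundary) and normality of $A$ (needed for Urysohn); connectedness and orientability of $M$ play no role here.
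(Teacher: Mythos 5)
Your overall strategy is the same as the paper's: fix a collar of $\partial M$, choose a real-valued cutoff function on $A$ vanishing on $A'$, and use it to push the collar coordinate of $f$ away from $0$ at every point not in $A'$, interpolating linearly in the homotopy parameter. The continuity analysis at the interface of the collar region is correct, and your formula $t'(a)=\lambda(a)\cdot\tfrac12+(1-\lambda(a))t(a)$ glues properly with $f$ outside $V$.

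There is, however, one genuine gap: the cutoff function you get from Urysohn's lemma is not strong enough. Urysohn gives $\lambda\equiv 0$ on $A'$ and $\lambda\equiv 1$ on $A\setminus V$, but it gives no control over $\lambda^{-1}(0)$, which may strictly contain $A'$. Your final identification $g^{-1}(\partial M)=\{\lambda=0\}\cap\{t=0\}$ is correct, but the asserted equality $\{\lambda=0\}\cap\{t=0\}=A'$ can fail: a point $a\in V\setminus A'$ with $f(a)\in\partial M$ (so $t(a)=0$) may also satisfy $\lambda(a)=0$, and then $g(a)\in\partial M$ although $a\notin A'$. What is actually needed is a continuous $h\colon A\to[0,1]$ with $h^{-1}(0)=A'$ \emph{exactly}, i.e.\ $A'$ must be a zero set; this is what the paper uses (with $H(x,t)=(m(x),\min\{d(x)+th(x),1\})$ on the preimage of the collar and $H(x,t)=f(x)$ elsewhere, so no auxiliary set $V$ or second closed set is needed at all). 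Granting such an $h$ — which exists in all the applications, where $A$ is a $\Delta$-complex or a manifold, hence perfectly normal — your argument goes through verbatim; without it, the construction does not yield $g^{-1}(\partial M)=A'$.
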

\begin{proof}
A classical result in general topology ensures that $\partial M$ admits a closed collar $\calC$ in $M$. 
More precisely, there exists a subset $\mathcal{C} \subset M$ containing $\partial M$ 
such that the pair $(\calC,\partial M)$ is homeomorphic to the pair
$(\partial M \times [0, 1],\partial M\times\{0\})$. Henceforth we fix an identification
$\calC\cong \partial M \times [0, 1]$ induced by such a homeomorphism.

Since $f$ is a map of pairs, we already know that $f^{-1}(\partial M)\supseteq A'$, and we need to perturb 
$f$ in order to push out of $\partial M$ all points not belonging to $A'$. Since $A$ is perfectly normal, there exists a continuous
map $h\colon A\to [0,1]$ such that $h^{-1}(0)=A'$. Moreover, for every $x\in f^{-1}(\calC)$ we have
$$ f(x)=(m(x),d(x))\in \partial M\times [0,1]\ ,$$
where $m\colon f^{-1}(\calC)\to \partial M$ and $d\colon f^{-1}(\calC)\to [0,1]$ are continuous functions. Let us now define a map
$H\colon A\times [0,1]\to M$ as follows. If $x\in f^{-1}(\overline{M\setminus\calC})$, then $H(x,t)=f(x)$ for every $t\in [0,1]$. If $x\in f^{-1}(\calC)$, then 
$$ H(x,t)=(m(x), \min \{d(x)+th(x), 1\})\ .$$
First observe that, if $x\in f^{-1}(\overline{M\setminus\calC})\cap f^{-1}(\calC)$, then necessarily $f(x)=(m(x),1)$, and this readily implies that
the map $H$ is well defined, hence continuous. Moreover, if $x\in A'$, then $d(x)=h(x)=0$, so $H(x,t)=f(x)\in \partial M$ for every $t\in [0,1]$,
i.e.~the maps $f=H(\cdot, 0)$ and $g=H(\cdot, 1)$ are homotopic as map of pairs. Finally, we have 
$g(x)\in \partial M$ if and only if $f(x)\in \calC$ and $d(x)=h(x)=0$, i.e.~if and only if $x\in A'$, as desired.
\end{proof}

We are now ready to prove that the ideal simplicial volume is  bounded from above by the classical simplicial volume:

\begin{thm}\label{main-in1}
Let $(M, \partial M)$ be an orientable compact manifold with boundary.
Then 
$$
\isv{M}\leq \|M\|\ .
$$
\end{thm}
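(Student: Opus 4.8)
The plan is to start from a relative fundamental cycle of $(M,\partial M)$ of almost minimal $\ell^1$-norm, perturb it so that each of its simplices becomes admissible after collapsing $\partial M$, and then push the result into the associated marked space $(X,B)$: this produces a representative of the ideal fundamental class whose norm does not exceed $\|M\|+\varepsilon$.

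Fix $\varepsilon>0$ and a relative fundamental cycle $z=\sum_{i=1}^k a_i\sigma_i\in C_n(M,\partial M)$, written in reduced form, with $\|z\|_1\le\|M\|+\varepsilon$; thus the $\sigma_i$ are pairwise distinct and no $\sigma_i$ is supported in $\partial M$. Out of $z$ I build the finite $\Delta$-complex $P$ carrying the cycle: one takes $k$ copies $\Delta^n_1,\dots,\Delta^n_k$ of the standard simplex and glues a face $F$ of $\Delta^n_i$ to a face $F'$ of $\Delta^n_j$ whenever $\sigma_i$ and $\sigma_j$ restrict to the same singular simplex on $F$ and $F'$. Since equal singular simplices have equal faces, this is a consistent system of identifications, so $P$ is a well-defined (compact, hence normal) $\Delta$-complex, and the maps $\sigma_i$ glue to a continuous map $\Phi\colon P\to M$ with $\Phi|_{\Delta^n_i}=\sigma_i$. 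Let $P'\subseteq P$ be the subcomplex which is the union of the images in $P$ of those faces $F$ of the $\Delta^n_i$ with $\sigma_i(F)\subseteq\partial M$. Then $\Phi(P')\subseteq\partial M$, so $\Phi\colon(P,P')\to(M,\partial M)$ is a map of pairs of exactly the sort to which Lemma~\ref{normal:lemma} applies.

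Applying Lemma~\ref{normal:lemma}, $\Phi$ is homotopic as a map of pairs to a map $\Psi\colon(P,P')\to(M,\partial M)$ with $\Psi^{-1}(\partial M)=P'$; from the proof of that lemma one also reads off that $\Psi=\Phi$ on $P'$ and on $\Phi^{-1}(M\setminus\calC)$, where $\calC$ is the fixed closed collar of $\partial M$. Set $\bar\sigma_i=\Psi|_{\Delta^n_i}$ and $\bar z=\sum_i a_i\bar\sigma_i\in C_n(M,\partial M)$. Because $\Psi$ respects the gluings defining $P$, the assignment $\sigma\mapsto\bar\sigma$ is a well-defined operation on the singular simplices appearing as faces of the $\sigma_i$; it is the identity on faces mapped into $\partial M$, and since the faces of $z$ not landing in $\partial M$ cancel (as $\partial z$ is supported in $\partial M$), one gets $\partial\bar z=\partial z\in C_{n-1}(\partial M)$, so $\bar z$ is again a relative cycle. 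Moreover $\bar\sigma_i^{-1}(\partial M)=P'\cap\Delta^n_i$ is a subcomplex of $\Delta^n$, and is proper since $\sigma_i$ is not supported in $\partial M$; hence each $\bar\sigma_i$ is admissible and not supported in $\partial M$, so $\|\bar z\|_1\le\sum_i|a_i|=\|z\|_1$. Finally, on $\sigma_i^{-1}(M\setminus\calC)$ we have $\bar\sigma_i=\sigma_i$, so at any point $y\in\inte(M)$ outside the collar $\calC$ the cycles $\bar z$ and $z$ have the same local degree, which equals $1$; by Remark~\ref{local:degree}, $\bar z$ represents $[M,\partial M]$.

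To conclude, let $p\colon(M,\partial M)\to(X,B)$ be the natural projection. Admissibility of the $\bar\sigma_i$ means that each $p\circ\bar\sigma_i$ is an admissible singular simplex of $(X,B)$ not supported in $B$, so $p_*(\bar z)\in C_n^\calM(X,B)$ is a marked cycle with $\|p_*(\bar z)\|_1\le\|\bar z\|_1\le\|z\|_1\le\|M\|+\varepsilon$. Regarded in $C_n(X,B)$ it represents $H_n(p)([M,\partial M])$, so by Theorem~\ref{main:basic:thm} and the definition of $\psi_n$ its marked homology class is $\psi_n([M,\partial M])=[M,\partial M]^\calM$. Therefore $\isv{M}\le\|M\|+\varepsilon$, and letting $\varepsilon\to 0$ gives $\isv{M}\le\|M\|$. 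The crux of the argument is the coherent perturbation in the previous paragraph: an individual simplex of a fundamental cycle is easy to make admissible via Lemma~\ref{normal:lemma}, but one must do this compatibly along all the identifications of faces that make $z$ a cycle, which is precisely why the carrier complex $P$ — rather than the standard simplex — is the correct domain on which to invoke the lemma.
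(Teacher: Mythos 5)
Your proposal is correct and follows essentially the same route as the paper: build the carrier $\Delta$-complex $P$ of a nearly minimal relative fundamental cycle, apply Lemma~\ref{normal:lemma} to the induced map of pairs $(P,\partial P)\to(M,\partial M)$ to make the preimage of $\partial M$ exactly the boundary subcomplex, and push the resulting admissible cycle into the marked space. The only (harmless) variation is that you verify the perturbed chain still represents $[M,\partial M]$ via local degrees, whereas the paper simply notes that the perturbed map is homotopic to the original as a map of pairs, so the pushed-forward class is unchanged.
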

\begin{proof}
Let $\varepsilon>0$ be given, and let 
$z\in C_n(M,\partial M)$ be a fundamental cycle such that $\|z\|_1\leq \|M\|+\varepsilon$. We would like to modify $z$ into an admissible fundamental cycle without
increasing its $\ell^1$-norm. To this aim,
we briefly discuss a well-known geometric description of cycles in singular homology, also described in~\cite[Section 13.2]{miolibro}.
We refer the reader to~\cite[Section 5.1]{Loeh:IMRN} for an alternative approach to this construction.

Let $z=\sum_{i=1}^k a_i \overline{\sigma}_i$, and assume that $\overline{\sigma}_i\neq \overline{\sigma}_j$ for  $i\neq j$.
One can construct a $\Delta$-complex ${P}$ associated to ${z}$
by gluing $k$ distinct copies $\Delta^n_1,\ldots,\Delta^n_k$ of the standard $n$-simplex $\Delta^n$ as follows.
For every $i$ we fix an identification between $\Delta^n_i$ and $\Delta^n$, so that we may consider
$\sigma_i$ as defined on $\Delta^n_i$.
For every $i=1,\ldots,k$, $j=0,\ldots, n$, we denote by $F^i_j$ the $j$-th face of $\Delta^n_i$,
and by $\partial^i_j\colon \Delta^{n-1}\to F^i_j\subseteq \Delta_i^n$ the usual face inclusion.
We say that the faces $F^i_j$ and $F^{i'}_{j'}$ are \emph{equivalent} if  
$\overline{\sigma}_i|_{F^i_j}=\overline{\sigma}_{i'}|_{F^{i'}_{j'}}$, or, more formally, if $\bb^i_j\circ \overline{\sigma}_i=\bb^{i'}_{j'}\circ\overline{\sigma}_{i'}$. 
We now define a $\Delta$-complex $P$ as follows.
The simplices of $P$ are $\Delta^n_1,\ldots,\Delta^n_k$, and, 
if $F_i^j$, $F_{i'}^{j'}$ are equivalent, then we identify them via the affine diffeomorphism
$\partial_{i'}^{j'}\circ (\partial_i^j)^{-1}\colon F_i^j\to F_{i'}^{j'}$. 
By construction, the maps $\overline{\sigma}_1,\ldots,\overline{\sigma}_k$ glue up to a well-defined continuous map
$f\colon P\to M$. We also define the boundary $\partial P$ of $P$ as the subcomplex of $P$ given by all the $(n-1)$-faces of $P$
which are sent by $f$ entirely into $\partial M$. By definition, the map $f$ is a map of pairs
from $(P,\partial P)$ to $(M,\partial M)$.

For every $i=1,\ldots, k$, let  $\hat{\sigma}_i\colon \Delta^n\to {P}$ be the singular simplex obtained by composing
the identification $\Delta^n\cong \Delta^n_i$ with the quotient map with values in $P$,
and let us consider the singular chain $z_{{P}}=\sum_{i=1}^k a_i \hat{\sigma}_i\in C_n(P)$. By construction, $z_{{P}}$ is a relative cycle in $C_n(P,\partial P)$, and
the push-forward of $z_{{P}}$ via $f$ is equal to ${z}$. Moreover, $\|z_P\|_1=\|z\|_1$.

By applying Lemma~\ref{normal:lemma} to the map $f\colon (P,\partial P)\to (M,\partial M)$ we now obtain a map
$g\colon (P,\partial P)\to (M,\partial M)$ such that $g^{-1}(\partial M)=\partial P$. We now set 
$$
z'=g_*(z_P)\ \in\ C_*(M,\partial M)\ .
$$
Since $g$ is homotopic to $f$ as a map of pairs, the chain $z'$ is a relative fundamental cycle for $M$. Moreover, we have
$$
\|z'\|_1=\|g_*(z_P)\|_1\leq \|z_P\|_1=\|z\|_1\leq \|M\|+\varepsilon\ .
$$
Finally, if $\sigma'_i\colon \Delta^n\to M$ is a singular simplex appearing in $z'$, then
$\sigma^{-1}(\partial M)$ is a (possibly empty) union of $(n-1)$-faces of $\Delta^n$, hence it is admissible. 

Let $p\colon (M,\partial M)\to (X,B)$ be the natural projection of $M$ onto its associated marked space. 
Since $z'$ is admissible, we have
 $p_*(z')\in C^\calM(X,B)$. Moreover, since $z'$ is a relative fundamental cycle for $M$,  the chain $p_*(z')$ is an ideal fundamental cycle for $M$.
 Thus
$$
\isv{M}=\|[p_*(z')]\|_1\leq \|p_*(z')\|_1\leq \|z'\|_1 \leq \|M\|+\varepsilon\ .
$$
Since $\varepsilon$ is arbitrary, this concludes the proof.
\end{proof}

In order to conclude the proof of Theorem~\ref{main:inequalities} we now need to show the following:

\begin{prop}
There exists a constant $K_n>0$ such that
$$
\|M\|\leq K_n\cdot  \isv{M}
$$
for every
 $n$-dimensional manifold $M$.
\end{prop}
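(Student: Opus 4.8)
The plan is to reverse the argument of Theorem~\ref{main-in1}: starting from an almost optimal ideal fundamental cycle $z\in C_n^\calM(X,B)$ with $\|z\|_1\le\isv{M}+\varepsilon$, one ``un-collapses'' it into an honest relative fundamental cycle $z'\in C_n(M,\partial M)$ whose $\ell^1$-norm exceeds that of $z$ by at most a multiplicative constant depending only on $n$. First I would realize $z$ geometrically, exactly as in the proof of Theorem~\ref{main-in1} but keeping track of the marked structure: writing $z=\sum_{i=1}^k a_i\sigma_i$ in reduced form and gluing $k$ copies of $\Delta^n$ along the faces identified by $z$, one obtains a finite $\Delta$-complex $P$, a subcomplex $Q=f^{-1}(B)\subseteq P$ (a subcomplex since each $\sigma_i^{-1}(B)$ is), and an admissible map of pairs $f\colon(P,Q)\to(X,B)$ such that the tautological chain $z_P=\sum_i a_i\widehat\sigma_i$ is a relative cycle in $C_n(P,Q)$ (here one uses that $\partial z$ is supported in $B$), with $f_*(z_P)=z$ and $\|z_P\|_1=\|z\|_1$. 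Applying the barycentric subdivision operator once — which, as in the proof of Theorem~\ref{TeorSmall}, preserves admissibility and does not increase the $\ell^1$-norm — I may further assume that inside every $n$-simplex of $P$ the subcomplex $Q$ meets that simplex in (at most) a single face; this is the technically crucial normalization, because the link of a face of $\Delta^n$ in $\Delta^n$ is again a simplex, hence contractible.

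The heart of the proof is then an ``un-collapsing lemma'', dual in spirit to Lemma~\ref{normal:lemma}: \emph{there exist a subdivision $P'$ of $P$ whose number of $n$-simplices is at most $K_n$ times that of $P$, with $K_n$ depending only on $n$, and a map of pairs $g\colon(P',\partial P')\to(M,\partial M)$ with $g^{-1}(\partial M)=\partial P'$ a subcomplex of $P'$, such that $p\circ g$ is homotopic, as a map of pairs with values in $(X,B)$, to the corresponding subdivision of $f$.} Granting this, set $z'=g_*(z_{P'})\in C_n(M,\partial M)$, where $z_{P'}$ is the subdivided tautological chain. Then $z'$ is a relative cycle and $\|z'\|_1\le\|z_{P'}\|_1\le K_n\|z_P\|_1=K_n\|z\|_1$, since subdividing an $n$-simplex produces at most $K_n$ simplices, each with a coefficient of the same absolute value. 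Moreover $p_*(z')$ is homologous to $z$ in $C_n(X,B)$, so since the natural projection induces an isomorphism $H_n(p)\colon H_n(M,\partial M)\to H_n(X,B)$ (see Subsection~\ref{seminorms:sub}), $z'$ is a relative fundamental cycle for $M$; hence $\|M\|\le\|z'\|_1\le K_n(\isv{M}+\varepsilon)$, and letting $\varepsilon\to0$ gives the claim with this $K_n$.

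To prove the un-collapsing lemma I would use that $p$ restricts to a homeomorphism $M\setminus\partial M\to X\setminus B$ and that each $F_b$ is the image of a collar $\partial_bM\times[0,1]$ of the corresponding boundary component, with $\partial_bM\times\{0\}$ collapsed to $b$; so $f$ already lifts canonically to $\widetilde f=p^{-1}\circ f$ on $P\setminus Q$, and the only problem is to modify $f$ near $Q$ so that this lift extends continuously over $Q$, taking $Q$ into $\partial M$. Near $Q$ the map $f$ has values in the cones $F_b$, and the obstruction is that the $\partial_bM$-component (``angular part'') of $\widetilde f$ need not converge as one approaches $Q$: $\widetilde f$ may wind around the boundary component. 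Here I would exploit the normalization above: on a regular neighbourhood $N$ of $Q$ that is a subcomplex of a bounded subdivision, $f$ can be homotoped, rel $P\setminus N$ and by induction over the skeleta of $Q$, to a map whose angular part is locally constant near $Q$ — at each inductive step the obstruction lives in the set of homotopy classes of maps from a contractible link into $\partial_bM$, hence vanishes. The modified map lifts over $Q$ into $\partial M$; a final bounded subdivision together with Lemma~\ref{normal:lemma} applied away from $Q$ and a collaring argument on $N$ put $g$ into the required normal form, and homotopy invariance of relative singular homology yields $p_*(z')\sim z$.

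The step I expect to be the main obstacle is precisely this un-collapsing lemma, and within it the two intertwined difficulties: first, that the winding of $f$ around the collapsed boundary components can genuinely be removed by a homotopy of maps of pairs — this is what the single barycentric subdivision is for, since it confines the winding, within each simplex, to a map of a contractible link and thereby kills the relevant obstruction; and second, that the surgery and the collar of $\partial M$ can both be realized using a number of simplices bounded solely in terms of $n$, which is what forces $K_n$ to be independent of $M$. Arranging all of this with choices that are compatible on the faces shared by distinct simplices of $P$ is the delicate point of the argument.
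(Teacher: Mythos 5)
Your reduction to the ``un-collapsing lemma'' is where the argument breaks, and the lemma is not just hard — it is false. The obstruction is not the $\pi_1$-winding that you propose to kill by subdividing until links become contractible; it is a homological obstruction that no subdivision or homotopy can remove. Indeed, suppose $\partial M\neq\emptyset$, $n\geq 2$, and take for $z$ an ideal fundamental cycle in which every $\sigma_i^{-1}(B)$ is $0$-dimensional (e.g.\ the alternated cycle associated to an ideal triangulation, as in Subsection~\ref{isv:vs:c}). Then $Q$ and any subdivision $Q'$ of it are finite sets of points, so $H_{n-1}(Q')=0$. If a continuous map $g\colon P'\to M$ with $g(Q')\subseteq\partial M$ satisfied $p\circ g\simeq f'$ as maps of pairs, then $p_*\bigl(g_*[z_{P'}]\bigr)=f'_*[z_{P'}]=[z]=p_*\bigl([M,\partial M]\bigr)$, and since $H_n(p)$ is injective this would force $g_*[z_{P'}]=[M,\partial M]$. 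But $\partial z_{P'}$ is supported in $Q'$, so by naturality of the connecting homomorphism $\partial\bigl(g_*[z_{P'}]\bigr)$ lies in the image of $H_{n-1}(Q')=0$, whereas $\partial[M,\partial M]=[\partial M]\neq 0$ in $H_{n-1}(\partial M)$. Hence no such $g$ exists, for any bounded (or unbounded) subdivision. Concretely: since $z$ has local degree $1$, the angular maps $\mathrm{lk}(v)\to \partial_b M$ at the vertices $v$ of $Q'$ lying over $b\in B$ have degrees summing to $1$, so at least one is homotopically essential; a map that is continuous at $v$ with $g(v)\in\partial M$ forces that degree to be $0$. The contractibility of the link of a face \emph{inside a single simplex} is beside the point — the relevant link is the link in $P$, which is a closed $(n-1)$-pseudomanifold. (A smaller slip: one barycentric subdivision multiplies the $\ell^1$-norm by $(n+1)!$ rather than preserving it, but that would be harmless here.)

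The paper's proof is designed precisely to sidestep this impossibility: rather than extending the lift over $Q$, it \emph{truncates} each simplex, removing an $\varepsilon$-neighbourhood of $\sigma_i^{-1}(B)$ so as to obtain a polytope from the finite list $\Omega(n,\varepsilon)$, and inductively chooses compatible fundamental cycles for these polytopes with $\ell^1$-norm bounded by a constant $K_n$ depending only on the (finitely many) combinatorial types. The new truncation facets carry nontrivial $(n-1)$-chains which land in $F_B\setminus B$ and, after retracting $F_b\setminus\{b\}$ onto $\partial F_b\cong\partial_b M$, assemble into a fundamental cycle of $\partial M$ — exactly the nonzero boundary class that your construction is structurally unable to produce. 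If you replace ``extend $g$ over $Q$'' by ``discard the part of $z_{P'}$ inside a regular neighbourhood of $Q'$ and send the frontier of that neighbourhood to $\partial M$'', and then supply the uniform bound on the norm of the resulting truncated chains, you recover the paper's argument.
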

\begin{proof}
Let $(X,B)$ be the marked space associated to $M$, let $\pi\colon M\to X$ be the natural projection, and
let $z^\calM=\sum_{i=1}^k a_i\sigma_i\in C_*^\calM(X,B)$ be an ideal fundamental cycle for $M$.
We are going to truncate the singular simplices $\sigma_i$ to obtain a realization of the relative fundamental class of $M$ via singular polytopes.
We will then triangulate these polytopes to get a classical relative fundamental cycle, at the cost of the multiplicative constant $K_n$ mentioned in the statement.

Let us begin with a general definition. Let $Q$ be an $m$-dimensional polytope (i.e.~the convex hull of a finite number of points which span an
$m$-dimensional affine space in some Euclidean space), $m\leq n$. 
We inductively define the notion of \emph{fundamental cycle} for $Q$ as follows. If $m=0$, then $Q$ is a point, and we set $z_Q=\sigma\in C_0(Q)$, where 
$\sigma$ is the constant singular simplex at $Q$. If $0<m\leq n$, we say that a chain $z_Q\in C_m(Q)$ is a fundamental cycle for $Q$ if the following conditions hold:
the class of $z_Q$ in $H_m(Q,\partial Q)$ is indeed a relative fundamental cycle for $(Q,\partial Q)\cong (D^m,\partial D^m)$; if $F_1,\ldots,F_k$ are the facets
of $Q$ (i.e.~the codimension-1 faces of $Q$), then $\partial z_Q=\sum_{i=1}^k c_i$, where $c_i$ is a fundamental cycle for $F_i$ for every $i=1,\ldots,k$.

We now fix $0<\varepsilon<1/(n+1)$ and, for every $0\leq m\leq n$, we define a family $\Omega(m,\varepsilon)$ of oriented polytopes which are obtained by truncating the standard 
simplex $$\Delta^m=\{(t_0,\ldots,t_m)\in \R^{m+1}\, |\, \sum_{i=0}^m t_i=1\}$$
around some of its faces. Namely, we say that a polytope $Q$ belongs to $\Omega(m,\varepsilon)$ if there exists a 
subcomplex $K\subseteq \Delta^m$
 such that
\begin{align*}
Q=\{(t_0,\ldots,t_m)\in \Delta^m\,  |\, & \sum_{i\in A} t_i\leq 1-\varepsilon\ \textrm{whenever}\ A\ \textrm{is\ the\ set\ of\ indices}\\ & \textrm{corresponding\ to\ a\ face\ in}\ K\}\ .
\end{align*}
In other words, $Q$ is obtained by removing from $\Delta^m$ a neighbourhood of $K$ (see Fig.~\ref{Figura1}). We endow $Q$ with the orientation induced by $\Delta^m$, and
every facet of $Q$ will also be oriented as (a subset of) the boundary of $Q$.
The condition $\varepsilon<1/(n+1)$ ensures that $Q$ has nonempty  interior unless $K=\Delta^m$.
If $Q$ lies in $\Omega(m,\varepsilon)$, then
every facet of $Q$  is affinely isomorphic to an element of $\Omega(m-1,\varepsilon)$.

\begin{center}
 \begin{figure}
  \includegraphics{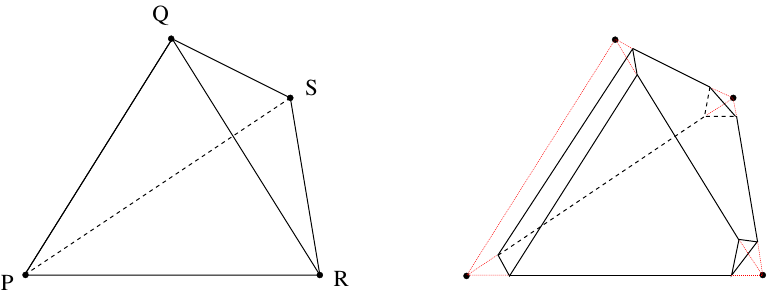}
  \caption{On the left, a standard simplex. On the right, the truncated simplex obtained by removing neighbourhoods of the edge $PQ$ and of the vertices $R$ and $S$.}\label{Figura1}
 \end{figure}

\end{center}

We now claim that for every $0\leq m\leq n$ and for every $Q\in \Omega(m,\varepsilon)$ there exists a fundamental cycle $z_Q$ for $Q$ such that
the following conditions hold:
\begin{enumerate}
 \item $\|z_Q\|_1\leq K_m$, where $K_m$ is a constant only depending on $m$;
 \item let $F_1,\ldots,F_k$ be the facets of $Q$ and let $\partial z_Q=\sum_{i=1}^k c_i$, where $c_i$ is a fundamental cycle for $F_i$ for every $i=1,\ldots,k$.
 If $\varphi\colon F_i\to Q'$ is any orientation-preserving  affine isomorphism
 between $F_i$ and an element $Q'\in\Omega(m-1,\varepsilon)$, then $\varphi_*(c_i)=z_{Q'}$;
 \item if $\varphi\colon Q\to Q$ is an affine isomorphism, then $\varphi_*(z_Q)=\tau(\varphi)z_Q$, where $\tau(\varphi)=\pm 1$ is positive if $\varphi$ is orientation-preserving, and negative
 otherwise.
\end{enumerate}

The existence of the claimed family of fundamental cycles may be proved by induction as follows. If $m=0$, then either $Q=\emptyset$ or $Q$ is a point, and in both cases
the conclusion easily follows. Suppose now that the desired fundamental cycles have been constructed for every $i=0,\ldots,m-1$, and let $Q\in \Omega(m,\varepsilon)$.
For every facet $F$ of $Q$ we have an orientation-preserving affine isomorphism $\varphi\colon F\to Q'$ for some $Q'\in\Omega(m-1,\varepsilon)$, and we set $z_F=\varphi^{-1}_*(z_F)\in C_{m-1}(F)\subseteq C_m(Q)$. 
By (3) (applied to the polytope $Q'$) the chain $z_F$ does not depend on the chosen affine isomorphism. Let us now consider the cycle $z_\partial=\sum_F z_F$. Using (2) and (3) it is immediate to check
that $\partial z_\partial =0$, and this readily implies that $z_\partial$ is a classical fundamental cycle for $\partial Q$. We can then fill $z_\partial$ thus obtaining
a chain $z'_Q\in C_m(Q)$ such that $\partial z'_Q=z_\partial$. If $G$ is the group of the affine isomorphisms of $Q$ into itself, we then set $z_Q=(1/|G|)\sum_{g\in G} \tau(g)g_*(z'_Q)$. Using that
each $z_F$ satisfies (3) we easily see that $\partial z_Q=\partial z'_Q$, and this readily implies that $z_Q$ satisfies (2) and (3). In order to get (1), we only need to observe that
$\Omega(m,\varepsilon)$ is finite, so we may set $K_m=\max \{\|z_Q\|_1\, |\, Q\in\Omega(m,\varepsilon)\}$. Indeed, if $\varepsilon'\neq \varepsilon$, then 
there exists an obvious bijection between $\Omega(m,\varepsilon)$ and $\Omega(m,\varepsilon')$, and fundamental cycles for elements of $\Omega(m,\varepsilon')$ may be chosen to have exactly the same $\ell^1$-norm
as the ones for the corresponding elements of $\Omega(m,\varepsilon)$. This concludes the proof of the claim.

We can now proceed with the proof of the proposition. Let $z^\calM=\sum_{i=1}^s a_i\sigma_i \in C^\calM(X,B)$ be an ideal fundamental cycle for $M$. For every $b\in B$ denote by $F_b$ a closed quasiconical
neighbourhood of $b$ such that $X\setminus \bigsqcup_{b\in B} \inte(F_b)$ is homeomorphic to $M$. We also set $F_B=\bigsqcup_{b\in B} F_b$ and we denote by $\partial F_B$ the topological boundary of $F_B$ in $X$,
so that $(X\setminus \inte(F_B),\partial F_B)$ is homeomorphic to $(M,\partial M)$. 

By continuity, there exists $\varepsilon_0>0$ such that the following condition holds:
for every $i=1,\ldots,s$, 
if $K_i=\sigma^{-1}_i(B)$, then $\sigma_i(\Delta^n\setminus \inte(Q_i))\subseteq \inte(F_B)$, where $Q_i\in \Omega(n,\varepsilon_0)$ is obtained as above by removing from $\Delta^n$  
a neighbourhood of $K_i$. For every $i=1,\ldots,s$ let now $z_{Q_i}$ be the fundamental cycle for $Q_i$ defined above, and set
$$
z_i=(\sigma_i)_*(z_{Q_i})\in C_n(X\setminus B)\, ,\quad z=\sum_{i=1}^s a_iz_i\ .
$$
Using that $z^\calM$ is a marked cycle and the properties of the $z_{Q_i}$ it is not difficult to show that $\partial z\in C_n\left(F_B\setminus B\right)$. In particular,
$z$ is a relative cycle in $C_n\left(X\setminus B,F_B\setminus B\right)$. By retracting each $F_b\setminus \{b\}$ onto $\partial F_b$ we can construct a homotopy equivalence
of pairs 
$$r\colon \left(X\setminus B,F_B\setminus B\right)\to \left(X\setminus \inte(F_B),\partial F_B\right)\cong (M,\partial M)\ .
$$
We claim that the chain $r_*(z)$ is a relative fundamental cycle for $M$. Indeed,  $z^\calM$ is an ideal fundamental cycle, so its image in $H_n(X,B)$
has local degree equal to one at every point $x\in X\setminus B$ (see Remark~\ref{local:degree}). Using this and the fact that every $z_{Q_i}$ is a relative fundamental cycle for $Q_i$,
we deduce that also $z$ has local degree equal to one at every point $x\in X\setminus F_B$. Then, the same holds true for $r_*(z)$, and this 
shows that $r_*(z)$ is a fundamental class for $M$. Therefore,
\begin{align*}
\|M\| &\leq \|r_*(z)\|_1\leq \|z\|_1=\left\|\sum_{i=1}^s a_i(\sigma_i)_*(z_{Q_i})\right\|_1\\ &\leq \sum_{i=1}^s |a_i|\cdot \|z_{Q_i}\|_1\leq
K_n\sum_{i=1}^s |a_i|=K_n\cdot \|z^\calM\|_1
\end{align*}
By taking the infimum over all possible ideal fundamental cycles we get
$$
\|M\|\leq K_n\isv{M}\ ,
$$
which concludes the proof.
\end{proof}

By exploiting the techinques introduced in the proof of Theorem~\ref{main-in1} we may show that the $\ell^1$-norm of a marked homology class
can be computed by looking only at marked chains whose simplices have all their vertices in $B$:

\begin{prop}\label{verticesinB}
Let $(X,B)$ be a marked space such that $H_0(X,B)=0$ (i.e.~$B$ intersects every path connected component of $X$), and let $i\in\mathbb{N}$.
Then, for every $\alpha\in H_i^\calM(X,B)$ and every $\varepsilon>0$ there exists a marked cycle $z\in C_i^\calM(X,B)$ satisfying the following properties:
\begin{enumerate}
\item
$[z]=\alpha$ in $H_i^\calM(X,B)$ and $\|c\|_1\leq \|\alpha\|_1+\varepsilon$;
\item
all the vertices of every singular simplex appearing in $z$ lie in $B$.
\end{enumerate}
\end{prop}
\begin{proof}
Let $z\in C_i^\calM(X,B)$ be an admissible cycle. In order to prove the proposition, 
it is sufficient to show that $z$ is homologous to an admissible cycle $z'\in C_i^\calM(X,B)$ such that $\|z'\|_1\leq \|z\|_1$
and all the vertices of every singular simplex appearing in $z'$ lie in $B$.


Let $z = \sum_{j = 1}^{k} a_j \sigma_j$.
We exploit the  notation introduced in the proof of Theorem~\ref{main-in1} (with $(M,\partial M)$ replaced by $(X,B)$), 
and we denote by $P$ the $\Delta$-complex associated to $z$ (which is obtained by gluing $k$ copies of the standard simplex $\Delta^i$),
and by $f \colon P \rightarrow X$ the continuous map obtained by gluing the maps $\sigma_1, \cdots, \sigma_k$.
 Moreover, we denote by $\hat{\sigma}_j\colon \Delta^i\to P$ the characteristic map of the $j$-th copy of $\Delta^i$ in $P$
 (see the proof of  Theorem~\ref{main-in1}), and we set
 $z_P = \sum_{j = 1}^{k} a_j \hat{\sigma}_j \in \, C_i(P)$.
 
 By construction, $z_P$ is a relative cycle in $C_i(P, f^{-1}(B))$, and $z = f_*(z_P)$. Moreover,
since every $\sigma_j$ is admissible, the subset $f^{-1}(B)\subseteq P$ is  a subcomplex of $P$. 

We now aim to construct a homotopy of pairs $H \colon (P, f^{-1}(B)) \times I \rightarrow (X, B)$ between $f$ and a map $g$ such that 
every singular simplex appearing in $z' = g_*(z_P)$ is admissible and has all its vertices in $B$. 
We will then have  $[z'] = [z]$ in $H_i(X, B)$, hence in $H_i^{\calM}(X, B)$ thanks to Theorem \ref{main:basic:thm}.
Then the proposition will follow from the inequality $$\|z'\|_1=\|g_*(z_P)\|_1\leq \|z_P\|_1=\|z\|_1\ .$$

Let $v\in P^0\setminus f^{-1}(B)$ be a vertex of $P$ which is not sent to $B$ by $f$. Since $H_0(X,B)=0$, there exists
a continuous path $\gamma_v\colon [0,1]\to X$ joining $v$ with a point in $B$. Moreover, since every point of $B$ has a quasiconical neighbourhood
in $X$, we can safely assume that $\gamma_v^{-1}(B)=\{1\}$, i.e.~the path $\gamma_v$ hits $B$ only at its endpoint.
Let us now consider the homotopy 
$$\widehat{H} \colon (f^{-1}(B) \cup P^0) \times I \rightarrow X\ ,$$ $$\widehat{H}(x, t) = \begin{cases} f(x) &\mbox{ if } x \in \, f^{-1}(B) \\ \gamma_x(t) &\mbox{ if }  x \in \, P^0 \setminus f^{-1}(B)\end{cases}$$
($\widehat{H}$ is indeed continuous since 
both $f^{-1}(B)$ and $P^0 \setminus f^{-1}(B)$ are closed in $P$).
Since $f^{-1}(B)\cup P^0$ is a subcomplex of $P$, thanks to the Homotopy Extension Property for CW-pairs (see \cite[Proposition~0.16]{hatcher})
we can extend $\widehat{H}$ to a homotopy  $H \colon P \rightarrow X$. Moreover, since the homotopy $\widehat{H}$ is relative to $f^{-1}(B)$, we may assume
that $H$ is also relative to $f^{-1}(B)$. In particular, $H$ also defines a homotopy of pairs $H\colon (P,f^{-1}(B))\times I\to (X,B)$.
We then set $g=H(\cdot, 1)$, and we are left to show that every singular simplex appearing in $z' = g_*(z_P)$ is admissible and has all its vertices in $B$.
 
 The fact that every singular simplex appearing in $z' $ has all its vertices in $B$ readily follows from the construction of $g$.
 Moreover, in order to prove that each singular simplex appearing in $z'$ is admissible 
 it is sufficient to show that $g^{-1}(B)$ is a subcomplex of $P$. However,  from the definition of $\widehat{H}$
 and from the explicit description of its extension $H$ given in \cite[Proposition~0.16]{hatcher} we deduce that
 $g^{-1}(B)=f^{-1}(B)\cup P^0$. Since $f^{-1}(B)$ is a subcomplex of $P$, this concludes the proof.
\end{proof}

\subsection{Bounding mapping degrees}\label{bd:deg:sec}
Let us now turn to the proof of Theorem~\ref{bound:degree}. Let $f\colon (M,\partial M)\to (N,\partial N)$ be a map of pairs between
$n$-dimensional manifolds with boundary. We denote by $(X_M,B_M)$ (resp.~$(X_N,B_N)$) the marked space associated to $M$ (resp.~to $N$).

By Lemma~\ref{normal:lemma}, the map $f$ is homotopic (as a map of pair)
to a map $g\colon (M,\partial M)\to (N,\partial N)$ such that $g^{-1}(\partial N)=\partial M$. This condition ensures that
the map $\overline{g}\colon (X_M,B_M)\to (X_N,B_N)$ induced by $g$ is admissible. By looking at the commutative diagram
$$
\xymatrix{
H_n(M,\partial M) \ar[r]^{H_n(g)} \ar[d] & H_n(N,\partial N)\ar[d] \\
H_n(X_M,B_M) \ar[r]^{H_n(\overline{g})} & H_n(X_N,B_N) \\
H_n^\calM(X_M,B_M)\ar[u] \ar[r]^{H_n^\calM(\overline{g})} & H_n^\calM(X_N,B_N)\ar[u]
}
$$
we easily deduce that 
$$
H_n^\calM(\overline{g})([M,\partial M]^\calM)=\deg (g)\cdot [N,\partial N]^\calM= \deg (f)\cdot [N,\partial N]^\calM\ .
$$
Moreover,  the operator
$H_n^\calM(\overline{g})$ is norm non-increasing, since it is induced by a map on marked chains which sends every single admissible simplex to a single admissible simplex.
 Thus
\begin{align*}
\isv{M}&=\|[M,\partial M]^\calM\|_1\geq \|H_n^\calM(\overline{g})([M,\partial M]^\calM)\|_1\\ &=\|\deg (f)\cdot [N,\partial N]^\calM\|_1=|\deg (f)| \cdot \isv{N}\ .
\end{align*}
This concludes the proof of Theorem~\ref{bound:degree}.

\section{The universal covering of a marked space}\label{universal:subsec}
When computing the (co)homology of a space, it is often useful to work with (co)invariant (co)chains on coverings. In
order to implement this strategy in the context of marked (co)homology, we first need to define coverings of marked spaces.
For the sake of simplicity (and since this will be sufficient for our purposes) we only define the universal covering of a marked space, even though our construction
may be easily adapted to define a more general notion of covering between marked spaces. 

Let $(X,B)$ be a marked space, and assume for simplicity that $Y=X\setminus B$ admits a universal covering $q\colon \widetilde{Y}\to Y$. Then we define
the universal covering $(\widetilde{X},\widetilde{B})$ of $(X,B)$ as follows. Let $\{F_b\}_{b\in B}$ be a collection of disjoint quasiconical closed neighbourhoods
of the points of $B$. 
We pick a set $\widetilde{B}$ endowed with a fixed bijection with the set of connected components of $q^{-1}(\bigcup_{b\in B} (F_b\setminus \{b\}))$ and we define 
$\widetilde{X}=\widetilde{Y}\cup \widetilde{B}$. Observe that the covering projection $q\colon \widetilde{Y}\to Y$ extends to a map $\pi\colon \widetilde{X}\to X$
sending to $b\in B$ all the points of $\widetilde{B}$ corresponding to a 
component of $q^{-1}(F_b)$.

In order to turn the pair $(\widetilde{X},\widetilde{B})$ into a marked space, we endow $\widetilde{X}$ with the unique topology such that the following conditions hold:
the subset $\widetilde{Y}=\widetilde{X}\setminus \widetilde{B}$ is open in $\widetilde{X}$ and inherits from $\widetilde{X}$ the topology of $\widetilde{Y}$ as total space of the universal covering of $Y$; 
if $\widetilde{b}\in \widetilde{B}$ corresponds to a component $\widetilde{F}$ of $q^{-1}(F_b\setminus \{b\})$, then a basis of neighbourhoods of $\widetilde{b}$
in $\widetilde{X}$ is given by the collection $\{\widetilde{F}\cap \pi^{-1}(U)\}$, as $U$ varies in a basis of neighbourhoods of $b$ in $X$.

\begin{lemma}\label{univ:marked:lemma}
The pair $(\widetilde{X},\widetilde{B})$ is indeed a marked space, and the map $\pi\colon (\widetilde{X},\widetilde{B})\to (X,B)$ is admissible.
\end{lemma}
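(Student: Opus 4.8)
The plan is to verify, in order, the three things at stake: that the topology prescribed on $\widetilde{X}$ is well defined (which immediately gives that $\widetilde{B}$ is closed), that $\pi$ is admissible, and that every point of $\widetilde{B}$ has a closed quasiconical neighbourhood, the whole family being pairwise disjoint. Throughout I would fix, for each $b\in B$, a homotopy $H_b\colon F_b\times[0,1]\to F_b$ witnessing that $F_b$ is a quasicone with apex $b$; by the quasicone condition its restriction gives a homotopy $h_b\colon (F_b\setminus\{b\})\times[0,1)\to F_b\setminus\{b\}$ which is the identity at time $0$. I would also record at the outset that in all cases of interest the $F_b$ may be taken locally connected (this is automatic when $(X,B)$ is associated to a manifold), a mild hypothesis used only for the covering-space arguments below.

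First I would make the topology explicit by giving a basis: the open subsets of $\widetilde{Y}$ (with its topology as total space of the universal covering of $Y$), together with the sets $V_{\widetilde{b},U}=\{\widetilde{b}\}\cup(\widetilde{F}_{\widetilde{b}}\cap\pi^{-1}(U))$, where $\widetilde{b}\in\widetilde{B}$ lies over $b$, $\widetilde{F}_{\widetilde{b}}$ is the component of $q^{-1}(F_b\setminus\{b\})$ labelled by $\widetilde{b}$, and $U$ ranges over a neighbourhood basis of $b$ in $X$ with $U\subseteq\inte(F_b)$. Checking the basis axioms is routine; the only point worth stressing is that $V_{\widetilde{b},U}$ and $V_{\widetilde{b}',U'}$ are in fact disjoint when $\widetilde{b}\neq\widetilde{b}'$, since distinct components of $q^{-1}(F_b\setminus\{b\})$ are disjoint and, for $b\neq b'$, the sets $F_b$ and $F_{b'}$ (hence their $q$-preimages) are disjoint by construction. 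This topology makes $\widetilde{Y}$ open in $\widetilde{X}$ with exactly the covering topology, so $\widetilde{B}=\widetilde{X}\setminus\widetilde{Y}$ is closed, which is condition~(1) of Definition~\ref{marked:defn}. Admissibility of $\pi$ is then short: $\pi|_{\widetilde{Y}}=q$ surjects onto $Y$ and $\pi(\widetilde{B})\subseteq B$, so $\pi^{-1}(B)=\widetilde{B}$; and for $W\subseteq X$ open, $\pi^{-1}(W)\cap\widetilde{Y}=q^{-1}(W\cap Y)$ is open in $\widetilde{Y}$, while each $\widetilde{b}\in\pi^{-1}(W)$ has a basic neighbourhood $V_{\widetilde{b},U}$ with $U\subseteq W$, so $\pi^{-1}(W)$ is open.

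For condition~(2) I would put $\widehat{F}_{\widetilde{b}}=\{\widetilde{b}\}\cup\widetilde{F}_{\widetilde{b}}$. It is a neighbourhood of $\widetilde{b}$ since it contains $V_{\widetilde{b},\inte(F_b)}$, and it is closed in $\widetilde{X}$: the component $\widetilde{F}_{\widetilde{b}}$ is closed in $\widetilde{Y}$ (being a connected component of $q^{-1}(F_b\setminus\{b\})$, which is closed in $\widetilde{Y}$ because $F_b$ is closed in $X$), and any $\widetilde{b}'\neq\widetilde{b}$ has a small basic neighbourhood missing both $\widetilde{b}$ and $\widetilde{F}_{\widetilde{b}}$; the same disjointness makes $\{\widehat{F}_{\widetilde{b}}\}_{\widetilde{b}\in\widetilde{B}}$ pairwise disjoint. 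To exhibit $\widehat{F}_{\widetilde{b}}$ as a quasicone with apex $\widetilde{b}$, set $C=q(\widetilde{F}_{\widetilde{b}})$; by local connectedness this is a connected component of $F_b\setminus\{b\}$, the restriction $q_0:=q|_{\widetilde{F}_{\widetilde{b}}}\colon\widetilde{F}_{\widetilde{b}}\to C$ is a covering, and $h_b$ restricts to a homotopy $h\colon C\times[0,1)\to C$ (its image is connected and contains $C$, hence equals $C$). I would then lift $h\circ(q_0\times\mathrm{id})\colon\widetilde{F}_{\widetilde{b}}\times[0,1)\to C$, which at time $0$ equals $q_0$ and is therefore lifted by $\mathrm{id}_{\widetilde{F}_{\widetilde{b}}}$, through the covering $q_0$ (using homotopy lifting on each $[0,1-1/n]$ and gluing the unique lifts), obtaining $\widetilde{h}\colon\widetilde{F}_{\widetilde{b}}\times[0,1)\to\widetilde{F}_{\widetilde{b}}$. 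Extending by $\widetilde{H}(\cdot,1)\equiv\widetilde{b}$ and $\widetilde{H}(\widetilde{b},\cdot)\equiv\widetilde{b}$ yields a map $\widetilde{H}\colon\widehat{F}_{\widetilde{b}}\times[0,1]\to\widehat{F}_{\widetilde{b}}$ that is the identity at time $0$ and constant at time $1$; and since $\widetilde{h}$ takes values in $\widetilde{F}_{\widetilde{b}}$, which does not contain $\widetilde{b}$, one gets $\widetilde{H}^{-1}(\{\widetilde{b}\})=(\{\widetilde{b}\}\times[0,1])\cup(\widehat{F}_{\widetilde{b}}\times\{1\})$, exactly as the definition of quasicone demands.

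The main obstacle, and the only place where the particular topology on $\widetilde{X}$ is really needed, is the continuity of $\widetilde{H}$ at the new apex, i.e.\ at points $(\widetilde{b},t)$ and $(x,1)$ with $x\in\widetilde{F}_{\widetilde{b}}$; away from these $\widetilde{H}$ is manifestly continuous. The key is the identity $\pi\circ\widetilde{H}=H_b\circ(q\times\mathrm{id})$ on $\widetilde{F}_{\widetilde{b}}\times[0,1)$, which lets one pull back a basic neighbourhood $\{\widetilde{b}\}\cup(\widetilde{F}_{\widetilde{b}}\cap\pi^{-1}(U))$ of $\widetilde{b}$ to a neighbourhood of the point in question using nothing more than the continuity of $H_b$ near $b$: at $(x,1)$ one applies continuity of $H_b$ at $(q_0(x),1)$, and at $(\widetilde{b},t)$ one invokes the tube lemma (compactness of $[0,1]$, together with $H_b(b,\cdot)\equiv b$) to find a neighbourhood $N\ni b$ with $H_b(N\times[0,1])\subseteq U$. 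This is precisely the subtlety that dictates the particular choice of topology on $\widetilde{X}$. Granting it, both conditions of Definition~\ref{marked:defn} hold, so $(\widetilde{X},\widetilde{B})$ is a marked space, and the continuous map $\pi$ with $\pi^{-1}(B)=\widetilde{B}$ is admissible.
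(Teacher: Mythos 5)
Your proof is correct and follows essentially the same route as the paper: lift the quasicone homotopy $H_b$ restricted to $(F_b\setminus\{b\})\times[0,1)$ through the covering to the relevant component $\widetilde{F}_{\widetilde{b}}$, extend by the constant map at the apex, and check the quasicone conditions. You simply spell out the points the paper dismisses as routine (the basis for the topology, continuity of $\widetilde{H}$ at the apex via the tube lemma, disjointness, admissibility of $\pi$); your local-connectedness caveat is avoidable by lifting through $q\colon\widetilde{Y}\to Y$ itself and confining the image to $\widetilde{F}_{\widetilde{b}}$ by connectedness, as the paper implicitly does.
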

\begin{proof}
Let us check that every $\widetilde{b}\in\widetilde{B}$ admits a quasiconical neighbourhood in $\widetilde{X}$. Let $b=\pi(\widetilde{b})$ and let $F_b$ be a quasiconical
neighbourhood of $b$ in $X$. Let also $H\colon F_b\times [0,1]\to F_b$ be a contracting homotopy as in Definition~\ref{coarse:cone}, and denote by 
$$H'\colon \left(F_b\setminus \{b\}\right)\times [0,1)\to F_b\setminus \{b\}
$$
the restriction of $H'$ to $\left(F_b\setminus \{b\}\right)\times [0,1)$ (which is well defined thanks to the properties of $H$). Let also $\widetilde{F}\subseteq \widetilde{X}\setminus \widetilde{B}$ be the connected
component of $q^{-1}(F_b\setminus\{b\})$ corresponding to $\widetilde{b}$.
Covering theory ensures that the homotopy $H'$ lifts to a homotopy 
$$
\widetilde{H}'\colon \widetilde{F}\times [0,1)\to \widetilde{F}
$$
such that $\widetilde{H}'(x,0)=x$ for every $x\in\widetilde{F}$. We now set $\widetilde{F}_{\widetilde{b}}=\widetilde{F}\cup \{\widetilde{b}\}$, and we extend $\widetilde{H}'$
to a homotopy 
$$
\widetilde{H}\colon \widetilde{F}_{\widetilde{b}}\times [0,1]\to \widetilde{F}_{\widetilde{b}}
$$
by setting $\widetilde{H}(x,t)=\widetilde{b}$ whenever $x=\widetilde{b}$ or $t=1$. 
It is now easy to check that the map $\widetilde{H}$ is continuous and satisfies the requirements of Definition~\ref{coarse:cone}, hence $\widetilde{F}_{\widetilde{b}}$ is a closed
quasiconical neuighbourhood of $\widetilde{b}$ in $\widetilde{X}$. As $\widetilde{b}$ varies in $\widetilde{B}$, the construction just described provides disjoint
sets, and this concludes the proof that $(\widetilde{X},\widetilde{B})$ is a marked space.

The fact that $\pi^{-1}(B)=\widetilde{B}$ is obvious, while the continuity of $\pi$ readily follows from the definitions. Thus $\pi$ is admissible, whence the conclusion.
\end{proof}

 In general, the map $\pi\colon \widetilde{X}\to X$ is no longer a covering (in fact,
it is a covering if and only if the image  of $\pi_1(F_b\setminus\{b\})$ into $\pi_1(X\setminus B)$ 
is trivial for every $b\in B$).


\begin{rem}\label{universal:associated:rem}
Let $M$ be a compact manifold with boundary with associated marked space $(X,B)$. Let  $f\colon \widetilde{M}\to M$ be the universal covering of $M$, and let
$p_{\widetilde{M}}\colon (\widetilde{M},\partial \widetilde{M})\to (X_{\widetilde{M}},B_{\widetilde{M}})$
be the natural projection on the associated marked space. It is very natural to ask whether $(X_{\widetilde{M}},B_{\widetilde{M}} )$ may be identified with the universal covering
$(\widetilde{X},\widetilde{B})$ of $(X,B)$.

First observe that the identification 
$$\widetilde{M}\setminus \partial \widetilde{M}= \widetilde{M\setminus \partial M}\cong
\widetilde{X\setminus B}$$
extends to a projection ${p'}_{\widetilde{M}}\colon \widetilde{M}\to \widetilde{X}$ which is continuous and 
sends every connected component of $\partial \widetilde{M}$ to a single point in $\widetilde{B}$. As a consequence, there exists a (unique) map
$\alpha\colon X_{\widetilde{M}}\to \widetilde{X}$ such that the following diagram commutes:
$$
\xymatrix{
\widetilde{M} \ar[rr]^-{{p_{\widetilde{M}}}} \ar[dr]_-{{p'}_{\widetilde{M}}} & & X_{\widetilde{M}} \ar[ld]^-\alpha\\
& \widetilde{X}\ . &
}
$$
Since $X_{\widetilde{M}} $ is endowed with the quotient topology, the map $\alpha$ is continuous. Moreover, it is easy to check that it is bijective. However,
we now show that $\alpha$ is \emph{not} a homeomorphism if $\partial \widetilde{M}$ contains a non-compact connected component $\partial_0\widetilde{M}$. 

Let
 $\widetilde{b}=p_{\widetilde{M}}(\partial_0\widetilde{M})\in B_{\widetilde{M}}$ 
be the point corresponding to $\partial_0\widetilde{M}$. We claim that $\widetilde{b}$ does not admit a countable basis of neighbourhoods in $X_{\widetilde{M}}$.
In fact, 
let $\{U_n\}_{n\in\mathbb{N}}$ be a countable collection of neighbourhoods of $\widetilde{b}$ in $X_{\widetilde{M}}$. Let $d$ be a distance inducing the topology of $\widetilde{M}$ 
(every topological manifold is metrizable), and choose a diverging sequence $\{x_n\}_{n\in\mathbb{N}}$ in $\partial_0\widetilde{M}$. 
For every $n\in\mathbb{N}$ there exists $\varepsilon_n>0$ such that the $d$-ball $B(x_n,\varepsilon_n)$ is contained in $p_{\widetilde{M}}^{-1}(U_n)$.
Using that the sequence $\{x_n\}_{n\in\mathbb{N}}$ diverges, it is not difficult to construct an open neighbourhood $V$ of $\partial_0\widetilde{M}$ in $\widetilde{M}$ such that 
$V$ does not contain $B(x_n,\varepsilon_n)$ for every $n\in\mathbb{N}$. The projection $p_{\widetilde{M}}(V)$ of $V$ is now an open neighbourhood
of $\widetilde{b}$ in $X_{\widetilde{M}}$ which does not contain any of the $U_n$, $n\in\mathbb{N}$. This shows that  $\widetilde{b}$ does not admit a countable basis of neighbourhoods in $X_{\widetilde{M}}$.

In order to show that $\alpha$ is not a homeomorphism it is now sufficient to observe that $\alpha(\widetilde{b})$ has a countable basis of neighbourhoods in $\widetilde{X}$.
In fact,
let $b=\pi(\alpha(\widetilde{b}))$ be the projection of $\alpha(\widetilde{b})$ under the covering of marked spaces $\pi\colon \widetilde{X}\to X$
introduced above. It follows from the very definition of the topology of $\widetilde{X}$ that a basis of neighbourhoods at $\alpha(\widetilde{b})$ is given by the set $\{\pi^{-1}(U)\}$ as $U$ varies
in a basis of neighbourhoods of $b$ in $X$. Now it is easy to check that $X$ is second countable (for example, it is metrizable and compact), hence $b$ has a countable
basis of neighbourhoods in $X$. This implies in turn that $\alpha(\widetilde{b})$ has a countable
basis of neighbourhoods in $\widetilde{X}$, thus concluding the proof that $\alpha$ is not a homeomorphism.

This fact may seem a bit annoying at first sight, but
our choice for the definition of the topology of $\widetilde{X}$ 
 allows us to lift admissible simplices from $X$ to its covering (see Lemma~\ref{lift:lemma} and Remark~\ref{topology:on:cones:rem}).
\end{rem}

\subsection{Lifting admissible simplices}
Let $Y=X\setminus B$ be as before, and
let $\Gamma$ denote the automorphism group of the universal covering $q\colon \widetilde{Y}\to Y$. It is immediate to check that 
every element of $\Gamma$ extends to  an admissible map of $(\widetilde{X},\widetilde{B})$ into itself.
As it is customary for ordinary (co)homology, we would like to compute the marked (co)homology of $X$ by looking at (co)invariant (co)chains on the universal covering.
To this aim we need the following important: 

\begin{lemma}\label{lift:lemma}
Let $\pi\colon \widetilde{X}\to X$ be the universal covering constructed above, and let
$\sigma\colon \Delta^i\to X$ be an admissible singular simplex. Then, there exists an admissible singular simplex $\widetilde{\sigma}\colon \Delta^i\to \widetilde{X}$ such that
$\pi\circ\widetilde{\sigma}=\sigma$. Moreover, if $\widetilde{\sigma}'\colon \Delta^i\to\widetilde{X}$ is any admissible singular simplex
such that  $\pi\circ\widetilde{\sigma}'=\pi\circ\widetilde{\sigma}=\sigma$, then there exists an element $\gamma\in\Gamma$ such that 
$\widetilde{\sigma}'=\gamma\circ \widetilde{\sigma}$.
\end{lemma}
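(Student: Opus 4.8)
The plan is to establish the lifting lemma by separating the two cases according to whether the admissible simplex $\sigma\colon \Delta^i\to X$ has image entirely contained in $B$ or not, and then to prove the uniqueness statement by a density argument. First I would dispose of the trivial case: if $\sigma(\Delta^i)\subseteq B$, then since $B$ is discrete $\sigma$ is constant at some $b\in B$, so we simply pick any $\widetilde b\in\pi^{-1}(b)$ and let $\widetilde\sigma$ be constant at $\widetilde b$; uniqueness up to $\Gamma$ here just says that $\Gamma$ acts transitively on the fiber $\pi^{-1}(b)$, which holds because the components of $q^{-1}(F_b\setminus\{b\})$ are permuted transitively by the deck group (their union $q^{-1}(F_b\setminus\{b\})$ is the full preimage of a connected set and $\Gamma$ acts transitively on the fibers of $q$).

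For the main case, set $K=\sigma^{-1}(B)$ and assume $A=\Delta^i\setminus K$ is nonempty. Since $\sigma$ is admissible, $K$ is a subcomplex of $\Delta^i$, so $A$ is an open convex (hence simply connected) subset of $\Delta^i$ containing the interior $\Delta^i\setminus\partial\Delta^i$. The restriction of $\pi$ to $\widetilde X\setminus\widetilde B=\widetilde Y$ is the universal covering $q$ of $Y=X\setminus B$, so by the standard lifting criterion the map $\sigma|_A\colon A\to Y$ lifts to a continuous $\beta\colon A\to\widetilde Y$. It remains to extend $\beta$ continuously over each connected component $K_0$ of $K$. Fix such a $K_0$; by discreteness of $B$ there is $b\in B$ with $\sigma(K_0)=\{b\}$, and by continuity we can choose a neighborhood $U$ of $K_0$ in $\Delta^i$ with $\sigma(U)\subseteq F_b$ and with $U\cap A$ path connected (shrink $U$, using that $A$ is dense and locally path connected near $K_0$). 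Then $\beta(U\cap A)$ lies in $q^{-1}(F_b\setminus\{b\})=\bigsqcup_{\widetilde b\in\pi^{-1}(b)}\widetilde F_{\widetilde b}$, and since $U\cap A$ is connected it lies in exactly one $\widetilde F_{\widetilde b}$; we extend $\beta$ by setting $\widetilde\sigma=\widetilde b$ on all of $K_0$. Checking continuity of the resulting $\widetilde\sigma$ at points of $K$ uses precisely the description of the topology of $\widetilde X$: a basic neighborhood of $\widetilde b$ is $\widetilde F_{\widetilde b}\cap\pi^{-1}(V)$ for $V$ a neighborhood of $b$, and by continuity of $\sigma$ there is a small neighborhood $W\subseteq U$ of $K_0$ with $\sigma(W)\subseteq V$, whence $\widetilde\sigma(W)\subseteq\widetilde F_{\widetilde b}\cap\pi^{-1}(V)$. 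Admissibility of $\widetilde\sigma$ is immediate since $\widetilde\sigma^{-1}(\widetilde B)=\sigma^{-1}(B)=K$ is a subcomplex.

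For uniqueness, suppose $\widetilde\sigma'$ is another admissible lift of $\sigma$. Then $\widetilde\sigma'|_A$ and $\widetilde\sigma|_A$ both lift $\sigma|_A$ through the covering $q$, and since $A$ is connected classical covering theory gives a unique $\gamma\in\Gamma$ with $\widetilde\sigma'|_A=\gamma\circ\widetilde\sigma|_A$. To promote this to an equality on all of $\Delta^i$, note that $A$ is dense in $\Delta^i$, and that in $X$ (hence in $\widetilde X$) any $b\in B$ and any $x\ne b$ have disjoint neighborhoods — this is the Hausdorff-type separation built into the quasicone structure — so the continuous maps $\widetilde\sigma'$ and $\gamma\circ\widetilde\sigma$, agreeing on the dense set $A$, must agree everywhere.

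I expect the main obstacle to be the extension-and-continuity step over the components of $K$: one must be careful that the chosen neighborhood $U$ can simultaneously be made small enough that $\sigma(U)\subseteq F_b$, have $U\cap A$ path connected so that $\beta(U\cap A)$ selects a single sheet $\widetilde F_{\widetilde b}$, and yield continuity of the extended map at every point of $K_0$, which is exactly where the coarse-cone topology on $\widetilde X$ (as opposed to the quotient topology) is essential — cf.\ Remark~\ref{topology:on:cones:rem}. The lifting of $\sigma|_A$ itself is routine, and the uniqueness argument is a short density argument; the delicate bookkeeping is all in matching the local behavior of $\widetilde\sigma$ near $K$ with the prescribed basis of neighborhoods of points of $\widetilde B$.
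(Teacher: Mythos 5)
Your proposal follows the paper's own proof essentially step for step: the same reduction to the constant case via discreteness of $B$ and transitivity of $\Gamma$ on fibers, the same lift of $\sigma|_A$ over the convex dense set $A=\Delta^i\setminus K$, the same extension over each component $K_0$ of $K$ by choosing a neighbourhood $U$ with $\sigma(U)\subseteq F_b$ and $U\cap A$ path connected so that $\beta$ selects a single sheet $\widetilde F_{\widetilde b}$, and the same uniqueness argument via density of $A$ plus the separation of points of $B$ from other points of $X$. The only (harmless) difference is that you spell out the continuity check at points of $K_0$ slightly more explicitly than the paper does; just make sure, as the paper does, that $U$ is also chosen disjoint from $K\setminus K_0$.
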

\begin{proof}
If $\sigma(\Delta^i)\subseteq B$, then by discreteness of $B$ we have $\sigma(\Delta^i)=\{b\}$ for some $b\in B$. We then choose an element
$\widetilde{b}$ in $\pi^{-1}(b)$ and define $\widetilde{\sigma}(x)=\widetilde{b}$ for every $x\in\Delta^i$. The conclusion follows from the fact that 
$\Gamma$ acts transitively on the fiber of $b$.

We may thus set $K=\sigma^{-1}(B)$ and 
suppose that $A=\Delta^i\setminus K$ is non-empty. Since $\sigma$ is admissible, the set $K$ is a subcomplex of $\Delta^i$, and $A$ is  a  convex subset of $\Delta^i$.
In particular, $A$ is simply connected. Since the restriction of ${\pi}$ to $\widetilde{X}\setminus \widetilde{B}$ is a classical covering, 
this implies that there exists a continuous lift $\beta\colon A\to \widetilde{X}$ of the restriction $\sigma|_A$. Let us fix as usual a family $F_b$, $b\in B$, of disjoint closed quasiconical
neighbourhoods of the points of $B$, and let $\widetilde{F}_{\widetilde{b}}$, $\widetilde{b}\in\widetilde{B}$, be the family of disjoint closed quasiconical
neighbourhoods of  points of $\widetilde{B}$ obtained by lifting the $F_b$ (see the proof of Lemma~\ref{univ:marked:lemma}).

Let $K_0$ be a connected component of $K$. Since $B$ is discrete, there exists $b\in B$ such that $\sigma (K_{0})=\{b\}$.
Since $\sigma$ is admissible and non-constant,
the subset $A$ contains $\Delta^i\setminus \partial\Delta^i$. In particular, every point $x\in K_0$ is an accumulation point for $A$. 
By continuity of $\sigma$, 
there exists an open neighbourhood $U$ of $K_0$ in $\Delta^i$ such that $U\cap (K\setminus K_0)=\emptyset$ and $\sigma(U)\subseteq F_b$. Moreover, we can choose $U$ so that
$U\setminus K_0=U\cap A$ is path connected. We now have 
$$
\beta(U\setminus K_0)\subseteq\pi^{-1}(\sigma(U\setminus K_0))\subseteq \pi^{-1}(F_b)=\bigsqcup_{\widetilde{b}\in\pi^{-1}(b)} \widetilde{F}_{\widetilde{b}}\ .
$$
Since $U\setminus K_0$ is connected, this shows that
there exists a unique $\widetilde{b}\in \pi^{-1}(B)$ such that $\beta(U\setminus K_0)\subseteq \widetilde{F}_{\widetilde{b}}$, and we set
$\widetilde{\sigma}(x)=\widetilde{b}$ for every $x\in K_0$. Using the definition of the topology of $\widetilde{X}$ it is not difficult to show that
$\widetilde{\sigma}\colon \Delta^i\to\widetilde{X}$
is continuous. By construction, it is also admissible, thus providing the desired lift of $\sigma$. 

Suppose now
that $\widetilde{\sigma}'\colon \Delta^i\to \widetilde{X}$ is an arbitrary lift of $\sigma$. The maps $\widetilde{\sigma}'|_A$ and
$\widetilde{\sigma}|_A$ both lift $\sigma|_A$, so it readily follows from classical covering theory that $\widetilde{\sigma}'|_A=\gamma\circ 
\widetilde{\sigma}|_A$ for some $\gamma\in\Gamma$. 
Observe now for every $b\in B$, $x\in X\setminus\{b\}$ there exist disjoint neighbourhoods of $b$ and $x$ in $X$. Therefore,
since $A$ is dense in $\Delta^i$ the singular simplices $\widetilde{\sigma}'$ and $\gamma\circ 
\widetilde{\sigma}$ coincide on the  whole of $\Delta^i$, and this concludes the proof.
\end{proof}

\begin{rem}\label{topology:on:cones:rem}
Let $M$ be a compact manifold with universal covering $\widetilde{M}$, and denote by $X_M$, $X_{\widetilde{M}}$ the marked spaces associated
to $M$ and $\widetilde{M}$, respectively. As observed in Remark~\ref{universal:associated:rem}, there exists a natural bijection between $\widetilde{X}$ and $X_{\widetilde{M}}$,
so it makes sense to ask whether Lemma~\ref{lift:lemma} would hold with $\widetilde{X}$ replaced by $X_{\widetilde{M}}$. The answer is negative in general.

For example, let $M=S^1\times [0,1]$, so that $\widetilde{M}=\R\times [0,1]$, and let $\sigma\colon \Delta^1\to \widetilde{M}$ be defined by
$\sigma(t)=(1/t,t)$ if $t\neq 0$ and $\sigma(0)=(0,0)$. Let also $p_{\widetilde{M}}\colon \widetilde{M}\to X_{\widetilde{M}}$ be the natural projection,
and $q\colon X_{\widetilde{M}}\to X_M$ the projection obtained by precomposing the map $\widetilde{X}\to X_M$ with the bijection
$X_{\widetilde{M}}\cong \widetilde{X}$. Then it is not difficult to prove that the simplex
$q\circ p_{\widetilde{M}}\circ \sigma \colon \Delta^1\to X$ is continuous, hence admissible. On the contrary, the map $p_{\widetilde{M}}\circ\sigma\colon \Delta^1\to X_{\widetilde{M}}$
is \emph{not} continuous: in fact, if $U=\{(t,y)\in\R\times [0,1],\, t>1\ \textrm{and}\ y<1/t\}\subseteq \widetilde{M}$, 
then the set $p_{\widetilde{M}}(U)$ is open in $X_{\widetilde{M}}$ and intersects the image of $p_{\widetilde{M}}\circ\sigma$ only at
the point $p_{\widetilde{M}}(\R\times \{0\})$, against the continuity of $p_{\widetilde{M}}\circ\sigma$. 
(On the other hand, the simplex $p_{\widetilde{M}}\circ\sigma\colon \Delta^1\to X_{\widetilde{M}}$ would be continuous if we endowed the set $X_{\widetilde{M}}$ with the topology inherited from the bijection
$X_{\widetilde{M}}\cong \widetilde{X}$.) Using this fact, one can easily show that the admissible simplex $q\circ p_{\widetilde{M}}\circ \sigma$ does not lift
to an admissible simplex with values in $X_{\widetilde{M}}$.
 \end{rem}

\section{Manifolds with amenable boundary}\label{amenable:sec}

Let $M$ be a manifold with boundary with associated marked space $(X,B)$. As usual, we assume that $M$ is compact, connected and oriented.
Denote by $\widetilde{M}$ the universal covering of $M$, and let us fix an identification of the fundamental group
of $M$ with the group $\Gamma$ of  automorphisms of the covering $\pi\colon \widetilde{M}\to M$.
Let $(X,B)$ be the marked space associated to $M$, and 
denote
by $(\widetilde{X},\widetilde{B})$ the universal covering of $(X,B)$ as a marked space.

\subsection{Proof of Theorem~\ref{amenable:thm}}
Let us now suppose that the fundamental group of every component of $\partial M$ is amenable. We would like to prove that
$\isv{M}=\|M\|$. We already know that $\isv{M}\leq \|M\|$, so we need to show the converse inequality $\|M\|\leq \isv{M}$. 
The duality principle for ordinary singular (co)homology 
(see e.g.~\cite{Loeh} or~\cite{miolibro})
implies that there exists a bounded cohomology class $\psi\in H^n_b(M,\partial M)$ such that 
$\|\psi\|_\infty\leq 1$ and $\langle \psi,[M,\partial M]\rangle =\|M\|$.

Let $\varepsilon>0$ be given.
Since the fundamental group of each component of $\partial M$ is amenable, by~\cite[Corollary 5.18]{miolibro} there exists a representative $f\in C^n_b(M,\partial M)$ of $\psi$ such that 
$\|f\|_\infty\leq \|\psi\|_\infty +\varepsilon$
which is \emph{special} in the following sense.
\begin{itemize}
\item
 Let ${\sigma},{\sigma}'\colon \Delta^n\to M$ be singular simplices which lift to
 maps  $\widetilde{\sigma},\widetilde{\sigma}'\colon \Delta^n\to \widetilde{M}$ such that,
for every $i=0,\ldots,n$, at least one of the following conditions holds:
 either $\widetilde{\sigma}(e_i)=\widetilde{\sigma}'(e_i)$, or $\widetilde{\sigma}(e_i)\neq\widetilde{\sigma}'(e_i)$ but ${\widetilde{\sigma}}(e_i)$ and $\widetilde{\sigma}'(e_i)$
belong to the same connected component of $\partial\widetilde{M}$,
where $e_0,\ldots,e_n$ are the vertices of the standard
$n$-simplex. Then $f({\sigma})=f({\sigma}')$.
\end{itemize}

We are now ready to construct a cocycle $f^\calM\in C^n_\calM(X,B)$ out of the cocycle $f$. 
Let us denote by $p'_{\widetilde{M}}\colon \widetilde{M}\to \widetilde{X}$ the projection described in 
Remark~\ref{universal:associated:rem}.

Let $\sigma\colon \Delta^n\to X$ be an admissible simplex, and let $\widetilde{\sigma}\colon \Delta^n\to \widetilde{X}$ be a lift
of $\sigma$, as described in Lemma~\ref{lift:lemma}. It may be the case that $\widetilde{\sigma}$ is not the projection on $\widetilde{X}$
of any singular simplex with values in  $\widetilde{M}$.
Nevertheless, we can arbitrarily choose a singular simplex $\widehat{\sigma}\colon \Delta^n\to \widetilde{M}$ such that
$\widetilde{\sigma}$ and $p'_{\widetilde{M}}\circ\widehat{\sigma}$ coincide on the vertices of $\Delta^n$.
We then set
$$
f^\calM(\sigma)=f(\pi \circ \widehat{\sigma})\ ,
$$
where $\pi \colon \widetilde{M} \rightarrow M$ is the universal covering projection.

The fact that $f$ is special implies that $f^\calM$ is well defined. Moreover,
it is readily seen that $f^\calM$ is a cocycle, and
$$
\|f^\calM\|_\infty\leq \|f\|_\infty\leq \|\psi\|_\infty +\varepsilon\leq 1+\varepsilon\ .
$$

Let $\psi^M\in H^n_\calM(X,B)$ be the class represented by $f^\calM$. We are now going to evaluate
$\psi^M$ on the ideal fundamental class of $M$. So let
$z\in C_n(M,\partial M)$ be a fundamental cycle for $M$. The proof of Theorem~\ref{main-in1} shows that
we can modify $z$ into an admissible fundamental cycle without
increasing its $\ell^1$-norm. 
Therefore, we may assume that the chain $z$ is admissible. 
As a consequence, the chain $z^\calM=p_*(z)\in C_n^\calM(X,B)$ is also admissible. By definition, it follows that
$z^\calM$ is an ideal fundamental cycle for $M$. Moreover, it readily follows from the definition of $f^\calM$ that
$f^\calM(z^\calM)=f(z)$.
We then have
$$
\langle \psi^\calM,[M,\partial M]^\calM\rangle=f^\calM(z^\calM)=f(z)=\langle \psi,[M,\partial M]\rangle=\|M\|\ .
$$
Moreover, 
$$
\|\psi^\calM\|_\infty\leq \|f^\calM\|_\infty\leq 1+\varepsilon\ .
$$
Therefore, by applying Proposition~\ref{duality:prop} with $\alpha=[M,\partial M]^\calM$ and $\varphi=\psi^\calM/(1+\varepsilon)$ we get 
$$
\isv{M}=\|[M,\partial M]^\calM\|_1\geq \frac{\langle \psi^M,[M,\partial M]^\calM\rangle}{1+\varepsilon}=\frac{\|M\|}{1+\varepsilon}\ .
$$
Since $\varepsilon$ is arbitrary, we thus have
$$
\isv{M}\geq \|M\|\ .
$$
This concludes the proof of Theorem~\ref{amenable:thm}.

\section{Hyperbolic manifolds with geodesic boundary}\label{hyperbolic:sec}
This section is devoted to the proof of Theorem~\ref{geodesic:boundary:thm}, which computes the ideal simplicial volume of  an infinite family of hyperbolic $3$-manifolds
with geodesic boundary. Mimicking the well-known computation of the classical simplicial volume for hyperbolic manifolds due to Gromov and Thurston,
we will first establish the lower bound on the ideal simplicial volume in terms of the Riemannian volume described in Theorem~\ref{lowerbound:thm}.
To this aim, rather than defining a proper straightening of simplices as in the classical case, we will directly exploit  the duality principle described in Proposition~\ref{duality:prop}.
In the $3$-dimensional case, building on a recent result on 
 the volumes of peculiar classes of (partially) truncated tetrahedra in hyperbolic space,
we will then show that the resulting lower bound is sharp for an infinite family of $3$-manifolds. 

\subsection{A geometric realization of the associated marked space}\label{geom:real}
We denote by $\calH^n$ the hyperboloid model of hyperbolic space $\matH^n$, i.e.~we set
$$
\calH^n=\{x\in \R^{n+1}\, |\, \langle x,x\rangle=-1,\, x_0>0\}\ ,
$$
where $$ \langle (x_0,\ldots,x_n),(y_0,\ldots,y_n)\rangle=-x_0y_0+\sum_{i=1}^n x_iy_i$$
is the Minkowsky scalar product. We also denote by $\calS^n$ the hyperboloid
$$
\calS^n=\{x\in \R^{n+1}\, |\, \langle x,x\rangle=1\}\ .
$$
For every element $x\in \calS^n$ we define the dual hyperplane $H(x)$ (resp.~dual half-space $H^+(x)$) of $\calH^n$ by setting
$$
H^+(x)=\{y\in \calH^n\, |\, \langle y,x\rangle \leq 0\}\, , $$
$$
H(x)=\partial H^+(x)=\{y\in \calH^n\, |\, \langle y,x\rangle = 0\}\ .
$$ 

Let now $M$ be a compact hyperbolic manifold with non-empty geodesic boundary. The universal covering $\widetilde{M}$ 
of $M$ is a convex subset of $\matH^n$ bounded by infinitely
many disjoint hyperplanes, and the fundamental group $\Gamma$ of $M$ acts by isometries on $\widetilde{M}$. In fact, every isometry of $\widetilde{M}$ is the restriction
of a unique element in $SO(n,1)$, so $\Gamma$ acts also on $\calS^n$. By taking the dual vectors of the boundary component of $\widetilde{M}$, one can construct
a $\Gamma$-invariant countable set $\widehat{D}\subseteq \calS^n$ such that 
for every
$\hat{q}\in \widehat{D}$ the dual hyperplane $H(q)\subseteq \calH^n$
is a connected component of $\partial \widetilde{M}$, and
$$
\widetilde{M}=\bigcap_{\hat{q}\in \widehat{D}} H^+(\hat{q})\ .
$$
Moreover, up to acting via an isometry of $\calH^n$, we may suppose that $(1,0,\ldots,0)$ belongs to $\inte(\widetilde{M})=\widetilde{M}\setminus\partial \widetilde{M}$, and this easily implies
that the set $\widehat{D}$ is contained in the half space $\{x_0>0\}$.
This allows us to fruitfully exploit the projective model of hyperbolic space: indeed,
let $U\cong \R^n$ be the affine chart defined by $U=\{[x]\in \mathbb{P}^n(\R)\, |\, x_0\neq 0\}$, consider the projection
 $\pi\colon \mathbb{R}^{n+1}\setminus\{0\}\to \mathbb{P}^n(\R)$ 
and let $P\subseteq U$ be the projective model of hyperbolic space, i.e.~set $P=\pi(\calH^n)$. Also set 
${D}=\pi(\widehat{D})\subseteq U$. With a slight abuse, we denote simply by $\widetilde{M}$
the set $\pi(\widetilde{M})\subseteq P$ and, for every $\hat{q}\in\widehat{D}$, we denote by $H^+(\hat{q})$ also the projection
$\pi(H^+(\hat{q}))\cap U\subseteq U$.

Let now $(X,B)$ be the marked space associated to $M$, and let $(\widetilde{X},\widetilde{B})$ be the universal covering of the marked space $(X,B)$.
Let us also set $Y=\inte(\widetilde{M})\cup D$. 
We will now fix the the following identification between $(\widetilde{X},\widetilde{B})$ and $(Y,D)$: recall from Remark~\ref{universal:associated:rem}
that there exists a natural bijection between $(\widetilde{X},\widetilde{B})$ and $(X_{\widetilde{M}},B_{\widetilde{M}})$; this induces in turn 
an identification between $\widetilde{X}\setminus \widetilde{B}$ and $X_{\widetilde{M}}\setminus B_{\widetilde{M}}\cong \inte(\widetilde{M})$; moreover,
$\widetilde{B}\cong B_{\widetilde{M}}$ admits a natural bijection with the set of connected components of $\widetilde{M}$, hence with $D$.
We will endow the pair $(Y,D)$ with the structure of marked space inherited from this identification. Observe that the identification
$(\widetilde{X},\widetilde{B})\cong (Y,D)$ is equivariant with respect to the action of $\G$ on $(Y,D)$.

\subsection{(Partially) truncated simplices}
Just as ideal simplices are the fundamental building blocks for cusped hyperbolic manifolds, truncated simplices may be exploited to construct hyperbolic manifolds 
with geodesic boundary. Just as the name suggests, truncated simplices are obtained by truncating Euclidean simplices in the chart $U\subseteq \mathbb{P}^n(\R)$
along the dual hyperplanes of their hyperideal vertices. Here is a precise definition:

\begin{defn}
Let $v_0,\ldots,v_{n}$ be points of $(U\setminus \partial P) \subseteq \mathbb{P}^n(\R)$, let $\Delta\subseteq U$ be
the convex hull of the $v_i$, and let $I\subseteq \{0,\ldots,n\}$ be the set of indices $i$ such that
$v_i\notin \overline{P}$. Also suppose that the following conditions hold:
\begin{enumerate}
\item For every $i,j=0,\ldots,n$, $i\neq j$, the straight segment joining $v_i$ with $v_j$ intersects $P$;
\item For every $i\in I$, let $\hat{v}_i\in \calS^n$ be the lift of $v_i$ 
such that $x_0(v_i)>0$. Then ${v}_j\in H^+(\hat{v}_i)$ for every $j\notin I$.
\end{enumerate}
Then the set
$$
\Delta^*=\Delta \cap \left(\bigcap_{i\in I} H^+(\hat{v}_i)\right)
$$
is a \emph{partially truncated simplex} (see Figure~\ref{truncated:fig}). 

The $v_i$ are the \emph{vertices} of $\Delta^*$. A vertex $v_i$ is \emph{finite} if  $v_i\in P$, and
\emph{ultraideal} if $v_i\in U\setminus \overline{P}$. The simplex $\Delta^*$ is \emph{degenerate} if its vertices are all contained in a hyperplane of $U$.

The intersection
of $\Delta^*$ with any face of $\Delta$ is an \emph{internal} face of $\Delta^*$. We say that an internal edge of $\Delta^*$ is \emph{fully hyperideal}
if both the vertices of the corresponding edge of $\Delta$ are hyperideal, and that $\Delta^*$ is fully truncated if every vertex of $\Delta$ is hyperideal.
For every $i\in I$, the set $\Delta^*\cap H^+(\hat{v}_i)$ is 
the \emph{truncation simplex} of $\Delta^*$. When $n=3$, truncation simplices
are usually called truncation triangles. 
The dihedral angle between a truncation simplex and any internal face adjacent to it is equal to $\pi/2$.  
(Partially) truncated tetrahedra are compact; in particular, they have finite volume.




\end{defn}

\begin{rem}
 In our definition of (partially) truncated tetrahedra we did not allow ideal vertices, nor (partially) ideal truncation simplices
 (which occur when the straight segment joining two hyperideal vertices is tangent to $P$ at a point in $\partial P$). Truncated tetrahedra with ideal vertices
 arise in the decomposition of hyperbolic $n$-manifolds with geodesic boundary and rank-$(n-1)$ cusps, while truncated tetrahedra 
 with ideal truncation simplices arise in the decomposition of hyperbolic manifolds with non-compact geodesic boundary. This choice allows us to avoid some technicalities, 
 and the study of compact truncated simplices is sufficient for our applications.
  \end{rem}

\begin{figure}
\begin{center}
\input{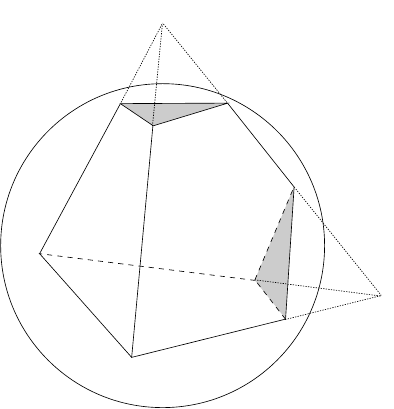_t}
\caption{A partially truncated tetrahedron. The vertices $v_0$ and $v_1$ are hyperideal, while $v_2$ and $v_3$ are finite.}\label{truncated:fig}
\end{center}
\end{figure}

For any geodesic segment $e$ in $P$ we denote by $L(e)$ the hyperbolic length of $e$.

\begin{defn}\label{defvl}
Let $\ell>0$. Then we set for any $n \in \mathbb{N} $
\begin{align*}
V_\ell^n=\sup \{  \vol (\Delta^*)\, |\, & \Delta^*\ \textrm{fully\ truncated } n-\mbox{simplex}\, , \\ & L(e)\geq \ell \ \textrm{for\ every\ internal\ edge\ of}\ \Delta^*\}\ .
\end{align*}
\end{defn}

\subsection{The marked volume form}
Let $M$ be a compact hyperbolic manifold with geodesic boundary. 
Recall from the introduction that $\ell(M)$ denotes the smallest return length of $M$,
i.e.~the length of the shortest
path with both endpoints on $\partial M$ which intersects $\partial M$ orthogonally at each of its endpoints (equivalently, 
it is the smallest distance between distinct boundary components of the universal covering of $M$).
Let $(X,B)$ be the marked space associated to $M$, and let $(Y,D)\cong (\widetilde{X},\widetilde{B})$  be the marked space
described in Subsection~\ref{geom:real}. We are going to define a marked volume form on $M$ by assigning to every admissible simplex $\sigma$
the signed volume of a partially truncated simplex having the same vertices as a lift of $\sigma$.

To this aim, let us first define the function
$$
\algvol\colon Y^{n+1}\to \R
$$
as follows. Take an element $(y_0,\ldots,y_n)\in Y^{n+1}$. 
If the $y_i$ are all contained in a hyperplane of $P$ (e.g.~if $y_i=y_j$ for some $i\neq j$), then we simply set $\algvol(y_0,\ldots,y_n)=0$.
Otherwise, it is easy to check that $\{y_0,\ldots,y_n\}$ is the set of vertices of a non-degenerate (partially) truncated simplex $\Delta^*\subseteq \widetilde{M}$, and we set
$$
\algvol(y_0,\ldots,y_n)=\varepsilon(y_0,\ldots,y_n)\cdot \vol(\Delta^*)\ ,
$$
where $\varepsilon(y_0,\ldots,y_n)=1$ (resp.~$\varepsilon(y_0,\ldots,y_n)=-1$) 
if the barycentric parametrization $$\Delta^n\to U\, ,\qquad (t_0,\ldots,t_n)\mapsto t_0y_0+\ldots t_ny_n $$of the Euclidean simplex 
with vertices $v_0,\ldots,v_n$ is orientation-preserving (resp.~orientation-reversing). Here we are endowing $U$ with the unique orientation which induces
the positive orientation on $\widetilde{M}\subseteq P\subseteq U$.

Observe now that there is an obvious isometric isomorphism between $C^n_\calM (X,B)$ and $C^n_\calM (Y,D)^\Gamma$.
We can then define the cochain $\omega\in C^n_\calM(Y,D)^\Gamma$ by setting
$$
\omega(\widetilde{\sigma})=\algvol(v_0,\ldots,v_n)\ ,
$$
where $v_i=\widetilde{\sigma}(e_i)$ and $e_i$ is the $i$-th vertex of the standard simplex $\Delta^n$. It is esay to check that
$\omega$ is a cocycle, hence we may denote by $[\omega]\in H^n_\calM(X,B)$ the coclass represented by $\omega$.

\begin{prop}\label{omegaclass}
 We have
 $$
 \langle [\omega],[M,\partial M]^\calM\rangle =\vol(M)\ .
 $$
 \end{prop}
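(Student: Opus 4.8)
The plan is to compare $\omega$ with the genuine Riemannian volume cocycle of $M$ by means of a straightening procedure, and then to evaluate the resulting straight cocycle on a fundamental cycle by a degree argument. Since $H^\calM_n(X,B;\R)\cong H_n(M,\partial M;\R)\cong\R$, it suffices to exhibit \emph{one} ideal fundamental cycle $z$ of $M$ with $\omega(z)=\vol(M)$; we obtain it by straightening an arbitrary ideal fundamental cycle.

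First I would introduce a straightening operator, working in the projective model and in the marked space $(Y,D)\cong(\widetilde{X},\widetilde{B})$ of Subsection~\ref{geom:real}. Given an admissible simplex $\sigma\colon\Delta^n\to Y$ with vertices $v_i=\sigma(e_i)\in Y=\inte(\widetilde{M})\cup D$, let $\strtil(\sigma)\colon\Delta^n\to Y$ be the natural degree-one parametrization of the (possibly partially) truncated simplex $\Delta^*$ spanned by $v_0,\dots,v_n$, with the convention that every face $F$ of $\Delta^n$ with $\sigma(F)\subseteq D$ is collapsed to the corresponding point of $D$; in particular $\strtil(\sigma)^{-1}(D)=\sigma^{-1}(D)$ as subcomplexes, so $\strtil(\sigma)$ is again admissible. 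One checks that $\strtil$ is a $\G$-equivariant chain endomorphism of $C_*^\calM(Y,D;\R)$ (it depends only on the vertices and on the subcomplex $\sigma^{-1}(D)$, and $\G$ acts by isometries), and that the straight-line homotopy between $\sigma$ and $\strtil(\sigma)$ --- carried out in the hyperboloid after lifting the $v_i$ to $\calH^n\cup\calS^n$ and collapsing the faces in $\sigma^{-1}(D)$ as above --- is admissible, its preimage of $D$ being exactly $\sigma^{-1}(D)\times[0,1]$. Exactly as in the proof of Theorem~\ref{homotopy-inv} this yields a $\G$-equivariant chain homotopy between $\strtil$ and the identity. Using Lemma~\ref{lift:lemma} to lift admissible simplices of $X$ to $Y$, both $\strtil$ and this homotopy descend to a chain map $\str_*\colon C_*^\calM(X,B;\R)\to C_*^\calM(X,B;\R)$ which is chain homotopic to the identity; in particular $\str_*$ fixes $[M,\partial M]^\calM$.

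Now fix any ideal fundamental cycle $z$ of $M$ and write $\str_*(z)=\sum_i a_i\,\str(\sigma_i)$. Since $\omega$ is a cocycle and $z$ is homologous to $\str_*(z)$, we have $\omega(z)=\omega(\str_*(z))=\sum_i a_i\,\algvol(v^i_0,\dots,v^i_n)$, where $v^i_0,\dots,v^i_n$ are the vertices of a lift of $\sigma_i$. Because each $\str(\sigma_i)$ is a degree-one parametrization of the truncated simplex $\Delta_i^*\subseteq\widetilde{M}$ with those vertices, one has $\algvol(v^i_0,\dots,v^i_n)=\int_{\Delta^n}\str(\sigma_i)^*dV$, where $dV$ is the Riemannian volume form, the pullback being taken on the open dense subset $\Delta^n\setminus\sigma_i^{-1}(B)$ which maps into $X\setminus B\cong\inte(M)$ (the collapsed faces and the truncation faces being negligible). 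On the other hand $\str_*(z)$ is again an ideal fundamental cycle, hence by Remark~\ref{local:degree} a relative cycle of local degree one at every point of $X\setminus B$; lifting its parametrized truncated simplices $\Delta_i^*$ back to $M$, with their truncation faces now mapped onto $\partial M$, identifies $\str_*(z)$ with an honest chain $\widetilde w\in C_n(M,\partial M;\R)$ which has local degree one at every interior point of $M$, hence is a relative fundamental cycle. The classical fact that a closed top form integrated against a relative fundamental cycle returns its total integral (a Stokes/de Rham duality argument on the compact manifold $M$) then gives $\sum_i a_i\int_{\Delta^n}\str(\sigma_i)^*dV=\int_M dV=\vol(M)$, so that $\omega(z)=\vol(M)$, as required.

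The main obstacle is precisely the construction and analysis of the straightening operator in the marked setting: in the closed case one straightens by taking geodesic (equivalently, projective) convex combinations of the lifted vertices, but here that recipe fails, because the Euclidean simplex spanned by $v_0,\dots,v_n$ leaves $Y=\inte(\widetilde{M})\cup D$ through the truncation corners. One must instead parametrize the truncated simplex itself, collapsing onto points of $D$ the faces spanned by ultraideal vertices, and verify that this still yields an admissible, $\G$-equivariant chain map whose associated straight-line homotopy is admissible and whose simplices have the correct degree. Once these (delicate but essentially formal) points are settled, every remaining step is the marked analogue of a classical computation.
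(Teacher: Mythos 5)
Your overall strategy is sound in outline --- since the pairing only depends on the class $[\omega]$, it suffices to evaluate $\omega$ on a single well-chosen ideal fundamental cycle, and your concluding degree/integration argument (local degree one at a.e.\ point of $X\setminus B$, hence $\sum_i a_i\,\algvol(\cdot)=\int_M dV$) is correct. But your route is genuinely different from the paper's, and the step you yourself identify as ``the main obstacle'' is left as a gap rather than closed. The paper does \emph{not} straighten an arbitrary cycle: it directly exhibits one ideal fundamental cycle supported on fully truncated simplices, either by subdividing Kojima's canonical decomposition of $M$ into truncated polyhedra (inserting degenerate truncated simplices where needed to make the subdivision coherent) and alternating the parametrizations as in Subsection~\ref{isv:vs:c}, or by pushing forward $1/d$ times such a cycle from a degree-$d$ covering of $M$ that admits a genuine triangulation by fully truncated simplices. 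For such a cycle the identity $\omega(z_M)=\vol(M)$ is immediate from the definition of $\algvol$, and no straightening operator is ever needed. What your approach buys is generality (it would evaluate $\omega$ on \emph{every} fundamental cycle, which is closer to the classical Gromov--Thurston argument); what the paper's approach buys is the complete avoidance of the marked straightening, which is exactly the delicate point.

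Concretely, the gap is the assertion that the marked straightening $\strtil$ and its chain homotopy to the identity exist and are ``essentially formal.'' Several things must actually be verified there and are not: (i) the vertex tuple $(v_0,\dots,v_n)$ may be degenerate (e.g.\ with repeated vertices, or with a positive-dimensional face of $\Delta^n$ collapsed by $\sigma$ to a point of $D$), in which case there is no truncated simplex to parametrize and a separate convention is needed; (ii) for $\partial\circ\strtil=\strtil\circ\partial$ one needs the chosen parametrizations of truncated simplices to be compatible with all face inclusions, which requires a coherent (e.g.\ inductive) construction, not merely ``the natural degree-one parametrization''; (iii) the straight-line homotopy lands in $Y=\inte(\widetilde M)\cup D$ only because $\inte(\widetilde M)$ is geodesically convex and the collapsed faces are held fixed, and its continuity at points of $\sigma^{-1}(D)\times[0,1]$ must be checked against the quasicone topology on $Y$ constructed in Section~\ref{universal:subsec} (this is the very point at which naive conventions break down, cf.\ Remark~\ref{topology:on:cones:rem}); and (iv) even the claim that the convex hull of points of $Y$ gives a partially truncated simplex contained in $\widetilde M$ needs the verification of conditions (1)--(2) in the definition of truncated simplex. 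None of these is insurmountable, but until they are carried out your proof is a program rather than a proof, whereas the paper's explicit-cycle argument sidesteps all of them.
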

\begin{proof}
It is sufficient to exhibit 
an ideal fundamental cycle $z_M\in C_n^\calM(X,B)$ for which  $\omega(z_M)=\vol(M)$.

Kojima's canonical decomposition for hyperbolic manifolds with geodesic boundary~\cite{Kojima, Kojima2} shows that
$M$ can be decomposed into fully truncated polyhedra. Unfortunately, it is not clear whether these polyhedra  may be coherently subdivided to give
a triangulation of $M$ by non-degenerate truncated simplices. However, a decomposition of $M$ into possibly degenerate truncated simplices may be obtained by 
subdividing  Kojima's decomposition
and, if needed, by inserting a finite number of degenerate truncated simplices between the faces of the original polyhedra. One can finally obtain the desired
fundamental cycle $z_M$ by alternating the barycentric parametrizations of these simplices just as we did in Subsection~\ref{isv:vs:c}.
It is then obvious that $\omega(z_M)=\vol(M)$, and this concludes the proof.


\end{proof}

\subsection{Proof of Theorem~\ref{lowerbound:thm}}
We are now ready to conclude the proof of the lower bound on $\isv{M}$ in terms of the Riemannian volume of $M$. 
By Proposition~\ref{verticesinB}, for every $\epsilon>0$ we may find an ideal fundamental cycle $$z_M=\sum_{i=1}^k a_i\sigma_i \ \in\ C_n^\calM(X,B)$$
such that all the vertices of every $\sigma_i$ lie in $B$, and $\|z_M\|_1\leq \isv{M}+\varepsilon$. As a consequence, since
the length of any internal edge of any fully truncated simplex with vertices in $Y=\inte(\widetilde{M})\cup D$
is not smaller than $\ell(M)$, for every $i=0,\ldots, k$ we have $|\omega(\sigma_i)|\leq V_{\ell(M)}^{n}$. Therefore,

\begin{align*}
\vol(M) &=\omega(z_M)\leq \sum_{i=0}^k |a_i|\cdot |\omega(\sigma_i)|\leq V_{\ell(M)}^{n}\sum_{i=0}^k |a_i|=V_{\ell(M)}^{n}\|z_M\|_1\\ 
& \leq V_{\ell(M)}^{n} \left(\isv{M}+\varepsilon \right)\ .
\end{align*}

Since $\varepsilon$ is arbitrary, this concludes the proof of Theorem~\ref{lowerbound:thm}.

\subsection{The class $\calM_g$}

As in the introduction, let $\calM_g$, $g\geq 2$, be the class of
3-manifolds $M$ with boundary that admit an ideal triangulation by $g$ tetrahedra
and have Euler characteristic $\chi(M)=1-g$ (so $\chi(\bb M)=2-2g$).
We also denote by $\overline\calM_g$ the set
of hyperbolic $3$-manifolds $M$ with connected geodesic boundary such that 
$\chi(\bb M)=2-2g$. It turns out that elements of $\calM_g$ admit a geometric decomposition into \emph{regular}
truncated tetrahedra.

A fully truncated tetrahedron $\Delta$ is regular if any permutation of its vertices is realized by an isometry of the truncated tetrahdron.
Equivalently, $\Delta$ is regular if and only if the dihedral angles along its edges are all equal to each other, and this happens
if and only if the hyperbolic lengths of its internal edges are all equal to each other. 
Up to isometry, regular truncated tetrahedra are parametrized by their edge length (which may vary in $(0,+\infty)$, or by their dihedral angles (which may vary in $(0,\pi/3)$).
It is well known (see for example~\cite{Ush}) that, if $\theta(\ell)$ denotes the dihedral
angles along the internal edges of a regular truncated tetrahedron with edge lengths equal to $\ell$, then the map
$\ell\to\theta(\ell)$ is strictly increasing, and has limits
$$ \lim_{\ell \to 0^+} \theta(\ell)=0$$
(when the truncated tetrahedron is tending to a regular ideal hyperbolic octhedron), and
$$\lim_{\ell\to +\infty} \theta(\ell)=\pi/3$$
(when the truncated tetrahedron is tending to a regular ideal hyperbolic tetrahedron).


For every $g\geq 2$, we denote by $\ell_g$ the length of the internal edges of the regular truncated tetrahedron
with dihedral angles equal to $\pi/(3g)$. Moreover, for every $\ell>0$ we denote by $\Delta_\ell$ the (isometry class of the)
regular truncated tetrahedron with edge length equal to $\ell$.
It is proved e.g.~in~\cite{KM} that 
$$
\vol(\Delta_{\ell_g})=
v_8-3\int_0^{\frac{\pi}{3g}}
{\rm arccosh}\, \left(\frac{\cos t}{2\cos t-1}\right)\, {\rm d}t \ ,
$$
where $v_8$ is the volume of the regular ideal octahedron.

The following result is a restatement of the main theorem of~\cite{FrMo2}, and shows that, at least for every $\ell\leq \ell_2$, the 
regular tetrahedron $\Delta_\ell$ has the largest volume
among all fully truncated tetrahedra whose edge lengths are not smaller that $\ell$:

\begin{thm}\label{maxvol}
 Let $\ell\leq \ell_2$. Then
 $$
 V_\ell^3=\vol(\Delta_\ell)\ .
 $$
\end{thm}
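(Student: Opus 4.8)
The plan is to derive Theorem~\ref{maxvol} directly from the main theorem of~\cite{FrMo2}, which is itself a statement about the quantity $V_\ell^3$ of Definition~\ref{defvl}: it asserts that, for every $\ell\le\ell_2$, a fully truncated hyperbolic tetrahedron all of whose internal edges have length at least $\ell$ has volume at most $\vol(\Delta_\ell)$, with equality precisely when the tetrahedron is isometric to $\Delta_\ell$. Granting this, the identity $V_\ell^3=\vol(\Delta_\ell)$ is immediate: the right-hand side is an upper bound for every competitor in the supremum defining $V_\ell^3$, and it is attained, since $\Delta_\ell$ is itself such a competitor (all its internal edges have length exactly $\ell$). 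Thus the proof reduces to quoting~\cite{FrMo2} and checking that the family of tetrahedra considered there is the one appearing in Definition~\ref{defvl}; this is immediate, since in dimension $3$ we have restricted to compact fully truncated tetrahedra, with no ideal vertices and no ideal truncation triangles.

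It is worth recalling where the substance of~\cite{FrMo2} lies, since that is the genuinely difficult part. The set of isometry classes of non-degenerate fully truncated tetrahedra is a smooth manifold parametrised by the six dihedral angles $\theta_1,\ldots,\theta_6$ along the internal edges, the only constraint being that at each of the four ultraideal vertices the three incident dihedral angles be the angles of a hyperbolic triangle (equivalently, have sum strictly less than $\pi$). The volume is smooth on this region, and by Schl\"afli's differential formula $d\vol=-\tfrac12\sum_{i=1}^{6}L(e_i)\,d\theta_i$; since the $L(e_i)$ are positive, $\vol$ has no critical point in the interior, so the search for the maximum of $\vol$ over the region $\{L(e_i)\ge\ell\ \text{for all }i\}$ can be restricted to its boundary, where some $L(e_i)$ equals $\ell$. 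The whole problem is invariant under the action of $\mathfrak S_4$ permuting the four vertices, and $\Delta_\ell$ is the unique $\mathfrak S_4$-fixed point on that boundary; the analysis in~\cite{FrMo2} shows that the maximum is attained precisely there exactly when the common dihedral angle of $\Delta_\ell$ is at most $\pi/6$, that is, exactly when $\ell\le\ell_2$. For $\ell>\ell_2$ the maximiser is no longer regular, which is why the hypothesis $\ell\le\ell_2$ cannot be dropped by this method.

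The main obstacle is therefore located entirely in~\cite{FrMo2} — the constrained optimisation of the hyperbolic volume of a fully truncated tetrahedron — and here we use that estimate as a black box. On the side of the present paper the only point deserving a word is that the supremum in Definition~\ref{defvl} is a priori taken over a non-compact family (internal edges may be arbitrarily long), so that its being attained is part of the assertion; but this causes no difficulty, as~\cite{FrMo2} provides the uniform bound $\vol(\Delta^*)\le\vol(\Delta_\ell)$ over the whole family, and in any case in dimension $3$ the volume of a compact truncated tetrahedron never exceeds $v_8$. Finally, if one prefers to invoke~\cite{FrMo2} in a form normalised by the length of the shortest internal edge rather than by a lower bound valid for all edges, passing to the present formulation is a routine exercise using the monotonicity of $\ell\mapsto\vol(\Delta_\ell)$, which decreases from $v_8$ towards the volume of the regular ideal tetrahedron as $\ell$ ranges over $(0,+\infty)$.
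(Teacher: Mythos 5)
Your proposal is correct and matches the paper exactly: the paper gives no independent proof of Theorem~\ref{maxvol}, presenting it as a restatement of the main theorem of~\cite{FrMo2}, and your reduction (upper bound from~\cite{FrMo2}, attainment because $\Delta_\ell$ itself is a competitor in the supremum of Definition~\ref{defvl}) is precisely what that restatement amounts to. The additional discussion of Schl\"afli's formula and the $\mathfrak{S}_4$-symmetry correctly locates where the real work lies, but that work belongs to~\cite{FrMo2} and is treated as a black box both by you and by the paper.
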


The following result lists some known properties of manifolds belonging to $\calM_g$.
The last point implies that $\calM_g$ coincides with the set of the elements
of $\overline\calM_g$ of smallest volume.

\begin{prop}[\cite{FriMaPe,KM}]\label{Mg:prop}
Let $g\geq 2$. Then:
\begin{enumerate}
\item
the set $\calM_g$ is nonempty;
\item 
every element of $\calM_g$ admits a hyperbolic structure with geodesic boundary
(which is unique up to isometry by Mostow Rigidity Theorem);
\item
the boundary of every element of $\calM_g$ is connected, so $\calM_g\subseteq \overline\calM_g$; 
\item
if $M\in\calM_g$, then $M$ decomposes into the union of $g$ copies of $\Delta_{\ell_g}$, so
in particular $\vol(M)=g\vol(\Delta_{\ell_g})$;
\item
if $M\in\overline\calM_g$,
then $\vol(M)\geq g\vol(\Delta_{\ell_g})$;
\item
if $M\in\calM_g$, then
the shortest return length $\ell(M)$ of $M$ is equal to $\ell_g$;
\item
if $M\in\overline\calM_g$, then
$\ell(M)\geq \ell_g$.
\end{enumerate}
\end{prop}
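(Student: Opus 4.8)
The plan is to split the seven assertions into two bundles. Items (1)--(4) are consequences of the combinatorial--geometric analysis of ideal triangulations of $3$--manifolds with geodesic boundary carried out in~\cite{FriMaPe}, while items (5) and (7) are Miyamoto's sharp volume and return-length estimates~\cite{M} (see also~\cite{KM}), and item (6) will be obtained by combining (4) with (7).

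First I would pin down the combinatorics of a triangulation as in the definition of $\calM_g$. Suppose $M$ admits an ideal triangulation by $g$ tetrahedra with $\chi(M)=1-g$, let $(X,B)$ be the associated marked space, realised as a $\Delta$--complex with $g$ top-dimensional cells, $F$ triangular faces, $E$ edges and $b=|B|$ vertices. Every triangular face is glued to exactly one other, so $F=2g$, and hence $\chi(X)=b-E+2g-g=b-E+g$. On the other hand $X$ is obtained from $M$ by collapsing each of the $b$ boundary components to a point, so $\chi(X)=\chi(M)-\chi(\partial M)+b=(1-g)-(2-2g)+b=g-1+b$; comparing the two expressions forces $E=1$, so the triangulation has a single edge $e$. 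Now I would invoke~\cite{FriMaPe} for the inputs that genuinely use $3$--manifold topology: that such a triangulation is geometric, i.e.\ induces a decomposition of $M$ into geodesic truncated tetrahedra, that the boundary of $M$ is connected (item (3)), and that $\calM_g$ is nonempty for every $g\ge 2$ (item (1)). Granting geometricity, each of the $g$ truncated tetrahedra is mapped by a local isometry, and all six of its internal edges are identified to $e$; hence they all have the common length $\ell:=L(e)$, which forces each block to be the \emph{regular} truncated tetrahedron $\Delta_\ell$, with internal dihedral angle $\theta(\ell)$. The $6g$ tetrahedral edges surrounding the single edge $e$ have dihedral angles summing to $2\pi$, so $\theta(\ell)=\pi/(3g)$, which by definition of $\ell_g$ means $\ell=\ell_g$. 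This gives the decomposition of item (4), whence $\vol(M)=g\,\vol(\Delta_{\ell_g})$; the existence of the hyperbolic structure in item (2) follows (one checks that the angle condition along the triangulation of $\partial M$ by the equilateral truncation triangles is automatically satisfied once $\chi(\partial M)=2-2g$, again via~\cite{FriMaPe}), and its uniqueness is Mostow--Prasad rigidity.

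For items (5) and (7) I would simply appeal to Miyamoto's theorem~\cite{M}; this is the hard analytic input and I expect it to be the main obstacle, so I would treat it as a black box. Its proof decomposes $M\in\overline\calM_g$ along the Kojima canonical decomposition into fully truncated polyhedra~\cite{Kojima,Kojima2}, subdivides into fully truncated tetrahedra, and estimates the volume and the edge lengths of each piece against the regular model with the appropriate edge length, the extremal combinatorics (one edge, $g$ tetrahedra) pinning down the constants; the outcome is exactly $\vol(M)\ge g\,\vol(\Delta_{\ell_g})$ and $\ell(M)\ge\ell_g$, with equality precisely on $\calM_g$.

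Finally, for item (6): given $M\in\calM_g$, by item (4) it decomposes into $g$ copies of $\Delta_{\ell_g}$, and every internal edge of this decomposition is a geodesic arc of length $\ell_g$ which meets $\partial M$ orthogonally at each endpoint (the dihedral angle between an internal face and a truncation face is $\pi/2$) and whose two endpoints lie on distinct boundary hyperplanes of $\widetilde M$; such an arc is a return path, so $\ell(M)\le\ell_g$, and together with $\ell(M)\ge\ell_g$ from item (7) this yields $\ell(M)=\ell_g$. In summary, the only nonroutine ingredients are the geometricity and boundary-connectedness statements from~\cite{FriMaPe} and the sharp estimates of~\cite{M}; everything else is the Euler characteristic count and the elementary angle bookkeeping around the unique edge.
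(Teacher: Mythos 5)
Your proposal is correct and follows essentially the same route as the paper: the paper likewise delegates items (1)--(5) and (7) to \cite{FriMaPe,KM,M} (with (7) being \cite[Lemma 5.3]{M}) and obtains (6) by combining (7) with the observation that the internal edges of the decomposition in (4) are return paths of length $\ell_g$. Your additional Euler-characteristic count forcing a single edge and the angle bookkeeping $6g\,\theta(\ell)=2\pi$ giving $\theta(\ell)=\pi/(3g)$ are correct and merely make explicit what the cited references provide.
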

\begin{proof}
Items (2) and (3) and (4) are proved in~\cite{FriMaPe}, 
items~(1) and (5) in~\cite{KM,M}. Item~(7) is proved in~\cite[Lemma 5.3]{M}, and (6) follows from (7) together with the fact that
the internal egdes of the truncated tetrahedra in the decomposition described in~(4) define return paths of length $\ell_g$.
\end{proof}

 \subsection{Proof of Theorem~\ref{geodesic:boundary:thm} and Corollary~\ref{minimal:isv}}
 We are now ready to prove Theorem~\ref{geodesic:boundary:thm}. Let 
 $M\in\overline{\calM}_g$. By Proposition~\ref{Mg:prop} we have
 $\vol(M)=g\cdot \vol(\Delta_{\ell_g})$ and $\ell(M)=\ell_g$. Moreover, Theorem~\ref{maxvol} ensures that
 $V_{\ell_g}^3=\vol(\Delta_{\ell_g})$, so plugging these equalities into the lower bound given by Theorem~\ref{lowerbound:thm}
 we obtain
 $$
 \isv{M}\geq \frac{\vol(M)}{V_{\ell(M)}^3}=\frac{g\cdot \vol(\Delta_{\ell_g})}{\vol(\Delta_{\ell_g})}=g\ .
 $$
 On the other hand, the manifold $M$ admits an ideal triangulation with $g$ tetrahedra, so
 $$
 \isv{M}\leq c(M)\leq g\ .
 $$
 We thus get
 $$
 \isv{M}=c(M)=g\ ,
 $$
 which proves the first statement of Theorem~\ref{geodesic:boundary:thm}.
 
 If $M\in\overline{\calM}_g$, Proposition~\ref{Mg:prop} implies that $\vol(M)\geq g\cdot \vol(\Delta_{\ell_g})$ and $\ell(M)\geq \ell_g$, so
 that $V_{\ell(M)}^3\leq V_{\ell_g}^3=\vol(\Delta_g)$.
 Therefore, we have 
 $$
 \isv{M}\geq \frac{\vol(M)}{V_{\ell(M)}^3}\geq\frac{g\cdot \vol(\Delta_{\ell_g})}{\vol(\Delta_{\ell_g})}=g\ .
 $$
This concludes the proof of Theorem~\ref{geodesic:boundary:thm}.

In order to prove Corollary~\ref{minimal:isv}, it is sufficient to observe that, if $M$ is an oriented compact hyperbolic $3$-manifold with geodesic boundary having an ideal simplicial
volume not greater than $2$, then Corollary~\ref{cor:intro:lower} ensures that
$$
2\geq \isv{M}\geq \frac{\vol(M)}{v_8}\geq \vol(\partial M)\ ,
$$
where the last inequality is due to Miyamoto~\cite[Theorem 4.2]{M}. In particular, from Gauss-Bonnet Theorem we deduce that $\partial M$ is a connected surface of genus $2$,
so $\isv{M}\geq 2$ by Theorem~\ref{geodesic:boundary:thm}. We have thus proved that the elements of $\calM_{2}$ are exactly the compact hyperbolic $3$-manifolds
with geodesic boundary having the smallest possible ideal simplicial volume.

\subsection{An application to mapping degrees}\label{degrees:sub}
Take elements $M\in\calM_g$ and $M'\in\calM_{g'}$, where $g\geq g'$.
As stated in Corollary~\ref{degree:cor}, Theorems~\ref{bound:degree} and~\ref{geodesic:boundary:thm} imply that any map of pairs
$$
f\colon (M,\partial M)\to (M',\partial M')
$$
satisfies the inequality
\begin{equation}\label{boundgg}
\deg(f)\leq \frac{g}{g'}\ .
\end{equation}
The following proposition implies that there are cases where this bound is sharp:

\begin{prop}
Let $g'\geq 2$ and let $g=k\cdot g'$, $k\in\mathbb{N}\setminus \{0\}$, be a multiple of $g'$.
For every $M'\in \calM_{g'}$ there exist a manifold $M\in\calM_g$ and a map of pairs $f\colon (M,\partial M)\to (M',\partial M')$ such that
$$
\deg(f)=\frac{g}{g'}\ .
$$
\end{prop}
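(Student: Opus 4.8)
The plan is to construct $M$ as a cyclic branched cover of $M'$. Recall that $M'\in\calM_{g'}$ carries an ideal triangulation by $g'$ truncated tetrahedra, and since $\chi(M')=1-g'$ while such a triangulation has $2g'$ triangular faces and $g'$ tetrahedra, an Euler characteristic count forces it to have exactly one edge $e$; geometrically $e$ is a geodesic return path (of length $\ell_{g'}$) meeting $\bb M'$ orthogonally at its two endpoints. I would let $M$ be the $k$-fold cyclic cover of $M'$ branched over $e$, with $f\colon M\to M'$ the branched covering projection. Concretely, $M$ is obtained by unrolling the triangulation $k$ times around $e$: one takes $k$ disjoint copies of each of the $g'$ truncated tetrahedra --- so $kg'=g$ of them --- and glues them by lifting the face pairings of $M'$, advancing to the next sheet each time one goes once around $e$.

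The step requiring care is that this unrolling is a well-defined triangulated manifold. This reduces to the observation that the composition of the $6g'$ face-pairing germs met along one loop around $e$ is the identity --- precisely the condition making the quotient of the original $g'$ tetrahedra a manifold with $e$ an honest edge --- so that after $k$ loops the unrolled link circle of the new edge closes up. Hence $M$ is a manifold and $f$ is a genuine $k$-fold branched covering, totally ramified along $e$; in particular $\tilde e:=f^{-1}(e)$ is a single arc, so the induced ideal triangulation of $M$ has $g$ truncated tetrahedra and exactly one edge. Counting cells as before (the count is now independent of the number of boundary components) gives $\chi(M)=1-g$, so $M\in\calM_g$ by definition. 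Proposition~\ref{Mg:prop} then tells us $M$ is hyperbolic with connected geodesic boundary, and $\chi(\bb M)=2\chi(M)=2-2g$ shows $\bb M$ has genus $g$; this is consistent with Riemann--Hurwitz for the branched covering $\bb M\to\bb M'$, which is totally ramified over the two points $e\cap\bb M'$.

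It remains to compute $\deg(f)$. Since $e$ meets $\bb M'$ only at its endpoints we have $f^{-1}(\bb M')=\bb M$, so $f$ is a map of pairs $(M,\bb M)\to(M',\bb M')$, and it is orientation-preserving (a cyclic branched cover of an oriented manifold along a codimension-two submanifold is canonically oriented). Being $k$-to-one away from $\tilde e$, it satisfies $f_*[M,\bb M]=k\,[M',\bb M']$, whence $\deg(f)=k=g/g'$, as required.

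I expect the main obstacle to be the bookkeeping in the first two paragraphs: checking that lifting the face identifications ``with a cyclic shift around $e$'' genuinely yields a manifold --- equivalently, that $M$ is the branched cover one expects, with all truncation faces matching up to tile a single boundary surface --- and that the new triangulation has exactly one edge, so that $M$ lands in $\calM_g$ and not merely in $\overline\calM_h$ for some $h<g$. Given that, the degree computation and the map-of-pairs property are routine.
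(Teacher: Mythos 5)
Your construction is the same as the paper's: both proofs identify the unique edge $e'$ of the minimal ideal triangulation of $M'$ (the paper quotes the one-edge characterization of $\calM_{g'}$ from \cite{FriMaPe} rather than running the Euler characteristic count, but your count is correct) and take $M$ to be the $k$-fold cyclic cover of $M'$ ramified along $e'$, so that the triangulation lifts to one with $kg'=g$ tetrahedra and a single edge and $\deg f = k$.

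The one point where you diverge is the existence of this ramified cover, which is also the step you yourself flag as delicate. You build the cover combinatorially by ``unrolling'' around $e'$; the paper instead removes an open neighbourhood of $e'$ to exhibit $M'\setminus e'$ as a genus-$(g'+1)$ handlebody, so that the meridian of $e'$ survives in its (free) fundamental group and the order-$k$ cyclic ramified covering with ramification locus $e'$ can be taken. Be aware that your unrolling argument, as stated, does not quite close the gap: ``advancing one sheet per loop around $e$'' requires assigning sheet shifts in $\mathbb{Z}/k$ to the $2g'$ face pairings whose signed sum along the edge cycle equals a generator of $\mathbb{Z}/k$, and since each face pairing occurs three times in that cycle (there are $6g'$ crossings but only $2g'$ face classes), this is not a formal consequence of the holonomy around $e'$ being trivial --- it is equivalent to the existence of a homomorphism $\pi_1(M'\setminus e')\to\mathbb{Z}/k$ sending the meridian to a generator, which is exactly what the paper's handlebody observation is meant to supply. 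So the two arguments are the same in substance, but the global consistency of the sheet assignment is the content of the existence step, and it is cleaner to discharge it via the structure of $M'\setminus N(e')$ as the paper does than to assert that the unrolled link circle closes up.
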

 \begin{proof}
 As proved e.g.~in~\cite{FriMaPe}, manifolds in $\calM_{g'}$ can be characterized as those orientable manifolds that admit an ideal triangulation with $g'$ tetrahedra and
 only one edge,
 meaning that all the edges of the truncated tetrahedra of the triangulation are identified to each other in the manifold.
 
 Let us denote by $e'$ the unique edge of the triangulation of $M'$. Then by removing from $M'$ a small open neighbourhood of $e'$ one gets a genus--$(g'+1)$
 handlebody. In particular, a small loop encircling $e'$ has infinite order in $\pi_1(M'\setminus e')$, so for every $k\geq 1$ we can consider the cyclic ramified covering
 $f\colon M\to M'$ having order $k$ and ramification locus equal to $e'$. By construction, the ideal triangulation of $M'$ lifts to an ideal triangulation of $M$ having $g=k\cdot g'$
 tetrahedra and only one edge $e=f^{-1}(e')$. Thus $M$ belongs to $\calM_g$. Since the map $f$ has topological degree equal to $k$,  this concludes the proof.
 \end{proof}

 One could bound the degrees of maps between manifolds with boundary also by looking at the ordinary simplicial volume both of the boundaries and of the doubles of the manifolds
 involved. Indeed, if $f\colon (M,\partial M)\to (M',\partial M')$ is a map of pairs between oriented manifolds with boundary of the same dimension, then 
 $f$ restricts to a map $g\colon \partial M\to\partial M'$ and extends to a map $F\colon DM\to DM'$ between the double $DM$ of $M$ and the double $DM'$ of $M'$.
 Moreover,
 $$
 \deg (F)=\deg (g)=\deg (f)\ .
 $$
 Therefore, by exploiting the usual bounds of mapping degrees in terms of the ordinary simplicial volume, one gets
 \begin{equation}\label{double:est}
 \deg (f)=\deg (F)\leq \frac{\|DM\|}{\|DM'\|}\ ,
 \end{equation}
  \begin{equation}\label{boundary:est}
 \deg (f)=\deg (g)\leq \frac{\|\partial M\|}{\|\partial M'\|}\ .
 \end{equation}
 Let us show that, at least when $M\in\calM_g$ and $M'\in\calM_{g'}$, these bounds are less effective that the bound~\eqref{boundgg} obtained by exploiting
 the ideal simplicial volume. Indeed, in this case the inequality~\eqref{double:est} ensures that, for every map of pairs
 $f\colon (M,\partial M)\to (M',\partial M')$, one has
 $$
 \deg(f)\leq \frac{\|DM\|}{\|DM'\|}=\frac{\vol(DM)}{\vol(DM')}=\frac{\vol(M)}{\vol(M')}=\frac{g\vol(\Delta_{\ell_g})}{g'\vol(\Delta_{\ell_{g'}})}\ ,
 $$
 and the right-hand side of this inequality is strictly bigger than $g/g'$, since $g > g'$ implies $\ell_g<\ell_{g'}$ and $\vol(\Delta_{\ell_g})>\vol(\Delta_{\ell_{g'}})$. 
 For example, if $g'=2$ and $g=2k$ is very big, then $\vol(\Delta_{\ell_{g'}})\approx 3.226$, while $\vol(\Delta_{\ell_{g}})\approx v_8=3.664$, so the integral part
 of  $(g\vol(\Delta_{\ell_g}))/(g'\vol(\Delta_{\ell_{g'}}))\approx 1.135 (g/g')$ is strictly bigger than the (sharp) bound $g/g'$.

 On the other hand, in this case the inequality~\eqref{boundary:est} gives
 $$
 \deg(f)\leq \frac{\|\partial M\|}{\|\partial M'\|}=\frac{|\chi(\partial M)|}{|\chi(\partial M')|}=\frac{g-1}{g'-1}\ ,
 $$
 and again the right hand side of this inequality is strictly bigger than $g/g'$. For example, if $g'=2$ this bound gives $\deg(f)\leq g-1$,
 which is a much less restrictive condition than the inequality $\deg(f)\leq g/2$ provided by the study of ideal simplicial volume. 
 
 \bibliographystyle{amsalpha}
 \bibliography{biblionote}
\end{document}